\newtheorem{thm}{Theorem}[section]
\newtheorem{prop}[thm]{Proposition}
\newtheorem{cor}[thm]{Corollary}
\newtheorem{defin}[thm]{Definition}
\newtheorem{rem}[thm]{Remark}
\newtheorem{exam}[thm]{Example}
\newtheorem{question}[thm]{Question}
\newcommand{\R}{{\mathbb{R}}}
\newcommand{\N}{{\mathbb{N}}}
\newcommand{\cA}{{\mathcal{A}}}
\newcommand{\cD}{{\mathcal{D}}}
\newcommand{\cC}{{\mathcal{C}}}
\newcommand{\cF}{{\mathcal{F}}}
\newcommand{\cG}{{\mathcal{G}}}
\newcommand{\cL}{{\mathcal{L}}}
\newcommand{\cN}{{\mathcal{N}}}
\newcommand{\cP}{{\mathcal{P}}}
\newcommand{\cS}{{\mathcal{S}}}
\newcommand{\cU}{{\mathcal{U}}}
\newcommand{\cV}{{\mathcal{V}}}
\newcommand{\cW}{{\mathcal{W}}}
\def\id{{1\hskip-2.5pt{\rm l}}}
\newcommand{\Ham}{{\hbox{\it Ham\,}}}
\newcommand{\f}{{\vec{f}}}
\newcommand{\g}{{\vec{g}}}
\begin{document}

\title{Symplectic geometry of quantum noise}

\renewcommand{\thefootnote}{\alph{footnote}}

\author{\textsc Leonid
Polterovich }

\footnotetext[1]{ Partially supported by the National Science
Foundation grant DMS-1006610, the Israel Science Foundation grants 509/07, 178/13
and the European Research Council Advanced grant 338809.}

\date{November, 2013}

\maketitle

\medskip
\noindent
\begin{abstract}
\noindent We discuss a quantum counterpart, in the sense of the Berezin-Toeplitz quantization, of certain constraints on Poisson brackets coming from ``hard" symplectic geometry. It turns out that they can be interpreted in terms of the quantum noise of observables and their joint measurements in operational quantum mechanics. Our findings include various geometric mechanisms of quantum noise production and
a noise-localization uncertainty relation. The methods involve Floer theory and Poisson bracket invariants
originated in function theory on symplectic manifolds.
\end{abstract}

\vfill\eject

\tableofcontents

\vfill\eject

\section{Introduction}

The main theme of the present paper is an interaction between ``hard" symplectic geometry
and operational quantum mechanics, where the role of observables
is played by positive operator valued measures (POVMs) \cite{Busch}. We focus on POVMs coming from partitions of unity of a classical phase space, that is a closed symplectic manifold, under the Berezin-Toeplitz quantization. Such POVMs, considered earlier in \cite{P}, model a {\it registration procedure}, a statistical procedure which can be considered as an attempt to localize the system in the phase space and which is closely
related to approximate quantum measurements. Certain constraints on Poisson brackets coming from symplectic
geometry can be  interpreted in terms of the quantum noise of these observables and their joint measurements.
Our findings include various geometric mechanisms of quantum noise production and a noise-localization uncertainty relation. The methods involve Floer theory and Poisson bracket invariants
originated in function theory on symplectic manifolds.

\subsection{Registration procedure}\label{subsec-reg-intro}
For an open cover $\cU=\{U_1,...,U_L\}$ of a classical phase space $M$,
 the registration procedure yields an answer to the question {\it `Where (i.e. in which set $U_i$) is the system located?'} The ambiguity arising due to overlaps between the sets of the cover is resolved
with the help of a partition of unity $f_1,...,f_L$ subordinated to $\cU$: every point $z \in M$
is registered in exactly one of the subsets $U_i$ of the cover containing this point with probability
$f_i(z)$.

The {\it Berezin-Toeplitz quantization} (which exists for all closed symplectic manifolds
 whose symplectic form represents, up to a multiple $2\pi$, an integral cohomology class) is given by a seq\-uence of finite-dimensio\-nal complex Hilbert spaces $H_m$, $m \to \infty$ of increasing dimension and a family of  $\R$-linear maps $T_m: C^{\infty}(M) \to \cL(H_m)$ satisfying a number of axioms (most notably, the correspondence principle) which will be recalled later in Section \ref{sec-BT}. Here $\cL(H_m)$ stands for the space of Hermitian operators on $H_m$.
The number $\hbar = \frac{1}{m}$ represents the Planck constant, so that
$m \to \infty$ is the classical limit. With this language, the quantum version of the registration procedure is defined by means of the sequence of POVMs
\begin{equation}\label{eq-Am-intro}
A^{(m)}=\{T_m(f_i)\}
\end{equation}
on the finite space $\Omega_L=\{1,...,L\}$.
Being  prepared in a pure state
$[\xi] \in \mathbb{P} (H_m)$, $|\xi|=1$, the quantum system is registered in the set $U_i$ with probability
$\langle T_m(f_i) \xi,\xi\rangle$.

\subsection{Inherent noise}
The main character of our story is {\it the inherent noise} of POVMs coming from quantum registration procedures. This quantity, roughly speaking, measures the size of the non-random component of the quantum noise operator. The latter was studied earlier in \cite{BHL1,Ozawa,BHL2,Massar}. Let $B$ be an $\cL(H)$-valued POVM on a space $\Theta$. We say that a POVM
$A$ on $\Omega_L$ is a smearing (or randomization) of $B$ if there exists a measurable partition of unity
$\{\gamma_j\}$ on $\Theta$ such that $A_j = \int_\Theta \gamma_j dB$ for all $j$. This equality can be
interpreted as follows: every point $\theta \in \Theta$ diffuses into a point $j \in \Omega_L$
with probability $\gamma_j(\theta)$. To every random variable $x=(x_1,...,x_L)$ on $\Omega_L$
corresponds a random variable $\Gamma x = \sum_j x_j\gamma_j$ on $\Theta$
which has the same operator valued expectation as $x$ but whose operator valued variance
$$\Delta_B(\Gamma x):=  \int_\Theta (\Gamma x)^2\;dB - \Big{(}\int_\Theta \Gamma x \; dB\Big{)}^2$$
does not exceed the one of $x$ (see inequality \eqref{eq-MdeM} below).
{\it Is it possible to ``derandomize" $A$ in such a way that this
variance is small for all $x \in [-1,1]^L$?} The obstruction to such a derandomization  is given by the inherent noise of $A$ which is defined as
$$\cN_{in}(A) = \inf_{B} \max_{x \in [-1,1]^L}||\Delta_B(\Gamma x)||_{op}\;,$$
where the infimum is taken over all $B$ such that $A$ is a smearing of $B$. Here $||\;||_{op}$
stands for the operator norm.

A key feature of the inherent noise is as follows (see Section \ref{subsec-noise-indicator-single} below):
Let $A^{(m)}$ be the sequence of POVMs \eqref{eq-Am-intro}
associated with the quantum registration procedure.  Then there exists a constant $D>0$
such that for all $m \in \N$
\begin{equation}\label{eq-noise-upper-intro}
\cN_{in}(A^{(m)}) \leq D \cdot \hbar\;,
\end{equation}
where $\hbar = 1/m$.
The main finding of the present paper is that for certain classes of covers the inherent noise
 satisfies lower bounds of the form
$\cN_{in}(A^{(m)}) \geq C \cdot \hbar$ for all sufficiently large $m$, where the constant
 $C$ depends only on the symplectic geometry and combinatorics of the covers.
Our results can be briefly summarized as follows.

\subsection{Noise-localization uncertainty relation}
First, we discuss the principle stating that {\it `a sufficiently fine phase space localization yields the inherent quantum noise'}. Its qualitative version has been established in \cite{P}.
We present a quantitative version of this principle for the special class of open covers $\cU=\{U_1,...,U_L\}$ of $M$ satisfying assumptions (R1) and (R2) below. Let us write $\overline{U}$ for the closure of a subset $U \subset M$.

\medskip
\noindent {\bf R1.} Every subset $\overline{U}_j$ intersects closures of at most $d$ other subsets from the cover.

\medskip
\noindent Further, for a subset $X \subset M$
define its {\it star} $St(X)$ as the union of all $U_i$'s with $\overline{U}_i \cap \overline{X} \neq \emptyset$.

\medskip
\noindent {\bf R2.} For every $i$ there exists a time-dependent Hamiltonian function $F^{(i)}_t$ on $M$, $t \in [0;1]$ supported in the
$p$ times iterated star $St(...(St(U_i)...)$ of $U_i$ such that the time one map $\phi_i$ of the corresponding
Hamiltonian flow displaces $U_i$: $\phi_i(U_i) \cap \overline{U}_i =\emptyset$.

\begin{figure}[tbp]
\centering
\epsfig{file=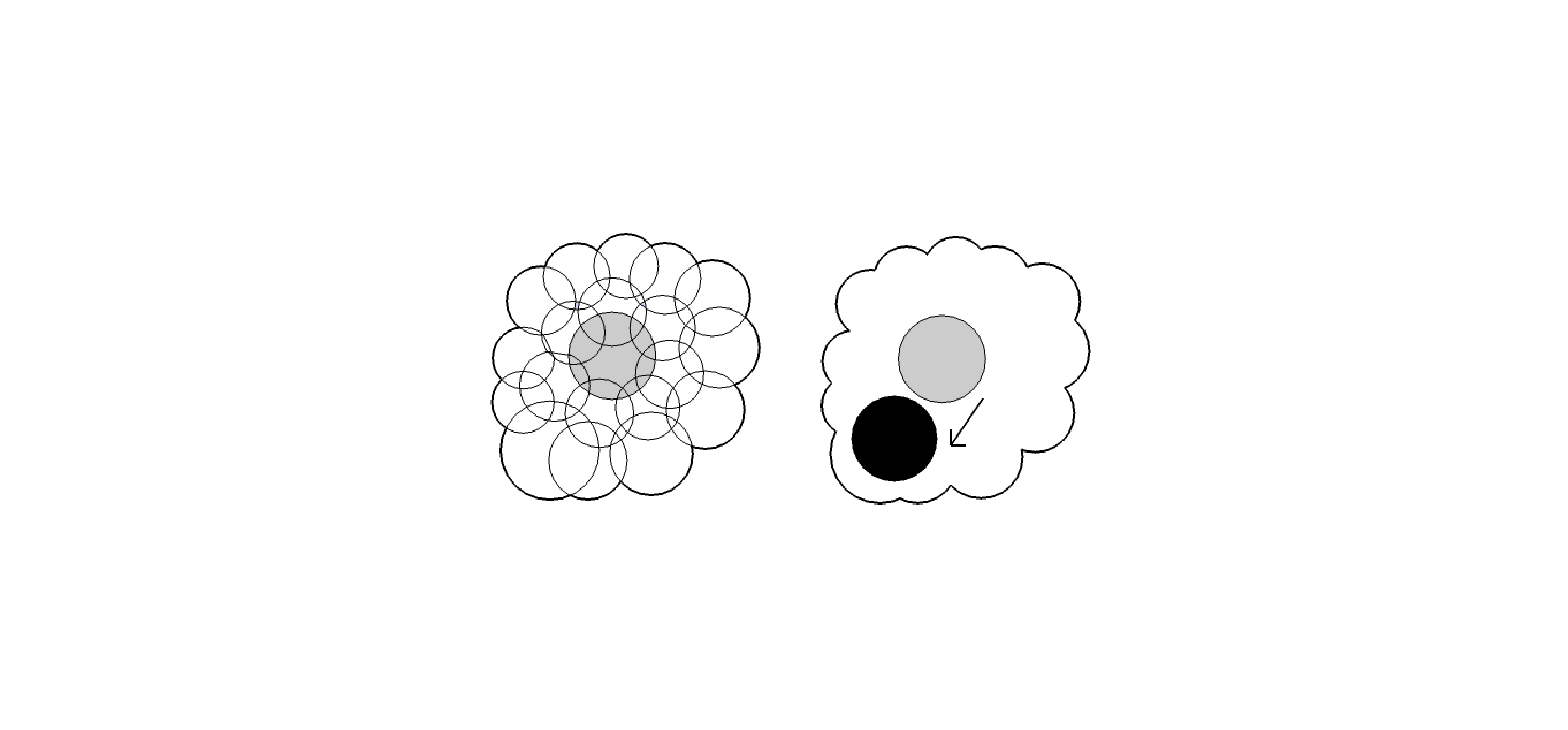,width=1.2\linewidth,clip=}
\caption{Displacement in two times iterated star}
\label{fig-star}
\end{figure}

\medskip
\noindent
Assumption (R2) is illustrated in Figure \ref{fig-star}. On the left one sees  $p=2$ times
iterated star of the gray disc $U_i$, and on the right -- its image (in black) under a Hamiltonian
diffeomorphism $\phi_i$ represented by an arrow.

\medskip
\noindent We call the covers satisfying (R1) and (R2) {\it $(d,p)$-regular}. We say that the {\it the magnitude of localization} of a $(d,p)$-regular cover $\cU$ is $\leq \cA$ if
$$\int_0^1 \max_M F^{(i)}_t - \min_M  F^{(i)}_t \; dt \leq \cA \;\;\; \forall i=1,...,L\;.$$
 In other words, we measure the size of subsets $U_i$ in terms of their {\it displacement energy} (cf. \cite{Hofer,Pbook}), that is of minimal amount of energy required in order to displace $U_i$ inside its $p$ times iterated star.
Let us mention that for certain $d,p$ depending only on $(M,\omega)$ one has the following: for every sufficiently small $\cA>0$ there exists a $(d,p)$-regular cover with magnitude of localization $\leq \cA$,
see Example \ref{exam-dpregular-greedy} below.

\medskip

Given a $(d,p)$-regular cover $\cU$, consider the sequence of POVMs $A^{(m)}$ given
by \eqref{eq-Am-intro}. Our first result is the following {\it noise-localization uncertainty relation:}

\begin{thm}\label{thm-intro-1}
\begin{equation}\label{noise-localization-single-intro}
\cN_{in}(A^{(m)}) \cdot \cA \geq C \hbar\;\;\; \forall m \geq m_0\;,
\end{equation}
where the positive constant $C$ depends only on $d$ and $p$.
\end{thm}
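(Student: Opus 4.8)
The plan is to bound the inherent noise $\cN_{in}(A^{(m)})$ from below by relating it to a Poisson bracket invariant of the partition of unity $f_1,\dots,f_L$, and then to bound that invariant from below using the displacement hypothesis (R2) together with Floer-theoretic estimates on the fragmentation/commutator norm. Concretely, I expect the earlier sections to supply an inequality of the shape $\cN_{in}(A^{(m)}) \geq c\,\hbar\cdot\mathsf{pb}(\vec f) - o(\hbar)$, where $\mathsf{pb}(\vec f)$ is (a variant of) the Poisson bracket invariant $\inf_{\vec x,\vec y\in[-1,1]^L}\max_M |\{\sum x_i f_i,\sum y_j f_j\}|^{-1}$ or its reciprocal; the point is that the quantum noise operator of a smearing, after applying the correspondence principle $\|[T_m(a),T_m(b)]\|_{op}=\hbar\|\{a,b\}\|+o(\hbar)$ and $T_m(a)T_m(b)=T_m(ab)+O(\hbar)$, is governed in leading order in $\hbar$ by Poisson brackets of the smeared functions $\Gamma x = \sum x_j\gamma_j$. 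Thus the first step is: fix any $B$ with $A^{(m)}$ a smearing of $B$, extract from $B$ (via the quantization axioms and a compactness/limit argument over $m$) classical data — a subordinated object on $\Theta$ — and show $\max_x\|\Delta_B(\Gamma x)\|_{op}$ is at least $\hbar$ times a classical Poisson-bracket quantity associated to $\vec f$, up to lower order terms.

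The second, and central, step is the purely symplectic lower bound: for a $(d,p)$-regular cover, the associated Poisson bracket invariant of any subordinated partition of unity is bounded below by $C/\cA$, with $C=C(d,p)$. Here is where (R1) and (R2) enter. Using (R2), each $U_i$ is displaced inside its $p$-fold star by a Hamiltonian flow whose energy is $\le \cA$; using (R1), each point of $M$ lies in a uniformly bounded number ($\le$ some $N(d,p)$) of these $p$-fold stars, so the functions $f_i$ can be grouped into a bounded number $N$ of "colour classes," within each of which the supports (enlarged to the relevant stars) are pairwise disjoint. This is the standard device that reduces estimating $\|\{\sum x_i f_i,\sum y_j f_j\}\|$ to estimating displaceability-type quantities for a controlled number of pieces. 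I would then invoke the Poisson bracket / displacement energy inequality from "hard" symplectic geometry — the Entov–Polterovich–Zapolsky type bound, proved via partial symplectic quasi-states or spectral invariants in Floer theory — which says that for a partition of unity subordinated to a cover all of whose members are displaceable with energy $\le E$, the relevant $\mathsf{pb}$-invariant is $\ge \mathrm{const}/E$. The iterated-star bookkeeping ensures $E$ is comparable to $\cA$ with a constant depending only on $d,p$.

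Assembling: combine the lower bound $\cN_{in}(A^{(m)})\ge c\,\hbar\cdot\mathsf{pb}(\vec f)-o(\hbar)$ from Step 1 with $\mathsf{pb}(\vec f)\ge C'(d,p)/\cA$ from Step 2 to get $\cN_{in}(A^{(m)})\cdot\cA \ge cC'(d,p)\,\hbar - o(\hbar)$, and then absorb the $o(\hbar)$ term by passing to $m\ge m_0$ and halving the constant, which yields \eqref{noise-localization-single-intro}. The main obstacle I anticipate is Step 1 — making rigorous the passage from an \emph{arbitrary} POVM $B$ on an \emph{arbitrary} space $\Theta$ (of which $A^{(m)}$ is a smearing) to a classical Poisson-bracket lower bound: one must show that the operator-valued variance $\Delta_B(\Gamma x)$ cannot be made small simultaneously for all $x$, and the natural route is to show that smallness of all these variances would force the operators $A_j^{(m)}=T_m(f_j)$ to "almost commute after functional calculus" in a way incompatible, via the correspondence principle, with the classical non-commutativity measured by $\mathsf{pb}(\vec f)$; handling the supremum over \emph{all} derandomizations $B$ uniformly, and controlling the error terms in the quantization uniformly in $m$, is the delicate part. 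I expect this to be precisely the content of the "noise indicator" machinery referenced as Section \ref{subsec-noise-indicator-single}, so in the write-up I would cite that and focus the new work on the symplectic estimate of Step 2.
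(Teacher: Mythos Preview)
Your proposal is correct and follows essentially the paper's approach: Step~1 is precisely the unsharpness principle $\cN_{in}(A)\ge\tfrac12\nu_q(A)$ (a purely operator-theoretic fact, via Janssens' inequality $\|\Delta_B(\Gamma x)\|_{op}^{1/2}\|\Delta_B(\Gamma y)\|_{op}^{1/2}\ge\tfrac12\|[A(x),A(y)]\|_{op}$, valid for \emph{any} smearing $B$) combined with the correspondence principle to convert $\nu_q(A^{(m)})$ into $\hbar\,\nu_c(\f)+O(\hbar^2)$, and Step~2 is exactly the coloring-plus-quasi-state argument you outline. Your anticipated ``main obstacle'' in Step~1 therefore dissolves---no compactness or limit argument over derandomizations is needed, since Janssens' lemma handles arbitrary $B$ in one stroke.
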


\medskip
\noindent
Let us note that number $m_0$ depends on the full data including the Berezin-Toeplitz quantization $T$ and the partition of unity $\{f_j\}$. We refer the reader to Section \ref{subsec-noise-local} below for the proof of
a more general version of this result and further discussion.

From a purely symplectic perspective, the noise-localization uncertainty relation is a manifestation
of rigidity of symplectic covers. Given a finite open cover $\cU=\{U_1,...,U_L\}$,
we introduce the associated Poisson bracket invariant
$$pb(\cU) = \inf_{\f} \max_{x,y\in [-1,1]^L}||\{\sum x_i f_i, \sum y_j f_j\}||\;,$$
where the infimum is taken over all partitions of unity $\f$ subordinated to
$\cU$ formed by smooth functions $f_1,...,f_L$.
Here  $\{.,.\}$ stands for the Poisson bracket and $||g||=\max_M |g|$.
This invariant serves as an obstruction to existence of a Poisson commutative partition
of unity subordinated to $\cU$. The study of lower bounds on $pb$ and related invariants in terms of geometry of covers was initiated in \cite{EPZ} and has been continued in the PhD-thesis of Frol Zapolsky as well as in \cite{P}. None of currently known bounds are sharp. For the applications to quantum mechanics we need the bounds which provide the sharp asymptotics in terms of the magnitude of localization of the cover.
Such bounds will be given in Section \ref{sec-mainresults} below.
Their proof involves Floer-theoretical methods of function theory on symplectic manifolds.
For instance, Theorem \ref{thm-intro-1} will follow from the estimate $pb(\cU) \geq C(p,d)\cA^{-1}$
for every $(d,p)$-regular cover.

Let us mention also that, in a different context, entropic uncertainty of quantized partitions of unity associated to covers of the cotangent bundles appeared in the seminal work by Anantharaman and Nonnenmacher \cite{AN}.

\subsection{Overlap induced noise}
Second, we present a different mechanism of the inherent quantum noise production based on geometry
of overlaps. Given a finite open cover $\cU= \{U_1,...,U_L\}$ of $M$, consider any decomposition of the set $\{1,...,L\}$ into disjoint union $I \sqcup I^c$ (upper index $c$ stands for the complement) and define {\it an overlap layer} by $\Lambda(\cU,I):= \bigcup_{\alpha \in I, \beta \in I^c} (U_\alpha \cap U_\beta)$.
Observe that $M \setminus \Lambda(\cU,I) = U(I) \sqcup U(I^c)$ where
$$U(I):=\Big{(}\bigcup_{\alpha \in I} U_\alpha\;\Big{)^c}\;.$$
We illustrate
these notions on Fig. \ref{fig-overlap-layer}: Here the cover consists of discs. The discs $U_\alpha$, $\alpha \in I$ form the
left column, the discs $U_\beta$, $\beta \in I^c$ form the right column, and the corresponding overlap layer
is colored in black. The sets $U(I)$ and $U(I^c)$ appear in gray (on the right) and in white (on the left)
respectively.

\begin{figure}[tbp]
\centering
\epsfig{file=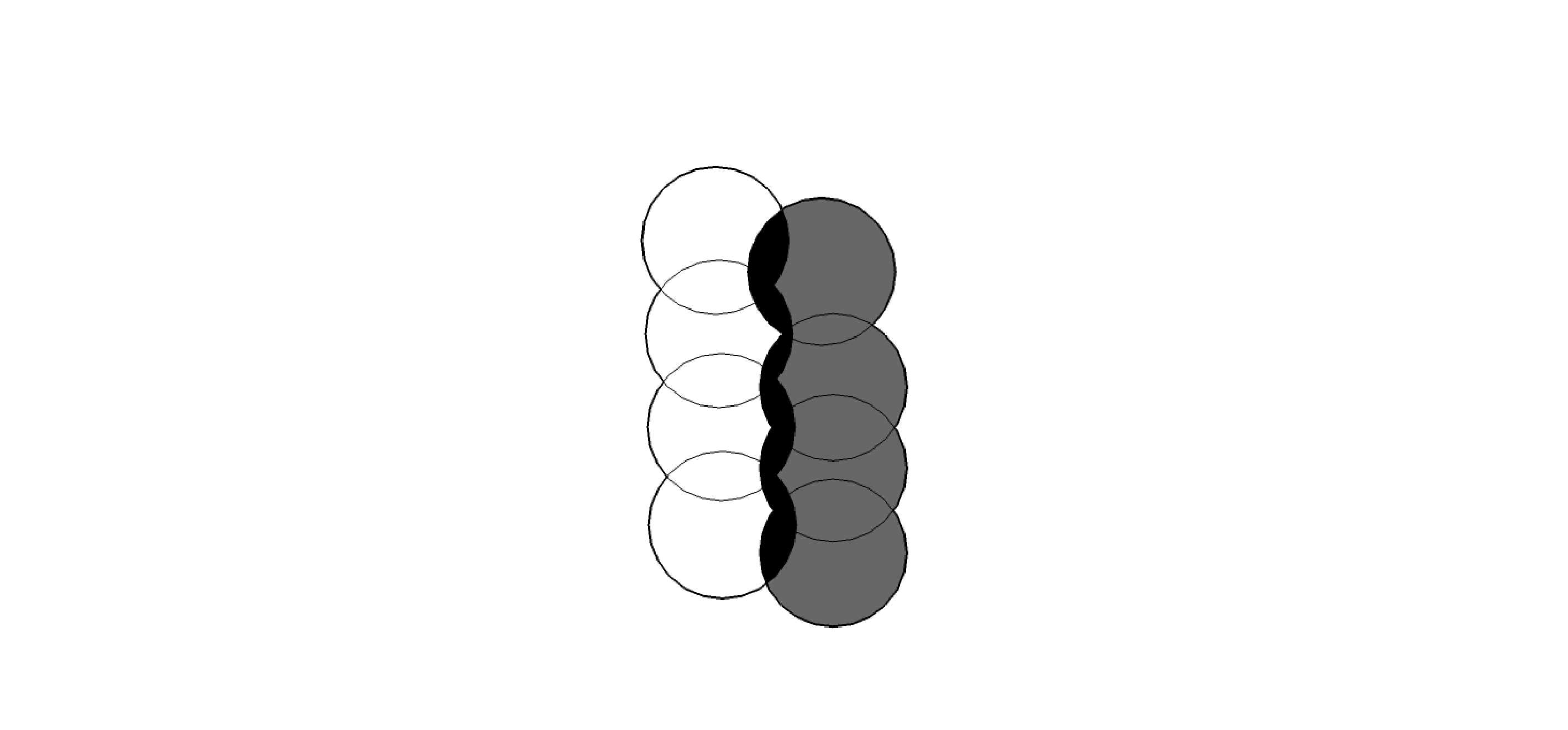,width=0.9\linewidth,clip=}
\caption{An overlap layer}
\label{fig-overlap-layer}
\end{figure}

Interestingly enough, the inherent quantum noise of a quantum registration procedure can be estimated in terms of quite subtle relative symplectic geometry of overlap layers of the corresponding cover . The main tool is the Poisson bracket invariant $pb_4$ introduced and studied in a recent paper \cite{BEP} by means of Floer theory: Let $X_0,X_1,Y_0,Y_1$ be a quadruple of compact subsets of a symplectic manifold $(M,\omega)$ satisfying
an intersection condition
$$X_0 \cap X_1 = Y_0 \cap Y_1 =\emptyset\;.$$
One of the equivalent definitions of $pb_4$ is as follows, see
\cite[Proposition 1.3]{BEP}:
$$pb_4(X_0,X_1,Y_0,Y_1)= \inf ||\{f,g\}||\;,
$$
where the infimum is taken over all pairs of functions $f,g: M \to [0;1]$ with
$f=0$ near $X_0$, $f=1$ near $X_1$, $g=0$ near $Y_0$ and $g=1$ near $Y_1$.

\medskip
\noindent
Given an open cover $\cU$ of $M$, consider the sequence of POVMs $A^{(m)}$ given by \eqref{eq-Am-intro}.

\medskip
\noindent
\begin{thm}\label{thm-intro-2} For every $I,J \subset \Omega_L$
\begin{equation}\label{eq-mu-pb4-overlap-intro}
\cN_{in}(A^{(m)}) \geq 2pb_4(U(I),U(I^c),U(J),U(J^c)) \cdot \hbar \;
\end{equation}
for all sufficiently large $m$.
\end{thm}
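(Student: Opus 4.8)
The plan is to probe the infimum defining $\cN_{in}(A^{(m)})$ with two carefully chosen random variables, bound their operator-valued variances from below by a single commutator of Toeplitz operators, and then invoke the correspondence principle to identify that commutator with the bracket invariant $pb_4$. So I would fix $I,J\subset\Omega_L$ and set $g_I=\sum_{i\in I}f_i$, $g_J=\sum_{j\in J}f_j$, smooth functions $M\to[0,1]$. Since $\{f_i\}$ is subordinated to $\cU$, one checks directly that $g_I$ vanishes on a neighborhood of $U(I)=\big(\bigcup_{i\in I}U_i\big)^c$ and equals $1$ on a neighborhood of $U(I^c)$ (there $\sum_{i\notin I}f_i\equiv 0$), and similarly for $g_J$; moreover $U(I)\cap U(I^c)=\big(\bigcup_i U_i\big)^c=\emptyset$ and likewise for $J$, so the quadruple $(U(I),U(I^c),U(J),U(J^c))$ is admissible for $pb_4$ and $\|\{g_I,g_J\}\|\geq pb_4(U(I),U(I^c),U(J),U(J^c))=:\beta$ for this particular partition of unity.

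Next, let $B$ be any $\cL(H_m)$-valued POVM on a space $\Theta$ of which $A^{(m)}$ is a smearing, via $\{\gamma_j\}$ with $T_m(f_j)=\int_\Theta\gamma_j\,dB$. I would test with $x^I\in\{-1,1\}^L\subset[-1,1]^L$ having entries $+1$ on $I$ and $-1$ on $I^c$, and with the analogous $x^J$; then $u:=\Gamma x^I$ and $v:=\Gamma x^J$ satisfy, by $\R$-linearity of $T_m$ and $T_m(1)=\id$,
\[
P:=\int_\Theta u\,dB=2T_m(g_I)-\id,\qquad Q:=\int_\Theta v\,dB=2T_m(g_J)-\id,
\]
so $[P,Q]=4\,[T_m(g_I),T_m(g_J)]$. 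The core step is the inequality, valid for \emph{any} POVM $B$ and bounded $u,v$, that $\big\|[P,Q]\big\|_{op}\leq 2\sqrt{\|\Delta_B(u)\|_{op}\,\|\Delta_B(v)\|_{op}}$. I would prove it via a Naimark dilation $B(\cdot)=\Pi E(\cdot)\Pi$ with $E$ a \emph{projection}-valued measure on $\Theta$ and $\Pi$ the projection onto $H_m$: the self-adjoint operators $\hP=\int u\,dE$ and $\hQ=\int v\,dE$ commute, being functions of one and the same PVM, and a two-line computation yields $\Delta_B(u)=C_u^*C_u$, $\Delta_B(v)=C_v^*C_v$ with $C_u=(\id-\Pi)\hP\Pi$, $C_v=(\id-\Pi)\hQ\Pi$, together with $[P,Q]=\Pi[\hP,\hQ]\Pi+C_v^*C_u-C_u^*C_v=C_v^*C_u-C_u^*C_v$; the bound then follows from $\|C_u\|^2=\|\Delta_B(u)\|_{op}$ and $\|C_v\|^2=\|\Delta_B(v)\|_{op}$.

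Now I would feed in the correspondence principle. Its bracket part, $\|\tfrac{i}{\hbar}[T_m(g_I),T_m(g_J)]-T_m(\{g_I,g_J\})\|\to 0$, and its norm part, $\|T_m(\{g_I,g_J\})\|\to\|\{g_I,g_J\}\|\geq\beta$, give $\|[P,Q]\|_{op}=4\|[T_m(g_I),T_m(g_J)]\|_{op}\geq 4\hbar(\beta-o(1))$, with the $o(1)$ depending only on $m$, not on $B$. Hence, using $\sqrt{ab}\leq\max(a,b)$,
\[
\max_{x\in[-1,1]^L}\|\Delta_B(\Gamma x)\|_{op}\ \geq\ \max\big(\|\Delta_B(u)\|_{op},\,\|\Delta_B(v)\|_{op}\big)\ \geq\ \tfrac12\|[P,Q]\|_{op}\ \geq\ 2\hbar(\beta-o(1)),
\]
and taking the infimum over all admissible $B$ yields $\cN_{in}(A^{(m)})\geq 2\beta\hbar$ for all sufficiently large $m$ (more precisely, $\liminf_m \hbar^{-1}\cN_{in}(A^{(m)})\geq 2\beta$).

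I expect the main obstacle to be the commutator--variance inequality, and within it the realization that the Naimark dilation may be chosen projection-valued, so that $\hP$ and $\hQ$ commute and the entire commutator $[P,Q]$ is produced by the compression defects $C_u,C_v$ whose norms are exactly the square roots of the variances. Once this is in place, checking that $g_I,g_J$ enter the competition defining $pb_4$ and quoting the two halves of the correspondence principle are routine, and the same template should handle the more general version of the statement.
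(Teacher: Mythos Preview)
Your argument is correct and is essentially the paper's own proof unwound: the commutator--variance inequality you establish via Naimark dilation is exactly Janssens' lemma (cited in the paper as \eqref{eq-janssens-0}), the choice $x^I,x^J$ and the identification of $g_I,g_J$ as $pb_4$-admissible is the content of Theorem~\ref{thm-overlap-layer-pb4} and Proposition~\ref{thm-pb4-1}, and the passage through the correspondence principle is Theorem~\ref{thm-noisebound}. The paper merely packages these same three steps into the intermediate invariants $\nu_q$, $\nu_c$, $pb$, and $\mu$, whereas you compose them directly.
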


\medskip
\noindent
In Section \ref{subsec-overlap-concusion} below we shall prove a more general version of this result in the context of joint measurements of POVMs corresponding to a pair of quantum registration
procedures. It turns out that the POVMs associated to any pair of registration procedures admit a joint
observable (see Proposition \ref{prop-joint-BT} below). We shall see that in certain examples geometry of overlaps enables us to detect inherent quantum noise of such a joint observable even in the absence of phase space localization.

As a case study, we explore overlap-induced noise for a pair of special covers of the unit sphere $S^2$ by annuli. We argue that a joint observable for POVMs associated to these covers is responsible for an approximate joint measurement of two components of spin. By using the technique of overlap layers, we get a lower bound
for the inherent noise of such a measurement. Interestingly enough, it is related to the universal uncertainty relation for error bar widths of approximate joint measurements established earlier in \cite{Busch-3, Miyadera-2}. We refer to Sections \ref{sec-appendix} and \ref{sec-discussion}
for further details.

\medskip
\noindent {\sc Comparison with \cite{P}:} The study of quantum noise of POVMs associated to a quantum registration procedure via ``hard" symplectic constraints on the Poisson brackets
has been initiated in our recent paper \cite{P}. For reader's convenience, let us list the main new contributions of the present paper as compared with \cite{P}.
\begin{itemize}
\item We modify the measurement of non-random component of quantum noise by introducing inherent noise, which, in the context of quantum registration procedures,
satisfies $C\hbar \leq \cN_{in} \leq D\hbar$ in the classical limit $\hbar \to 0$  for some non-negative constants $C,D$. Thus the study of the value of the constant
$C$ (as opposed to the mere fact that $C>0$ which was proved in \cite{P}) becomes meaningful. This study eventually leads us to the noise-localization uncertainty relation \eqref{noise-localization-single-intro} which is a manifestation of the qualitative principle ``localization yields noise" established in \cite{P}. \item We find a new (as compared with \cite{P}) mechanism of quantum noise production based on
geometry of overlaps (see Theorem  \ref{thm-intro-2} above). In a somewhat unexpected twist, this
mechanism turns out to be related to the Poisson bracket invariant $pb_4$ introduced earlier
in \cite{BEP} and studied there by means of various flavors of theory of pseudo-holomorphic curves
in symplectic manifolds.
\item Following a suggestion by Paul Busch \cite{Busch-private} we extend our results to joint quantum measurements. In particular, {\it approximate} joint measurements (see Section \ref{sec-backto-quantum}  below) turn out to fit well the framework of quantum registration procedures.
    \end{itemize}
    Saying that, let us mention that we put an effort to make the present paper self-contained and
     hence there are inevitable overlaps with \cite{P}
    as far as various preliminaries from operational quantum physics, the Berezin-Toeplitz quantization
    and symplectic geometry are concerned.

    \medskip
\noindent {\sc Organization of the paper:}
In Section \ref{sec-Preliminaries on POVMs} we remind preliminaries
on POVMs and introduce the inherent quantum noise. The section is concluded with an unsharpness
principle for POVMs and their joint observables which provides a lower bound on the inherent noise
of a POVM in terms of its magnitude of non-commutativity.

In Section \ref{sec-registration}, after a brief reminder
on the Berezin-Toeplitz quantization, we describe the classical registration procedure and its quantum counterpart. Next we define  {\it the noise indicator} which enables us to discuss in a concise language inherent quantum noise of POVMs associated to quantum registration procedures. Finally, we introduce the Poisson bracket invariants of finite open covers of symplectic manifolds which provide lower bounds on the inherent quantum noise.

In Section \ref{sec-mainresults} we study Poisson bracket invariants of fine open covers of symplectic manifolds. The highlight of this section is Theorem \ref{thm-main-fine covers} providing lower
bounds on the Poisson bracket invariants in terms of geometric and combinatorial properties
of covers. The proofs involving methods of function theory on symplectic manifolds are presented in Section \ref{sec-symptop}.

In Section \ref{subsec-pb4-overlaps} we detect the overlap induced noise by using $pb_4$-Poisson
bracket invariant introduced in \cite{BEP} (see Theorem \ref{thm-overlap-layer-pb4}).

In Section \ref{sec-backto-quantum} we present various applications of our results
to quantum mechanics: First, we state and prove the noise-localization uncertainty relation for special classes of fine open covers. Next, after discussing a  link between the registration procedure and approximate quantum measurements, we detect the overlap induced noise for approximate measurements of two components of spin.

The paper is concluded with some open problems stated in Section \ref{sec-further-directions}.

\section{Operational quantum mechanics}\label{sec-Preliminaries on POVMs}

\subsection{Preliminaries on POVMs}
Let $H$ be a complex Hilbert space. In the present paper we deal with finite-dimensional
spaces only.  Denote by $\cL(H)$
the space of all Hermitian bounded operators on $H$. Consider a set $\Omega$ equipped with a $\sigma$-algebra
$\cC$ of its subsets. An $\cL(H)$-valued {\it positive operator valued measure} $A$ on $(\Omega,\cC)$ is
a countably additive map $A: \cC \to \cL(H)$ which takes a subset $X \in \cC$ to a positive operator
$A(X) \in \cL(H)$ and which is normalized by $A(\Omega) = \id$.

POVMs naturally appear in quantum measurement theory \cite{Busch} where they play a role
of generalized observables. The space $\Omega$ is called
the {\it value space} of the observable. Pure states of the system are represented by the points
of the projective space $[\xi] \in \mathbb{P} (H)$, where $\xi \in H$ is a unit vector. When the system is in a state $[\xi]$, the probability of finding the observable $A$ in a subset $X \in \cC$ is postulated to be
$\langle A(X)\xi,\xi \rangle$.

An important class of POVMs is formed by {\it projector valued measures} for which all the operators
$A(X)$, $X \in \cC$ are orthogonal projectors. In the language of quantum measurement theory they are called
{\it sharp} observables. Every  ``usual" von Neumann observable $B \in \cL(H)$ corresponds to the projector
valued measure $\widehat{B}= \sum P_j\delta_{\lambda_j}$ on $\R$, where $B=\sum_j \lambda_j P_j$ is the spectral decomposition of $B$. In this case the statistical postulate above agrees with the one of von Neumann's quantum mechanics.

When $\Omega=\Omega_L:=\{1,...,L\}$ is a finite set, any POVM $A$ on $\Omega$ is fully determined by
$L$ positive Hermitian operators $A_i:= A(\{i\})$ which sum up to $\id$.

\subsection{Smearing of POVMs}\label{subsec-smearing}  Let $\Omega$ and $\Theta$ be Hausdorff locally compact second countable topological spaces and let $\cC,\cD$ be their Borel $\sigma$-algebras respectively. Denote by $\cP(\Omega)$ the set of Borel probability measures on $\Omega$. A {\it Markov kernel} is a map
$$\gamma: \Theta \to \cP(\Omega),\; w \mapsto \gamma_w$$
such that the function $w \to \gamma_w(X)$ on $\Theta$ is measurable for every $X \in \cC$.
Let $A$ and $B$ be POVMs on $(\Omega,\cC)$ and $(\Theta, \cD)$ respectively. We say that
$A$ is a {\it smearing}\footnote{Some authors call it randomization or fuzzification.} of $B$
\cite{Busch, JP, Ali} if there exists a Markov kernel $\gamma$ so that
$$A(X) = \int_\Theta \gamma_w(X)\; dB(w)\;\; \forall X \in \cC\;.$$
In the physical language, each element $w$ of the value set $\Theta$ of $B$ diffuses into
a subset $X \in \cC$ with probability $\gamma_w(X)$.

\medskip

Denote by $K(\Omega)$ the set of all measurable functions $x:\Omega \to \R$ with $\max |x| \leq 1$
which are interpreted as random variables on $\Omega$.
Every Markov kernel $\gamma$ as above defines a {\it smearing operator}
\begin{equation}\label{eq-smearing operator}
\Gamma: K(\Omega) \to K(\Theta),\; (\Gamma x)(w) = \int_{\Omega} x \; d\gamma_w\;\;\forall w \in \Theta\;.
\end{equation}
In the physical slang, the random variable $\Gamma x$ is a coarse-graining of $x$: Its value at
a point $w \in \Theta$ is the expectation of $x$ with respect to the measure $\gamma_w$.

For instance, if $\Omega= \Omega_N=\{1,...,N\}$,
a Markov kernel $\gamma$ is given by a collection of non-negative measurable functions
$\gamma_j$ on $\Theta$, $j=1,...,N$ so that $\sum_j \gamma_j(w)=1$ for all $w \in \Theta$,
that is by a measurable partition of unity.
A POVM $A=\{A_1,...,A_N\}$ on $\Omega_N$,
\begin{equation} \label{eq-smearing-int}
A_j = \int_\Theta \gamma_j dB\;,
\end{equation}
is a smearing of a POVM $B$ on $(\Theta,\cD)$.
Every function $x$ from $K(\Omega_N)$ is canonically identified with
a vector $x=(x(1),...,x(N))$ lying in the cube $K_N := [-1,1]^N$.
The smearing operator $\Gamma$ is given by
$\Gamma(x) = \sum_j x_j\gamma_j$.

\subsection{Quantum noise}
Let $A: \cC \to \cL(H)$ be an $\cL(H)$-valued POVM on $(\Omega,\cC)$.
For a function $x \in K(\Omega)$ put $A(x):= \int x\; dA$ and define the noise operator $$\Delta_A(x) = \int_\Omega x^2 dA - A(x)^2$$ (see \cite[\S 2]{BHL1}, \cite[\S 4]{Ozawa}, \cite[\S 3]{BHL2}, \cite[\S 2]{Massar}).
Roughly speaking, $A(x)$ is the operator valued expectation of the random variable $x$ with respect to POVM $A$, while $\Delta_A(x)$ is its operator-valued variance.

Let us give a more precise interpretation of the noise operator
(see e.g. \cite{Massar}). Fix a quantum state $\xi \in H$, $|\xi|=1$. Introduce the following
pair of random variables, $\phi$ and $\psi$: Consider a measure $\sigma_\xi$ on $\Omega$ given by $\sigma_\xi(X) = \langle A(X)\xi,\xi\rangle$. We define $\phi$ as $x: \Omega \to \R$,
where $\Omega$ is equipped with the probability measure $\sigma_\xi$. The random variable $\psi$
is associated to the von Neumann observable $A(x)$ and the state $\xi$:
It takes values $\lambda_j$ with probability $\langle P_j\xi,\xi \rangle$, where $A(x) =\sum \lambda_jP_j$
is the spectral decomposition.  Both random variables have the same expectation $\langle A(x)\xi,\xi \rangle$, while the difference of their variances
is given in terms of the noise operator:
$$\langle \Delta_A(x)\xi,\xi \rangle= \text{Var}(\phi) - \text{Var}(\psi)\;.$$
We refer to \cite{Massar,P} for the proof and a more detailed discussion.

The noise operator has a number of interesting properties, in particular $\Delta_A(x) \geq 0$.
The equality $\Delta_A(x)=0$ for all $x$ corresponds precisely to the case when $A$ is a projector valued
measure.

\medskip

Introduce the {\it magnitude of noise} $\cN(A):= \max_{K(\Omega)} ||\Delta_A(x)||_{op}$,
where $K(\Omega)$ is defined in the previous section and $||\;.||_{op}$ stands for the operator norm.
The magnitude of noise is a dimensionless quantity satisfying (see \cite{P})
\begin{equation}\label{eq-01}
0 \leq \cN(A) \leq 1\;.
\end{equation}

\medskip
\noindent
Now we are ready to introduce one of the central notions of the present paper.

\medskip
\noindent
\begin{defin}\label{defin-steady noise}{\rm The {\it inherent noise} of a POVM $A$ on $(\Omega,\cC)$
is given by
$$\cN_{in}(A):= \inf_{B,\Gamma} \sup_{x \in K(\Omega)} ||\Delta_B(\Gamma x)||_{op}\;,$$
where the infimum is taken over all POVMs $B$ so that $A$ is a smearing of $B$ and $\Gamma$ is the corresponding smearing operator.}
\end{defin}

\medskip
\noindent
\begin{prop}\label{prop-steady-usual}
$\cN_{in}(A) \leq \cN(A)$.
\end{prop}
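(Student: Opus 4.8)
The plan is to exhibit a particular choice of $B$ and $\Gamma$ in the infimum defining $\cN_{in}(A)$ that makes the quantity being minimized coincide with $\cN(A)$; since the infimum is over all such choices, this immediately yields $\cN_{in}(A) \leq \cN(A)$. The natural candidate is the trivial smearing: take $\Theta = \Omega$, $\cD = \cC$, $B = A$, and let $\gamma$ be the identity Markov kernel, i.e. $\gamma_w = \delta_w$ (the Dirac measure at $w$). One must check that this is indeed a legitimate Markov kernel in the sense of Section \ref{subsec-smearing} — the spaces $\Omega$ here are Hausdorff, locally compact, second countable (in the case of interest $\Omega = \Omega_L$ is finite, so this is immediate), and $w \mapsto \delta_w(X) = \mathbf{1}_X(w)$ is measurable for every $X \in \cC$.

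The second step is to verify that $A$ is the smearing of $B = A$ via this kernel: $\int_\Theta \gamma_w(X)\, dB(w) = \int_\Omega \mathbf{1}_X(w)\, dA(w) = A(X)$ for all $X \in \cC$, which is exactly the defining identity. Correspondingly, the smearing operator $\Gamma$ from \eqref{eq-smearing operator} acts by $(\Gamma x)(w) = \int_\Omega x\, d\delta_w = x(w)$, so $\Gamma = \mathrm{id}$ on $K(\Omega)$.

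The third step is then purely a matter of unwinding definitions: with this choice,
$$\sup_{x \in K(\Omega)} \|\Delta_B(\Gamma x)\|_{op} = \sup_{x \in K(\Omega)} \|\Delta_A(x)\|_{op} = \cN(A),$$
using the definition of the magnitude of noise. Since $\cN_{in}(A)$ is the infimum of $\sup_{x} \|\Delta_B(\Gamma x)\|_{op}$ over all admissible pairs $(B,\Gamma)$, and the pair $(A, \mathrm{id})$ is one such, we conclude $\cN_{in}(A) \leq \cN(A)$.

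There is no real obstacle here; the only point requiring a word of care is confirming that the identity kernel meets the formal requirements of the smearing definition (measurability of $w \mapsto \delta_w(X)$ and that $A$ smears itself), but these are routine. I would state the argument in one short paragraph, essentially: ``take $B = A$ and $\Gamma = \mathrm{id}$; then $\Delta_B(\Gamma x) = \Delta_A(x)$, and taking the infimum over all smearings can only decrease this.''
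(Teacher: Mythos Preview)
Your proof is correct, but it takes a different route from the paper's. You exhibit the single trivial smearing $B=A$, $\Gamma=\mathrm{id}$, for which $\sup_x\|\Delta_B(\Gamma x)\|_{op}=\cN(A)$ exactly, and conclude by noting that an infimum is bounded above by any particular value. The paper instead invokes (and proves) the Martens--de~Muynck inequality $\Delta_B(\Gamma x)\leq \Delta_A(x)$, valid for \emph{every} $B$ such that $A$ is a smearing of $B$; this gives $\sup_x\|\Delta_B(\Gamma x)\|_{op}\leq \cN(A)$ for all admissible $B$, hence for the infimum. Your argument is the more economical one for the bare inequality $\cN_{in}(A)\leq\cN(A)$. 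The paper's approach, however, establishes the operator inequality \eqref{eq-MdeM} along the way, which is reused immediately afterward to interpret $\Delta_A(x)-\Delta_B(\Gamma x)$ as the ``random component'' of the noise and to motivate the terminology \emph{inherent} noise---so that extra work is not wasted.
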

\begin{proof} By a result of Martens and de Muynck \cite[Lemma 2, p.277]{MdeM}
\begin{equation}\label{eq-MdeM} \Delta_B(\Gamma x) \leq \Delta_A(x)\;,
\end{equation}
where $B$ is any smearing of $A$ with the smearing operator $\Gamma$ and $x$ is any function
from $K(\Omega)$. This immediately yields the proposition.

For the sake of completeness, let us prove inequality \eqref{eq-MdeM}.
Observe that $B(\Gamma x) = A(x)$. Furthermore, in the notation of Section \ref{subsec-smearing},
one calculates that
$$\int_\Omega x^2 \; dA -  \int_\Theta (\Gamma x)^2 \; dB= \int_\Theta h(w) dB(w)\;,$$
with
$$h(w) = \int_\Omega x^2 \; d\gamma_w - \Big{(}\int_\Omega x \; d\gamma_w \Big{)}^2\;.$$
Since $h(w) \geq 0$ by the Cauchy-Schwarz inequality, we get \eqref{eq-MdeM}.
\end{proof}

\medskip
\noindent Since smearing can be interpreted as a diffusion, the increment $\Delta_A(x) - \Delta_B(\Gamma x)$ (which is always non-negative by \eqref{eq-MdeM}) plays the role of
a random  component of the noise of $A$. That's why we call $\cN_{in}(A)$ the {\it inherent} (as opposed to random) noise. Let us mention that in \cite{P} a non-random component of the noise of $A$ was measured in a slightly different way:
 we defined {\it a systematic} noise of a POVM $A$ as $\inf_B \cN(B)$ where the infimum is taken over all $B$ so that $A$ is a smearing of $B$. By definition, $||\Delta_B(\Gamma x)||_{op} \leq \cN(B)$, and hence $\cN_{in}(A) \leq \cN_s(A)$. The inherent noise has a significant advantage in comparison with the systematic noise: In the context of quantum registration procedures, one can find an upper bound on $\cN_{in}$
having the correct asymptotic behavior with respect to the quantum number (see Section \ref{subsec-noise-indicator-single} below), while the similar
problem for $\cN_s$ is at the moment out of reach. At the same time, both $\cN_{in}$ and $\cN_s$ obey
an unsharpness principle which we are going to discuss in the next section.

\medskip

It turns out that the inherent noise behaves monotonically under smearings.

\medskip
\noindent
\begin{prop} \label{prop-behaviour-smearings}
$\cN_{in}(A) \leq \cN_{in}(B)$ provided $A$ is a smearing of $B$.
\end{prop}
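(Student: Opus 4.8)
The plan is to show that smearing is transitive at the level of POVMs \emph{and} smearing operators, and then to invoke the Martens--de Muynck inequality \eqref{eq-MdeM} together with the definition of inherent noise. Concretely, suppose $A$ is a smearing of $B$ via a Markov kernel $\gamma: \Theta \to \cP(\Omega)$ with associated smearing operator $\Gamma: K(\Omega) \to K(\Theta)$, and suppose further that $B$ is a smearing of some POVM $C$ via a Markov kernel $\delta: \Xi \to \cP(\Theta)$ with smearing operator $\Delta: K(\Theta) \to K(\Xi)$. The first step is to check that $A$ is then a smearing of $C$. One defines the composed Markov kernel $\eta: \Xi \to \cP(\Omega)$ by $\eta_u(X) = \int_\Theta \gamma_w(X)\, d\delta_u(w)$ for $X$ in the relevant $\sigma$-algebra; measurability of $u \mapsto \eta_u(X)$ follows from measurability of the two given kernels and Fubini/Tonelli, and the Chapman--Kolmogorov-type identity
$$A(X) = \int_\Theta \gamma_w(X)\, dB(w) = \int_\Theta \gamma_w(X) \Big( \int_\Xi d\delta_u(w)\, dC(u)\Big) = \int_\Xi \eta_u(X)\, dC(u)$$
exhibits $A$ as a smearing of $C$. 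Moreover the associated smearing operator of $\eta$ is exactly $\Delta \circ \Gamma$, since for $x \in K(\Omega)$ one has $(\Delta\Gamma x)(u) = \int_\Theta (\Gamma x)(w)\, d\delta_u(w) = \int_\Theta \int_\Omega x\, d\gamma_w\, d\delta_u(w) = \int_\Omega x\, d\eta_u$.

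The second step is to compare noise operators. Apply inequality \eqref{eq-MdeM} of Proposition \ref{prop-steady-usual} to the smearing of $C$ by $B$, with the function $\Gamma x \in K(\Theta)$ in place of $x$: this gives $\Delta_C(\Delta(\Gamma x)) \leq \Delta_B(\Gamma x)$ as operators, for every $x \in K(\Omega)$. Taking operator norms and then the supremum over $x \in K(\Omega)$ yields
$$\sup_{x \in K(\Omega)} \|\Delta_C(\Delta\Gamma x)\|_{op} \;\leq\; \sup_{x \in K(\Omega)} \|\Delta_B(\Gamma x)\|_{op}\;.$$
Now take the infimum of the left-hand side over all POVMs $C$ of which $B$ is a smearing (with $\Delta$ the corresponding smearing operator). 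Since, by the first step, every such $C$ is also a POVM of which $A$ is a smearing — with smearing operator $\Delta\Gamma$ — the left-hand infimum is bounded below by $\cN_{in}(A)$. Hence $\cN_{in}(A) \leq \sup_{x} \|\Delta_B(\Gamma x)\|_{op}$ for every smearing presentation of $B$ by some $C$. Finally, the right-hand side still depends on the pair $(B,\Gamma)$ only through $B$ being a smearing of $A$, wait — more carefully: we have fixed that $A$ is a smearing of $B$ via $(\gamma,\Gamma)$, and shown $\cN_{in}(A) \leq \sup_x\|\Delta_B(\Gamma x)\|_{op}$; taking the infimum of the right-hand side over all POVMs $B$ of which $A$ is a smearing, and all corresponding $\Gamma$, gives precisely $\cN_{in}(A) \leq \cN_{in}(B)$ once we note the claim is about a \emph{fixed} $B$ that $A$ smears. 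To get the stated inequality for the given $B$: apply the bound with the infimum defining $\cN_{in}(B)$ taken over $(C,\Delta)$, which is exactly the right-hand side of the displayed inequality above, giving $\cN_{in}(A) \leq \cN_{in}(B)$.

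The routine part is the measure-theoretic bookkeeping in the first step (checking measurability and justifying the interchange of integrals for the composed kernel). The only genuinely delicate point is making sure the infima are manipulated correctly: the key structural observation, which does all the work, is that the set of pairs $(C,\Delta\Gamma)$ arising from presentations of $B$ as a smearing of $C$ is a \emph{subset} of the set of pairs $(C,\Gamma')$ arising from presentations of $A$ as a smearing of $C$. This inclusion of feasible sets immediately forces the infimum defining $\cN_{in}(A)$ to be no larger than the infimum defining $\cN_{in}(B)$, which is the assertion. I expect the write-up to be short; the transitivity of smearing is the conceptual core and everything else is a two-line consequence of \eqref{eq-MdeM}.
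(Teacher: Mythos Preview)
Your core idea---transitivity of smearing plus the resulting inclusion of feasible sets---is exactly the paper's approach. But the execution has one unnecessary detour and one genuine gap.

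The detour: invoking the Martens--de Muynck inequality \eqref{eq-MdeM} does no work here. From it you derive $\sup_{x}\|\Delta_C(\Delta\Gamma x)\|_{op}\le \sup_{x}\|\Delta_B(\Gamma x)\|_{op}$, and after the feasible-set argument this only yields $\cN_{in}(A)\le \sup_{x}\|\Delta_B(\Gamma x)\|_{op}$---which is already immediate from the definition of $\cN_{in}(A)$, since $(B,\Gamma)$ is one of the pairs in the infimum. Your attempt to then take an infimum of the right-hand side over all $B$ with $A$ a smearing of $B$ is circular: it returns $\cN_{in}(A)\le\cN_{in}(A)$, not the claimed bound for the \emph{fixed} $B$ in the hypothesis.

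The gap: the final paragraph has the right skeleton but is missing the step that actually compares the two infima. The inclusion $\{(C,\Delta\Gamma):C\to B\}\subset\{(C,\Gamma'):C\to A\}$ gives only
\[
\cN_{in}(A)\;\le\;\inf_{(C,\Delta):\,C\to B}\ \sup_{x\in K(\Omega)}\|\Delta_C(\Delta\Gamma x)\|_{op}\,.
\]
The right-hand side is \emph{not} $\cN_{in}(B)$: in the definition of $\cN_{in}(B)$ the supremum is over $y\in K(\Theta)$, not over $\Gamma x$ with $x\in K(\Omega)$. What closes the argument is the trivial observation $\Gamma(K(\Omega))\subset K(\Theta)$, which gives $\sup_{x\in K(\Omega)}\|\Delta_C(\Delta\Gamma x)\|_{op}\le \sup_{y\in K(\Theta)}\|\Delta_C(\Delta y)\|_{op}$ for each fixed $(C,\Delta)$. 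This is precisely the inequality the paper writes down explicitly, and once you insert it the proof is two lines---with no appeal to \eqref{eq-MdeM} needed.
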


\begin{proof}
Let us introduce the following notation: For a POVM $A$ we write $\Omega^A$ for the value space
of $A$ and we abbreviate $K^A := K(\Omega^A)$. Following \cite{MdeM}, we write $B \to A$ if $A$ is a smearing of $B$, and denote by $\Gamma_{BA}: K^A \to K^B$ the smearing operator.

Observe that if $C \to B \to A$ we have that $\Gamma_{CA}=\Gamma_{CB}\Gamma_{BA}$. Therefore
$$\sup_{x \in K^A} ||\Delta_C(\Gamma_{CA} x)||_{op} = \sup_{x \in K^A} ||\Delta_C(\Gamma_{CB}\Gamma_{BA} x)||_{op} \leq \sup_{y \in K^B} ||\Delta_C(\Gamma_{CB}y)||_{op}\;,$$ and hence
$$\inf_{C:\; C \to B \to A} \sup_{x \in K^A} ||\Delta_C(\Gamma_{CA} x)||_{op} \leq \inf_{C:\; C \to B} \sup_{y \in K^B} ||\Delta_C(\Gamma_{CB}y)||_{op} = \cN_{in}(B)\;.$$
Note that
$$\{C\;:\; C \to B \to A\} \subset \{C: C \to A\}\;.$$
Thus the left hand side of the last inequality is $\geq \cN_{in}(A)$.
We conclude that $\cN_{in}(A) \leq \cN_{in}(B)$, as required.
\end{proof}

\subsection{An unsharpness principle}
For an $\cL(H)$-valued POVM $A$ on $(\Omega,\cC)$ define the magnitude of non-commutativity
\begin{equation}\label{eq-noncom}
\nu_q(A) = \sup_{x,y \in K(\Omega)} ||[A(x),A(y)]||_{op}\;,
\end{equation}
where $[.,.]$ denotes the commutator and the subindex $q$ stands for quantum.

\medskip
\noindent
\begin{thm} [Unsharpness principle] \label{thm-unsharpness}
\begin{equation}\label{eq-unshprsingle}
\cN_{in}(A) \geq \frac{1}{2} \nu_q(A)\;.
\end{equation}
\end{thm}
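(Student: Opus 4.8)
The plan is to bound the commutator $[A(x),A(y)]$ by the noise operator of \emph{any} POVM that $A$ smears, and then optimize. Fix a POVM $B$ on $(\Theta,\cD)$ such that $A$ is a smearing of $B$, with smearing operator $\Gamma\colon K(\Omega)\to K(\Theta)$, and fix $x,y\in K(\Omega)$. As observed in the proof of Proposition~\ref{prop-steady-usual}, $B(\Gamma x)=A(x)$; so writing $u=\Gamma x$ and $v=\Gamma y$ (both in $K(\Theta)$ since $\Gamma$ maps into $K(\Theta)$) we have $[A(x),A(y)]=[B(u),B(v)]$. Thus the whole theorem reduces to the abstract estimate
$$||[B(u),B(v)]||_{op}\ \le\ 2\sqrt{\,||\Delta_B(u)||_{op}\,||\Delta_B(v)||_{op}\,}\qquad\text{for all }u,v\in K(\Theta),$$
because this gives $||[A(x),A(y)]||_{op}\le 2\sup_{z\in K(\Omega)}||\Delta_B(\Gamma z)||_{op}$ (using $\sqrt{ab}\le\max(a,b)$), and then taking the supremum over $x,y$ and the infimum over all admissible $(B,\Gamma)$ turns this into $\nu_q(A)\le 2\,\cN_{in}(A)$.

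To prove the abstract estimate I would dilate $B$ by Naimark's theorem: there exist a Hilbert space $\hat H$, an isometry $W\colon H\hookrightarrow\hat H$, and a \emph{projection}-valued measure $P$ on $(\Theta,\cD)$ with $B(X)=W^*P(X)W$. Writing $P(f)=\int f\,dP$ for bounded measurable real $f$, projection-valuedness gives the multiplicativity $P(f)P(g)=P(fg)$, in particular $P(u)^2=P(u^2)$ and $[P(u),P(v)]=0$, while $B(f)=W^*P(f)W$. Let $E=WW^*$ be the orthogonal projection onto $WH$ and put $Q=\id-E$ (on $\hat H$). A direct computation then gives, for $u\in K(\Theta)$,
$$\Delta_B(u)=B(u^2)-B(u)^2=W^*P(u)^2W-W^*P(u)EP(u)W=W^*P(u)\,Q\,P(u)W=S_u^*S_u,$$
where $S_u:=QP(u)W$, so that $||\Delta_B(u)||_{op}=||S_u||_{op}^2$, and similarly for $v$. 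For the commutator, substituting $E=\id-Q$ and using $P(u)P(v)=P(v)P(u)$ to cancel the resulting $W^*\big(P(u)P(v)-P(v)P(u)\big)W$ term yields
$$[B(u),B(v)]=W^*P(u)EP(v)W-W^*P(v)EP(u)W=S_v^*S_u-S_u^*S_v.$$
Since $(S_v^*S_u)^*=S_u^*S_v$, the triangle inequality and submultiplicativity of the operator norm give $||[B(u),B(v)]||_{op}\le 2||S_v^*S_u||_{op}\le 2||S_u||_{op}\,||S_v||_{op}=2\sqrt{||\Delta_B(u)||_{op}||\Delta_B(v)||_{op}}$, which is exactly the claimed bound.

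The only step carrying genuine content is the idea of dilating $B$ and the structural observation that, after the dilation, the commutator of $B(u)$ and $B(v)$ collapses to $S_v^*S_u-S_u^*S_v$, built from the very same ``defect'' operators $S_u=QP(u)W$ whose squares are the noise operators $\Delta_B(u)$; once this identity is in hand, the inequality is only the triangle inequality plus $\|T^*\|=\|T\|$. Everything else is routine bookkeeping: the Naimark dilation and spectral calculus apply verbatim over a locally compact second countable value space, and since $u,v\in K(\Theta)$ satisfy $|u|,|v|\le 1$ all operators involved are bounded and all integrals converge. (If one prefers not to invoke Naimark by name, the identical computation can be carried out inside the Stinespring/GNS Hilbert space of the unital positive map $f\mapsto\int f\,dB$, which is the same object.)
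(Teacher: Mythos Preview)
Your proof is correct and follows the same overall strategy as the paper: reduce to the inequality $\|[B(u),B(v)]\|_{op}\le 2\|\Delta_B(u)\|_{op}^{1/2}\|\Delta_B(v)\|_{op}^{1/2}$, note that $B(\Gamma x)=A(x)$, and then take the supremum over $x,y$ and the infimum over $(B,\Gamma)$. The paper simply cites this key inequality as ``a lemma of Janssens'' (with references to \cite{Janssens,MI,P}), whereas you supply a self-contained proof via Naimark dilation, exhibiting the identity $[B(u),B(v)]=S_v^*S_u-S_u^*S_v$ and $\Delta_B(u)=S_u^*S_u$ with $S_u=QP(u)W$; this is a clean and instructive derivation that the paper does not give, but the two arguments are otherwise identical.
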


\begin{proof} Let $B$ be an $\cL(H)$-valued POVM on $(\Theta,\cD)$.
We shall use the following inequality which appears in
\cite{I,Janssens,MI,P} as well in \cite[Theorem 7.5]{Hayashi}:
\begin{equation}\label{eq-janssens-0}
||\Delta_B(u)||^{1/2}_{op}\cdot ||\Delta_B(v)||^{1/2}_{op} \geq \frac{1}{2} \cdot ||\;[B(u),B(v)]\;||_{op}\;
\end{equation}
for all $u,v \in K(\Theta)$.

Suppose now that an $\cL(H)$-valued POVM $A$ on $(\Omega,\cC)$ is a smearing
of $B$ with the smearing operator $\Gamma$. Take any $x,y \in K(\Omega)$ and substitue
$u = \Gamma x$ and $v = \Gamma y$ into the above inequality. Since $B(u) = A(x)$ and $B(v)=A(y)$
we get that
\begin{equation} \label{eq-janssens}
||\Delta_B(\Gamma x)||^{1/2}_{op}\cdot ||\Delta_B(\Gamma y)||^{1/2}_{op} \geq \frac{1}{2} \cdot ||\;[A(x),A(y)]\;||_{op}\;.
\end{equation}
Therefore
$$\sup_x ||\Delta_B(\Gamma x)||_{op} \geq \frac{1}{2} \nu_q(A)\;,$$
which yields the unsharpness principle.
\end{proof}

It is known \cite[Section 5]{Ali} (cf. \cite{JP}) that every commutative POVM on a Hausdorff locally compact second countable space is necessarily a smearing of a sharp observable, that is of a projector valued measure. In particular, $\cN_{in}(A)=0$ provided $\nu_q(A)=0$. The unsharpness principle above shows that the converse statement is also true.

Let us mention also that the magnitude of non-commutativity behaves monotonically with respect to smearings
(see \cite{P}):
\begin{equation}\label{eq-monoton}
\nu_q(B) \geq \nu_q(A)\;\; \text{if}\;\; A\;\;\text{is a smearing of}\;\; B\;.
\end{equation}
Thus after a smearing both sides of inequality \eqref{eq-unshprsingle}
decrease.

\subsection{An unsharpness principle for joint measurements}\label{sec-unsharpness}
In this section we deal with POVMs defined on finite sets of the form $\Omega_L=\{1,...,L\}$.
Recall that two $\cL(H)$-valued POVMs $A$ and $B$ on $\Omega_L$ and $\Omega_N$ are {\it jointly measurable} \cite{Busch} if there exists a POVM, say $C=\{C_{ij}\}$ on $\Omega:=\Omega_L \times \Omega_N$
whose marginals equal $A$ and $B$:
$$A_i =\sum_j C_{ij},\;\; B_j = \sum_i C_{ij}\;\; \forall i \in \Omega_L,\;j\in \Omega_N\;.$$
Such a POVM $C$ is called {\it a joint observable} for $A$ and $B$. Let us emphasize
that the question on joint measurability is a delicate one, and not every two POVMs admit a joint
observable.

Given two POVMs $A,B$ on $\Omega_L$ and $\Omega_N$ respectively,
define the magnitude of (mutual) non-commutativity of $A$ and $B$ by
$$\nu_q(A,B) = \max_{x \in K_L,y\in K_N} ||[A(x),B(y)]||_{op} \;,$$
where as earlier  $A(x):=\sum x_i A_i$ and $K_L = [-1;1]^L$.
With this notation, $\nu_q(A,A) = \nu_q(A)$, where the latter quantity is introduced in \eqref{eq-noncom}
above.

\medskip
\noindent
\begin{prop}[Unsharpness principle for joint measurements]\label{prop-joint-unsharpness}
Let $C$ be a joint observable for $\cL(H)$-valued POVMs $A$ and $B$ on $\Omega_L$ and $\Omega_N$ respectively
Then
\begin{equation}
\label{eq-joint}
\cN_{in}(C) \geq \frac{1}{2}\cdot\max\Big{(}\nu_q(A),\nu_q(B), \nu_q(A,B)\Big{)}\;.
\end{equation}
\end{prop}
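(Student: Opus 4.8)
The plan is to reduce Proposition~\ref{prop-joint-unsharpness} to the single-POVM unsharpness principle (Theorem~\ref{thm-unsharpness}) by exhibiting $A$ and $B$ as smearings of the joint observable $C$. Indeed, if $C=\{C_{ij}\}$ is a POVM on $\Omega_L\times\Omega_N$ with marginals $A_i=\sum_j C_{ij}$ and $B_j=\sum_i C_{ij}$, then the projection maps $\pi_A:\Omega_L\times\Omega_N\to\Omega_L$ and $\pi_B:\Omega_L\times\Omega_N\to\Omega_N$ induce deterministic Markov kernels, and consequently $A$ is a smearing of $C$ (with smearing operator $\Gamma_A x = \sum_{i,j} x_i\,e_{ij}$, i.e. $(\Gamma_A x)_{ij}=x_i$) and likewise $B$ is a smearing of $C$. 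Therefore, by the monotonicity of the magnitude of non-commutativity under smearings (inequality~\eqref{eq-monoton}), we get $\nu_q(C)\geq\nu_q(A)$ and $\nu_q(C)\geq\nu_q(B)$, and applying the unsharpness principle \eqref{eq-unshprsingle} to $C$ yields $\cN_{in}(C)\geq\frac12\nu_q(C)\geq\frac12\max(\nu_q(A),\nu_q(B))$.

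The remaining, genuinely new point is the bound $\cN_{in}(C)\geq\frac12\nu_q(A,B)$, which involves \emph{mutual} non-commutativity and hence does not follow from a single application of \eqref{eq-unshprsingle}. Here I would return to Janssens' inequality \eqref{eq-janssens-0} directly. Let $D$ be any POVM on some $(\Theta,\cD)$ such that $C$ is a smearing of $D$ with smearing operator $\Gamma$. Given $x\in K_L$ and $y\in K_N$, form the functions $\widehat x,\widehat y\in K(\Omega_L\times\Omega_N)$ by $\widehat x_{ij}=x_i$ and $\widehat y_{ij}=y_j$; then $C(\widehat x)=\sum_{ij}x_i C_{ij}=\sum_i x_i A_i=A(x)$ and similarly $C(\widehat y)=B(y)$. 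Substituting $u=\Gamma\widehat x$ and $v=\Gamma\widehat y$ into \eqref{eq-janssens-0}, and using $D(\Gamma\widehat x)=C(\widehat x)=A(x)$, $D(\Gamma\widehat y)=C(\widehat y)=B(y)$, we obtain
\begin{equation*}
\|\Delta_D(\Gamma\widehat x)\|_{op}^{1/2}\cdot\|\Delta_D(\Gamma\widehat y)\|_{op}^{1/2}\geq\tfrac12\,\|[A(x),B(y)]\|_{op}\;.
\end{equation*}
Since $\widehat x$ and $\widehat y$ both lie in $K(\Omega_L\times\Omega_N)$, each of the two left-hand factors is bounded above by $\sup_{z\in K(\Omega_L\times\Omega_N)}\|\Delta_D(\Gamma z)\|_{op}$; taking the supremum over $x\in K_L$, $y\in K_N$ and then the infimum over all such $D$ gives $\cN_{in}(C)\geq\frac12\nu_q(A,B)$. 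Combining the three bounds proves \eqref{eq-joint}.

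The only subtlety I anticipate is purely bookkeeping: making sure that the ``lifted'' random variables $\widehat x$, $\widehat y$ are legitimate elements of $K(\Omega_L\times\Omega_N)$ (they are, since $|x_i|\le1$ forces $|\widehat x_{ij}|\le1$), and that the identity $C(\widehat x)=A(x)$ is exactly the marginal condition — both are immediate. One could alternatively phrase the whole argument by observing that $\nu_q(A,B)\le\nu_q(C)$ follows from the marginal relations together with the fact that $A(x)$ and $B(y)$ are images under $C$ of commuting-coordinate functions, but the direct substitution into Janssens' inequality is cleaner and self-contained. I do not expect any real obstacle here; the proposition is essentially a corollary of the machinery already assembled in this section, with the mutual-commutator term handled by running the proof of Theorem~\ref{thm-unsharpness} with the two marginals in place of a single POVM.
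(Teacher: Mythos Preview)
Your proposal is correct and follows essentially the same route as the paper. The only difference is cosmetic: for the mutual term the paper simply observes that $A(x)=C(\widehat x)$ and $B(y)=C(\widehat y)$ with $\widehat x,\widehat y\in K(\Omega_L\times\Omega_N)$, hence $\nu_q(A,B)\le\nu_q(C)$, and then applies the unsharpness principle \eqref{eq-unshprsingle} once to $C$ to get all three bounds simultaneously---exactly the alternative you mention at the end of your write-up, rather than re-running Janssens' inequality for the mutual term.
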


\medskip
\noindent
We refer to \cite{MI} for related results.

\begin{proof}
Indeed, take any $x \in K_L$, $y \in K_N$ and note that
both $A(x)$ and $B(y)$ are linear combinations of $C_{ij}$ with coefficients
lying in $[-1;1]$. Thus
$\nu_q(C) \geq \nu_q(A,B)$. Furthermore, $A$ and $B$ are smearings
of $C$, and thus by \eqref{eq-monoton} $\nu_q(C) \geq \nu_q(A)$ and
$\nu_q(C) \geq \nu_q(B)$. Therefore
$$\nu_q(C) \geq  \max\Big{(}\nu_q(A),\nu_q(B), \nu_q(A,B)\Big{)}\;,$$
and so \eqref{eq-joint} follows from the unsharpness principle \eqref{eq-unshprsingle}. \end{proof}

\section{Classical and quantum registration procedures}\label{sec-registration}

\subsection{The Berezin-Toeplitz quantization}\label{sec-BT}
Recall that by the classical Darboux theorem, near each point
of a symplectic manifold $(M,\omega)$ one can choose local coordinates $p_1,q_1,...,p_n,q_n$ so that in these coordinates $\omega = \sum_{j=1}^n dp_j \wedge dq_j$. The space $C^{\infty}(M)$ of smooth functions on $M$
is equipped with {\it the Poisson bracket} $\{f,g\}$, which in the Darboux coordinates $(p,q)$ is given by $$\{f,g\}(p,q) = \frac{\partial f}{\partial q} \cdot \frac{\partial g}{\partial p}-\frac{\partial f}{\partial p} \cdot \frac{\partial g}{\partial q}\;.$$

Let $(M,\omega)$ be a closed symplectic manifold such that the form $\omega/2\pi$ represents
an integral de Rham cohomology class. In what follows such symplectic manifolds will be called {\it quantizable}. The Berezin-Toeplitz quantization \cite{Berezin,BMS,Gu,BU,MM,S}
consists of a sequence of finite-dimensional complex Hilbert spaces $H_m$, $m \to \infty$ of the increasing dimension and a family
of surjective $\R$-linear maps $T_m: C^{\infty}(M) \to \cL(H_m)$ with the following properties:
\begin{itemize}
\item[{(BT1)}] $T_m(1) =\id$;
\item[{(BT2)}] $T_m(f) \geq 0$ for $f \geq 0$;
\item[{(BT3)}]  $||T_m(f)||_{op} = ||f||+ O(1/m)$;
\item[{(BT4)}] ({\it the correspondence principle}) $||-i \cdot m[T_m(f),T_m(g)] - T_m(\{f,g\})||_{op} = O(1/m)$;
\item[{(BT5)}]  $||T_m(f^2) - T_m(f)^2||_{op} = O(1/m)$,
\end{itemize}
as $m \to \infty$ for all $f,g \in C^{\infty}(M)$.
Here $||f|| = \max |f(x)|$ stands for the uniform norm of a function $f \in C^{\infty}(M)$,
$\{f,g\}$ for the Poisson bracket, $||A||_{op}$ for the operator norm of $A \in \cL(H)$ and
$[A,B]$ for the commutator $AB-BA$. The number $m$ plays the role of the quantum number,
while the Planck constant $\hbar$ equals $1/m$, so that $m \to \infty$ is the classical limit.
As an immediate consequence of (BT3) and (BT4) we get that
\begin{multline}\label{eq-pbquantbound}
||m[T_m(f),T_m(g)]\;||_{op} = ||T_m(\{f,g\})||_{op}+ O(1/m) \\ = ||\{f,g\}|| + O(1/m)
\end{multline}
for all smooth functions $f,g$ on $M$.

\medskip

The Berezin-Toeplitz quantization can be described in the language of POVMs : There exists a sequence of $\cL(H_m)$-valued POVMs $G_m$ on the symplectic manifold $M$ equipped with the Borel $\sigma$-algebra so that
\begin{equation}\label{eq-POVM_Toeplitz}
T_m(f)=\int_M f\; dG_m\;.
\end{equation}
This follows from  Proposition 1.4.8 of Chapter II in \cite{L} (the argument of \cite{L} is repeated in \cite{P}).
Property (BT5) deserves a special discussion. Let $f,g \in C^{\infty}(M)$ be any two functions.
Assume without loss of generality that $||f||\leq 1$ and $||g|| \leq 1$. Then
$\Delta_{G_m} (f) = T_m(f^2) - T_m(f)^2$ and hence by \eqref{eq-janssens-0} and \eqref{eq-pbquantbound} above
\begin{multline}\label{eq-surprising}
|| T_m(f^2) - T_m(f)^2||_{op}^{1/2} \cdot || T_m(g^2) - T_m(g)^2||_{op}^{1/2} \\ \geq \frac{1}{2}\cdot ||[T_m(f),T_m(g)]||_{op} = ||\{f,g\}||/(2m) + O(1/m^2)\;.
\end{multline}
Assume now that the function $f$ is non-constant. Then there exists a function $g$ with $\{f,g\} \neq 0$,
and we conclude that   $T_m(f^2) \neq T_m(f)^2$ for all sufficiently large $m$.

\subsection{Registration}\label{subsec-registration}
 Let $(M,\omega)$ be a closed connected symplectic manifold playing the role of the phase space of a classical system. Take a finite open cover  $\cU=\{U_1,...,U_L\}$ of $M$. A partition of unity $\f=\{f_1,...,f_L\}$ subordinated to $\cU$ is a collection of smooth non-negative
real-valued functions $f_i$ on $M$ with $\text{supp}(f_i) \subset U_i$ and $f_1+...+f_L=1$.
Here $\text{supp}(f)$ stands for the support of the function $f$, that is for the closure of the set
$\{f\neq 0\}$. Such partitions of unity naturally appear in the following {\it registration procedure}
\cite{P}:  every point $z \in M$ is registered with probability $f_i(z)$ in exactly one of the subsets $U_i$ of the cover containing this point. This procedure can be considered as an attempt of phase space
localization of the classical system with respect to the cover $\cU$: It yields an answer to the question {\it `Where (i.e. in which set $U_j$) is the system located?'}

\medskip
\noindent \begin{exam}\label{exam-greedy} {\rm Fix a Riemannian metric on $M$ and a number $r >0$
which is sufficiently small in comparison with the injectivity radius of $M$. Let $\rho$ be the correspondent
distance function. For a point $z \in M$ denote by $D(z,r)$ the open metric ball of radius $r$ centered at $z$. A (necessarily, finite) collection of distinct points $\{z_1,...,z_L\}$ in $M$ is called {\it $r/2$-separated} (cf. \cite{BBB-metric-geometry}) if $\rho(z_i,z_j) \geq r/2$ for all $i\neq j$. Take any $r/2$-separated collection and add points to it keeping it $r/2$-separated until the process terminates. The resulting collection, say, $Z$ is {\it maximal}: there is no $r/2$-separated collection containing it as a proper subset. In other words, every point $z \in M$ lies at distance $< r/2$ from some $z_j \in Z$. Therefore the balls $D(z,r/2)$, $z \in Z$ cover $M$. We call the cover $\{D(z,r)\}$, $z \in Z$ {\it greedy} (by technical reasons,
it is convenient to deal with balls of radius $r$ as opposed to $r/2$). The corresponding registration problem reflects a naive attempt of phase space discretization.
}\end{exam}

\medskip
\noindent \begin{exam}\label{exam-one-func}{\rm  Take a smooth function $F: M \to \R$. Consider the closed interval $I:= [\min F,\max F]$. Let $\{W_i\}$ be a cover of $I$ by open intervals of length $< c$, and let $\{h_i\}$ be smooth
functions  $\R \to [0,1]$ so that $\sum h_i =1$ on $I$ and each $h_i$ is supported in $W_i$. Then the functions $h_i\circ F$ form a partition of unity subordinated to the open cover $\{F^{-1}(W_i)\}$
of $M$. The outcome of the corresponding registration procedure can be spelled out as follows:
the value of $F$ at a given point $z \in M$ lies in the interval $W_i$ with the probability
$p_i = h_i(F(z))$. Thus the registration procedure yields an approximation to the genuine value
$F(z)$ with the error $c$. This example will serve as a starting point of our discussion
on approximate measurements in Section \ref{sec-appendix} below.
}\end{exam}

In order to describe a quantum counterpart of the registration procedure, fix a scheme $T_m$ of the
Berezin-Toeplitz quantization. It takes the partition of unity $\{f_i\}$ to an $\cL(H_m)$-valued POVM $A^{(m)}$ on the finite set $\Omega_L=\{1,...,L\}$, where $A^{(m)}_i:= T_m(f_i)$. Being  prepared in a pure state
$[\xi] \in \mathbb{P} (H_m)$, $|\xi|=1$, the quantum system is registered in the set $U_i$ with probability
$\langle T_m(f_i) \xi,\xi\rangle$.

\subsection{The noise indicator: a single partition of unity}\label{subsec-noise-indicator-single}
Fix a scheme of the Berezin-Toeplitz quantization
$T_m: C^{\infty}(M) \to \cL(H_m)$. For a partition of unity $\f= \{f_i\}$, $i=1,...,L$  of $M$
consider the POVM $A^{(m)} = \{T_m(f_j\}$ and focus on the sequence of non-negative numbers
$m \cdot \cN_{in}(\{T_m(f_i)\}$, $m \geq 0$, where $\cN_{in}$ stands for the inherent noise.
First of all we claim that this sequence is necessarily bounded from above (this justifies
inequality \eqref{eq-noise-upper-intro} from the introduction).
Indeed, observe that
$A^{(m)}$ is a smearing of the Berezin-Toeplitz POVM $G_m$, where the smearing operator is given by
the partition of unity:
$$\Gamma x =  \sum_i x_i f_i\;$$ for every
$x \in K_L=[-1,1]^L$.
The noise operator can be written as
$$\Delta_{G_m}(\Gamma x) = T_m( (\Gamma x)^2) - T_m(\Gamma x)^2\;.$$
Since the functions $\Gamma x$, $x \in K_L=[-1,1]^L$ form a compact subset in $C^\infty$-topology,
the property (BT5) of the Berezin-Toeplitz quantization yields the bound
$$\sup_{x \in K_L} ||\Delta_{G_m}(\Gamma x)||_{op} \leq D/m $$
for some $D > 0$. By definition, the left hand side of this inequality is $\geq \cN_{in}(A^{(m)})$,
and hence $m \cdot \cN_{in}(\{T_m(f_i)\}) \leq D$. The claim follows.
Therefore, the quantity
$\mu_T(\f):= \liminf_{m \to \infty} m \cdot \cN_{in}(\{T_m(f_i)\})$
is necessarily finite. Now we are ready to define one of the central notions of the present paper.

\medskip
\noindent
\begin{defin} \label{def-noise-indicator-single} {\rm Let $\cU=\{U_1,...,U_L\}$ be
a finite open cover of a closed quantizable symplectic manifold $(M,\omega)$.
The {\it noise indicator} of $\cU$ is defined as
$$\mu(\cU):= \inf_{\f,T} \mu_T(\f)\;,$$
where the infimum is taken over all partitions of unity $\f$ subordinated to $\cU$
and over all schemes $T_m$ of the Berezin-Toeplitz quantization.}
\end{defin}

\subsection{The noise indicator: joint measurements} \label{subsec-noise-indicator-joint}
Consider POVMs $A^{(m)}=\{T_m(f_i)\}$ and $B^{(m)}= \{T_m(g_j)\}$,
where $\f= \{f_i\}$ and $\g= \{g_j\}$ are partitions of unity on $M$.

\medskip
\noindent
\begin{prop}\label{prop-joint-BT}
POVMs $A^{(m)}$ and $B^{(m)}$ necessarily admit a joint observable
on $\Omega:= \Omega_L \times \Omega_N$.
\end{prop}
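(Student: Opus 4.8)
The plan is to build the joint observable $C = \{C_{ij}\}$ on $\Omega_L \times \Omega_N$ directly out of the Berezin-Toeplitz POVM $G_m$ by a ``double smearing.'' Recall from \eqref{eq-POVM_Toeplitz} that $T_m(f) = \int_M f \, dG_m$, so that $A^{(m)}_i = T_m(f_i) = \int_M f_i \, dG_m$ and $B^{(m)}_j = T_m(g_j) = \int_M g_j \, dG_m$. The natural candidate is
$$C_{ij} := \int_M f_i g_j \, dG_m = T_m(f_i g_j)\;.$$
I would first check that this is a genuine POVM on $\Omega_L \times \Omega_N$: each $f_i g_j$ is a non-negative smooth function, so $C_{ij} = T_m(f_i g_j) \geq 0$ by property (BT2) (equivalently by positivity of $G_m$); and since $\f$ and $\g$ are partitions of unity, $\sum_{i,j} f_i g_j = (\sum_i f_i)(\sum_j g_j) = 1$, whence $\sum_{i,j} C_{ij} = T_m(1) = \id$ by (BT1).

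Next I would verify the marginal conditions. Summing over $j$ and using $\sum_j g_j = 1$ gives $\sum_j C_{ij} = T_m\big(f_i \sum_j g_j\big) = T_m(f_i) = A^{(m)}_i$; symmetrically $\sum_i C_{ij} = T_m\big(g_j \sum_i f_i\big) = T_m(g_j) = B^{(m)}_j$. Here the only property used is $\R$-linearity of $T_m$ together with the fact that $\{f_i\}$ and $\{g_j\}$ sum to the constant function $1$. This establishes that $C$ is a joint observable for $A^{(m)}$ and $B^{(m)}$, which is exactly the assertion of the proposition.

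There is essentially no obstacle here, since the delicacy of joint measurability --- emphasized in Section~\ref{sec-unsharpness}, where it is stressed that not every pair of POVMs admits a joint observable --- is circumvented precisely because both $A^{(m)}$ and $B^{(m)}$ arise from the \emph{same} underlying positive operator valued measure $G_m$ on $M$ via smearing by the partitions of unity $\f$ and $\g$. The only point requiring a word of care is that the product partition of unity $\{f_i g_j\}$ need only be subordinated to the cover $\{U_i \cap V_j\}$ (where $\{V_j\}$ is the cover associated to $\g$), but this is irrelevant for the existence statement: all that is used is that each $f_i g_j$ lies in $C^\infty(M)$ and is non-negative, so $T_m(f_i g_j)$ makes sense and is a positive operator. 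Thus the proof reduces to the two short verifications above, and no Floer theory or symplectic input is needed for this particular statement.
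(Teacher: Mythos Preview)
Your proof is correct and is essentially identical to the paper's own argument: the paper defines $D^{(m)}_{ij}:=T_m(f_ig_j)$, checks positivity via (BT2), normalization via (BT1) and linearity, and verifies the marginals exactly as you do. Your additional remark framing this as a double smearing of the Berezin-Toeplitz POVM $G_m$ is a nice conceptual gloss but does not change the substance of the proof.
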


\medskip
\noindent Let us emphasize that a joint observable is not unique
and does not necessarily has a classical counterpart.

\medskip
\noindent
\begin{proof}
Put $D^{(m)}_{ij}:= T_m(f_ig_j)$.
Note that since $f_ig_j \geq 0$ these operators are positive. Since $\sum f_ig_j=1$
and $T_m$ is linear with $T_m(1)=\id$, these operators sum up to $\id$. Thus $\{D^{(m)}_{ij}\}$
form a POVM, say $D^{(m)}$ on $\Omega$. Let us check that its marginals are precisely $A^{(m)}$ and $B^{(m)}$.
Indeed, since $\sum_i f_ig_j = g_j$ we get (again by linearity of $T_m$) that
$$\sum_i D^{(m)}_{ij}= T_m(g_j) = B^{(m)}_j\;,$$
and similarly for $A^{(m)}$.
\end{proof}

\medskip

Let
$$\mu_T (\f,\g) = \inf_{C^{(m)}} \liminf_{m \to \infty} m \cdot \cN_{in}( C^{(m)})\;,$$
where the infimum is taken over all joint observables $C^{(m)}$ of $A^{(m)}$ and $B^{(m)}$.
Put $\f \cdot \g := \{f_ig_j\}$. Since $\{T_m(f_ig_j)\}$ provides such a joint observable,
$\mu_T(\f,\g) \leq \mu_T(\f \cdot \g) < \infty$.

\medskip
\noindent
\begin{defin} \label{def-noise-indicator-joint} {\rm Let $\cU=\{U_1,...,U_L\}$
and $\cV =\{V_1,...,V_N\}$ be a pair of finite open covers of a closed quantizable symplectic manifold $(M,\omega)$. The {\it noise indicator} of the pair $\cU,\cV$ is defined as
$$\mu(\cU,\cV):= \inf_{\f,\g, T} \mu_T(\f,\g)\;,$$
where the infimum is taken over all partitions of unity $\f$ and $\g$ subordinated to $\cU$ and $\cV$
respectively and over all schemes $T_m$ of the Berezin-Toeplitz quantization.}
\end{defin}

\medskip
\noindent
Observe that the POVM $A_{ij} = A_i \cdot \delta_{ij}$, where $\delta_{ij}$ is the Kronecker delta,
is a joint observable for two copies of a POVM $A=\{A_i\}$. Therefore
\begin{equation}\label{eq-two-noises}
\mu(\cU,\cU) \leq \mu(\cU)\;.
\end{equation}

\subsection{The Poisson bracket invariants}
The objective of the present paper is to find lower bounds on the noise indicators
in terms of symplectic geometry and topology of the covers. The first step in this direction
is provided by the correspondence principle (BT4).

For a partition of unity $\f=\{f_i\}$, $i=1,...,L$
introduce the magnitude of its Poisson non-commutativity
$$\nu_c(\f):=  \max_{x \in K_L, y\in K_L} ||\{\sum_i x_i f_i,\sum_i y_if_i\}||\;,$$
where as above $K_L$ stands for the cube $[-1;1]^L \subset \R^L$ (cf. \eqref{eq-noncom} above; the subindex
$c$ stands for classical). Given an open cover
$\cU=\{U_1,...,U_L\}$ of $M$, define the Poisson bracket invariant
$pb(\cU) = \inf \nu_c(\f)$, where the infimum is taken over all partitions of unity $\f$
subordinated to $\cU$.

Similarly, given a pair of partitions of unity $\f= \{f_i\}$, $i=1,...,L$ and $\g=\{g_j\}$,
$j=1,...,N$, put
$$\nu_c(\f,\g):=  \max_{x \in K_L, y\in K_N} ||\{\sum_i x_i f_i,\sum_j y_j g_j\}||\;.$$
Introduce the Poisson bracket invariant of two open covers $\cU$ and $\cV$:
\begin{equation}
\label{eq-pb}
pb(\cU,\cV) = \inf_{\f,\g}\max\Big{(}\nu_c(\f),\nu_c(\g),\nu_c(\f,\g)\Big{)} \;,
\end{equation}
where the infimum is taken over all partitions of unity subordinated to our covers.

\noindent
Note that $pb(\cU,\cV) \geq \max(pb(\cU),pb(\cV))$.
On the other hand, taking $\f=\g$ in the above definition, we get that $pb(\cU,\cU) \leq pb(\cU)$.
Therefore
\begin{equation}
\label{eq-pb-single-joint}
pb(\cU,\cU) = pb(\cU)\;.
\end{equation}

\medskip
\noindent The next result, which provides a lower bound on the noise indicator in terms of the Poisson
bracket invariants, serves as a bridge between quantum measurements and symplectic geometry.

\medskip
\noindent
\begin{thm}\label{thm-noisebound}
\begin{itemize}\item[{(i)}] For every finite open cover $\cU$ of $M$
\begin{equation}\label{eq-mu-pb-single}
\mu(\cU) \geq \frac{1}{2}\cdot pb(\cU)\;.
\end{equation}
\item[{(ii)}] For every pair of finite open covers $\cU$ and $\cV$ of $M$
\begin{equation}\label{eq-mu-pb-joint}
\mu(\cU,\cV) \geq \frac{1}{2}\cdot pb(\cU,\cV)\;.
\end{equation}
\end{itemize}
\end{thm}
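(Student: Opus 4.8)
The plan is to deduce both inequalities of Theorem \ref{thm-noisebound} from the unsharpness principles (Theorem \ref{thm-unsharpness} and Proposition \ref{prop-joint-unsharpness}) by passing to the classical limit via the correspondence principle (BT4), in the form of the estimate \eqref{eq-pbquantbound}. The point is that for a POVM $A^{(m)}=\{T_m(f_i)\}$ built from a partition of unity $\f$ subordinated to $\cU$, the quantum magnitude of non-commutativity $\nu_q(A^{(m)})$ is, up to $O(1/m^2)$, equal to $\nu_c(\f)/m$, so that $m\cdot\cN_{in}(A^{(m)})$ is bounded below (asymptotically) by $\tfrac12\nu_c(\f)$.

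For part (i): fix an arbitrary scheme $T$ of the Berezin–Toeplitz quantization and an arbitrary partition of unity $\f=\{f_i\}$ subordinated to $\cU$. First I would observe that for any $x,y\in K_L$ the operators $A^{(m)}(x)=\sum_i x_iT_m(f_i)=T_m(\sum_i x_if_i)$ and $A^{(m)}(y)=T_m(\sum_i y_if_i)$ are Toeplitz operators of the functions $\sum_i x_if_i,\sum_j y_jf_j\in C^\infty(M)$, and these functions range over a compact subset of $C^\infty(M)$ as $(x,y)$ ranges over $K_L\times K_L$. Applying \eqref{eq-pbquantbound} to this pair of functions and taking the maximum over $(x,y)$, I get
$$m\cdot\nu_q(A^{(m)}) = \max_{x,y\in K_L}\big\|\,m[A^{(m)}(x),A^{(m)}(y)]\,\big\|_{op} = \nu_c(\f) + O(1/m),$$
where the $O(1/m)$ is uniform in $(x,y)$ precisely because of the compactness just noted. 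Now the unsharpness principle \eqref{eq-unshprsingle} gives $\cN_{in}(A^{(m)})\geq\tfrac12\nu_q(A^{(m)})$, hence $m\cdot\cN_{in}(A^{(m)})\geq\tfrac12\,m\cdot\nu_q(A^{(m)})=\tfrac12\nu_c(\f)+O(1/m)$. Taking $\liminf_{m\to\infty}$ yields $\mu_T(\f)\geq\tfrac12\nu_c(\f)\geq\tfrac12\,pb(\cU)$, and then taking the infimum over $\f$ and $T$ gives \eqref{eq-mu-pb-single}.

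For part (ii): fix $T$ and partitions of unity $\f,\g$ subordinated to $\cU,\cV$, and let $C^{(m)}$ be any joint observable for $A^{(m)}=\{T_m(f_i)\}$ and $B^{(m)}=\{T_m(g_j)\}$. By Proposition \ref{prop-joint-unsharpness},
$$\cN_{in}(C^{(m)})\geq\tfrac12\max\big(\nu_q(A^{(m)}),\nu_q(B^{(m)}),\nu_q(A^{(m)},B^{(m)})\big).$$
The same correspondence-principle computation as above — now applied to the pairs of functions $\sum_i x_if_i,\sum_j y_jg_j$ (for the mutual term) and to the single partitions (for the first two terms) — shows that $m\cdot\nu_q(A^{(m)})=\nu_c(\f)+O(1/m)$, $m\cdot\nu_q(B^{(m)})=\nu_c(\g)+O(1/m)$, and $m\cdot\nu_q(A^{(m)},B^{(m)})=\nu_c(\f,\g)+O(1/m)$. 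Multiplying the displayed inequality by $m$, taking $\liminf_{m\to\infty}$, and then the infimum over all joint observables $C^{(m)}$, I obtain $\mu_T(\f,\g)\geq\tfrac12\max(\nu_c(\f),\nu_c(\g),\nu_c(\f,\g))\geq\tfrac12\,pb(\cU,\cV)$; infimizing over $\f,\g,T$ gives \eqref{eq-mu-pb-joint}.

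The only genuinely delicate point — and the one I would be careful to state explicitly — is the uniformity of the error term: the bound \eqref{eq-pbquantbound} is stated pointwise in $(f,g)$ with an $O(1/m)$ that a priori depends on $f,g$, whereas here I need it uniform as $(x,y)$ runs over the compact cube $K_L\times K_L$ (equivalently, as the pair of functions runs over a fixed compact family). This uniformity does hold, because the functions $\sum x_if_i$ lie in the finite-dimensional space $\mathrm{span}\{f_1,\dots,f_L\}$ and the constants in the Berezin–Toeplitz axioms (BT3)–(BT5) are locally uniform on $C^\infty(M)$; but it is worth pinning down, since without it the passage from the pointwise estimate to the estimate for $\nu_q$ is not automatic. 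Everything else is a straightforward combination of the unsharpness principles with the already-established bookkeeping.
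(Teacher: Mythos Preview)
Your proof is correct and follows essentially the same route as the paper's: both combine the unsharpness principle (Theorem \ref{thm-unsharpness} / Proposition \ref{prop-joint-unsharpness}) with the correspondence principle in the form \eqref{eq-pbquantbound} to get $m\cdot\nu_q(A^{(m)}) = \nu_c(\f) + O(1/m)$, and then pass to the limit. Your explicit attention to the uniformity of the $O(1/m)$ term over $K_L\times K_L$ is a point the paper glosses over; the cleanest justification is that, by linearity of $T_m$, the error $im[T_m(F_x),T_m(F_y)]-T_m(\{F_x,F_y\})$ is bilinear in $(x,y)$, so its norm on the cube is controlled by the finitely many entries $im[T_m(f_i),T_m(f_j)]-T_m(\{f_i,f_j\})$, each of which is $O(1/m)$ by (BT4).
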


\medskip
\noindent
\begin{proof} We shall prove \eqref{eq-mu-pb-joint} (the proof of \eqref{eq-mu-pb-single}
is completely analogous). Let $\cU$ and $\cV$ be a pair of  open covers
of a closed symplectic manifold $(M,\omega)$. Let $\f=\{f_i\}$, $\g=\{g_j\}$ be any partitions of unity subordinated to $\cU$ and $\cV$ respectively. Let $C^{(m)}$ be  any $\cL(H_m)$-valued POVM providing a joint measurement
for  $A^{(m)} =\{T_m(f_i)\}$ and $B^{(m)}=\{T_m(g_j)\}$. If $pb(\cU,\cV)=0$, \eqref{eq-mu-pb-joint}  follows automatically. Otherwise, take any  positive number
$p < pb(\cU,\cV)$. Observe that by \eqref{eq-pbquantbound}
$$\max \Big{(}\nu_q(A^{(m)}),\nu_q(B^{(m)}),\nu_q(A^{(m)},B^{(m)})\Big{)}$$
$$=\frac{1}{m}\cdot \max (\nu_c(\f),\nu_c(\g),\nu_c(\f,\g)) + O(1/m^2) \geq p/m$$
for all sufficiently large $m$. Hence $\cN_{in}(C^{(m)}) \geq p/(2m)$
for all sufficiently large $m$ by unsharpness principle \eqref{eq-joint}.
\end{proof}

\section{Poisson bracket invariants of fine covers}\label{sec-mainresults}

\subsection{Basic properties of the Poisson bracket invariants}
Recall that $\Omega_L$ stands for the finite set $\{1,...,L\}$.
Let $\cU= \{U_1,...,U_L\}$ and $\cW=\{W_1,...,W_K\}$ be two open covers of
$M$. The cover $\cW$ is called a {\it refinement} of $\cU$ if there exists
a map $\phi: \Omega_K \to \Omega_L$ such that $W_i \subset U_{\phi(i)}$ for all
$i \in \Omega_K$.

\medskip
\noindent
\begin{prop} \label{prop-refinements}
$pb(\cU) \leq pb(\cW)$ whenever $\cW$ is a refinement of $U$.
\end{prop}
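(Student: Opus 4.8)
The plan is to show that any partition of unity subordinated to the coarser cover $\cU$ can be ``pulled back'' along the refinement map to a partition of unity subordinated to the finer cover $\cW$, in such a way that the magnitude of Poisson non-commutativity does not increase. Taking the infimum over all such partitions then yields $pb(\cU) \le pb(\cW)$.

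First I would fix the refinement map $\phi: \Omega_K \to \Omega_L$ with $W_i \subset U_{\phi(i)}$, and also fix a partition of unity $\g = \{g_1,\dots,g_K\}$ subordinated to $\cW$ with $\nu_c(\g)$ arbitrarily close to $pb(\cW)$. From $\g$ I would build a partition of unity $\f = \{f_1,\dots,f_L\}$ subordinated to $\cU$ by pushing forward: set $f_\alpha := \sum_{i : \phi(i) = \alpha} g_i$ for $\alpha \in \Omega_L$. Each $f_\alpha$ is smooth and non-negative; its support is contained in $\bigcup_{\phi(i)=\alpha} \overline{\{g_i \ne 0\}} \subset \bigcup_{\phi(i)=\alpha} W_i \subset U_\alpha$; and $\sum_\alpha f_\alpha = \sum_i g_i = 1$. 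So $\f$ is indeed subordinated to $\cU$.

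The key point is then to compare $\nu_c(\f)$ with $\nu_c(\g)$. For any $x \in K_L$, the function $\sum_\alpha x_\alpha f_\alpha = \sum_\alpha x_\alpha \sum_{\phi(i)=\alpha} g_i = \sum_i x_{\phi(i)} g_i = \sum_i \tilde x_i g_i$, where $\tilde x_i := x_{\phi(i)}$. Since $|x_\alpha| \le 1$ for all $\alpha$, we have $\tilde x = (\tilde x_i) \in K_K$. Hence every function of the form $\sum_\alpha x_\alpha f_\alpha$ with $x \in K_L$ is also of the form $\sum_i \tilde x_i g_i$ with $\tilde x \in K_K$. Therefore
$$\nu_c(\f) = \max_{x,y \in K_L} \|\{\textstyle\sum_\alpha x_\alpha f_\alpha, \sum_\alpha y_\alpha f_\alpha\}\| \le \max_{\tilde x, \tilde y \in K_K} \|\{\textstyle\sum_i \tilde x_i g_i, \sum_i \tilde y_i g_i\}\| = \nu_c(\g).$$
Consequently $pb(\cU) = \inf_{\f'} \nu_c(\f') \le \nu_c(\f) \le \nu_c(\g)$, and letting $\nu_c(\g) \to pb(\cW)$ gives the claim.

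I do not expect a serious obstacle here; the only points requiring care are the support computation (using that a finite union of closed sets is closed, so the support of $f_\alpha$ is contained in the union of the supports of the $g_i$ with $\phi(i) = \alpha$) and the observation that pulling back indices via $\phi$ keeps coefficient vectors inside the unit cube. An analogous argument would also handle the joint invariant $pb(\cU,\cV)$ if needed, but the statement as given only concerns the single-cover invariant.
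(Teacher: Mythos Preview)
Your proof is correct and follows essentially the same approach as the paper: push forward a partition of unity $\g$ subordinated to $\cW$ along $\phi$ to obtain $f_\alpha = \sum_{\phi(i)=\alpha} g_i$ subordinated to $\cU$, observe that every combination $\sum_\alpha x_\alpha f_\alpha$ with $x\in K_L$ is of the form $\sum_i \tilde x_i g_i$ with $\tilde x\in K_K$, and conclude $\nu_c(\f)\le\nu_c(\g)$. The paper's argument is slightly terser (it works with an arbitrary $\g$ rather than a minimizing sequence), but the content is identical.
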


\begin{proof}
Let $\g = \{g_1,...,g_K\}$ be any partition of unity subordinated to $\cW$.
For $l \in \Omega_L$ put
$$f_l = \sum_{i \in \phi^{-1}(l)} g_i\;.$$
The collection of functions $\f = \{f_l\}$ is a partition of unity subordinated to $\cU$.
Further, given $x \in K_L=[-1,1]^L$, the function
$\sum x_lf_l$ is a linear combination of $g_i$'s with coefficients from $[-1,1]$.
Thus $\nu_c(\f) \leq \nu_c(\g)$. Since $pb(\cU)\leq  \nu_c(\f)$, we get that
$pb(\cU) \leq \nu_c(\g)$ for every $\g$. Therefore $pb(\cU) \leq pb(\cW)$.
\end{proof}

\medskip
\noindent
For two open covers $\cU$ and $\cV$ denote by $\cU \cdot \cV$ the cover $\{U_i \cap V_j\}$.

\medskip
\noindent
\begin{prop}\label{pb-intersection}
$$ \max (pb(\cU), pb(\cV)) \leq pb(\cU \cdot \cV) \leq 4pb(\cU,\cV)\;.$$
\end{prop}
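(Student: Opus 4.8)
The plan is to establish the two inequalities separately. For the left inequality $\max(pb(\cU), pb(\cV)) \leq pb(\cU \cdot \cV)$, I observe that $\cU \cdot \cV = \{U_i \cap V_j\}$ is a refinement of both $\cU$ (via the map $(i,j) \mapsto i$, since $U_i \cap V_j \subset U_i$) and $\cV$ (via $(i,j) \mapsto j$). Hence Proposition \ref{prop-refinements} immediately gives $pb(\cU) \leq pb(\cU \cdot \cV)$ and $pb(\cV) \leq pb(\cU \cdot \cV)$, which is the claim. This part is essentially free.

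The right inequality $pb(\cU \cdot \cV) \leq 4\, pb(\cU, \cV)$ is the substantive one. Given partitions of unity $\f = \{f_i\}$ subordinated to $\cU$ and $\g = \{g_j\}$ subordinated to $\cV$, the natural candidate partition of unity subordinated to $\cU \cdot \cV$ is the product $\f \cdot \g = \{f_i g_j\}$, since $\mathrm{supp}(f_i g_j) \subset U_i \cap V_j$ and $\sum_{i,j} f_i g_j = (\sum_i f_i)(\sum_j g_j) = 1$. So it suffices to bound $\nu_c(\f \cdot \g)$ in terms of $\max(\nu_c(\f), \nu_c(\g), \nu_c(\f,\g))$. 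For $x = (x_{ij}) \in K_{LN}$ I need to control $\|\{\sum_{ij} x_{ij} f_i g_j, \sum_{kl} y_{kl} f_k g_l\}\|$. The key computational step is the Leibniz rule for the Poisson bracket: writing $u = \sum_{ij} x_{ij} f_i g_j$, one expands $\{fg, \cdot\} = f\{g,\cdot\} + g\{f,\cdot\}$ and regroups terms. The idea is that $\sum_{ij} x_{ij} f_i g_j$ can be rewritten, using the partition-of-unity identities $\sum_i f_i = 1$ and $\sum_j g_j = 1$, so that its Poisson bracket with another such function decomposes into a sum of brackets of the form $\{\sum x_i f_i, \sum y_j f_j\}$, $\{\sum x_j g_j, \sum y_j g_j\}$, and $\{\sum x_i f_i, \sum y_j g_j\}$, each with coefficients in $[-1,1]$, picking up a bounded number of such terms — the factor $4$ reflecting the four cross-terms from differentiating a product of two products.

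I expect the main obstacle to be the bookkeeping in this expansion: one must carefully choose how to apply Leibniz and how to absorb the functions $f_i, g_j$ (which take values in $[0,1]$ and sum to $1$) so that at each stage the ``frozen'' factors form a convex combination and hence contribute a factor at most $1$ in operator/uniform norm, while the differentiated factors reassemble into expressions of the type appearing in $\nu_c(\f)$, $\nu_c(\g)$, $\nu_c(\f,\g)$ with coefficients still in $[-1,1]$. Concretely, fixing $j$ and $l$ the inner sum $\sum_i x_{ij} f_i$ has coefficients in $[-1,1]$, and bilinearity of $\{\cdot,\cdot\}$ together with $\sum_j g_j = 1$ lets one bound contributions; the delicate point is that the cross-terms mixing an $f$-bracket with a $g$-bracket must each be estimated by $\nu_c(\f,\g)$ after checking the coefficient vectors indeed lie in the cubes. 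Once this is done one concludes $\nu_c(\f \cdot \g) \leq 4 \max(\nu_c(\f), \nu_c(\g), \nu_c(\f,\g))$, and taking the infimum over all $\f, \g$ subordinated to $\cU, \cV$ gives $pb(\cU \cdot \cV) \leq 4\, pb(\cU, \cV)$.
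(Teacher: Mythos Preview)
Your proposal is correct and follows essentially the same approach as the paper: the left inequality via Proposition~\ref{prop-refinements}, and the right via the product partition $\f\cdot\g$, the Leibniz expansion $\{f_ig_j,f_kg_l\}=f_if_k\{g_j,g_l\}+g_jg_l\{f_i,f_k\}+f_ig_l\{g_j,f_k\}+g_jf_k\{f_i,g_l\}$, and the observation that each of the four resulting sums is a convex combination (via $\sum f_i=\sum g_j=1$) of brackets bounded by $\nu_c(\f)$, $\nu_c(\g)$, or $\nu_c(\f,\g)$. The bookkeeping you flag as the main obstacle is exactly what the paper carries out, introducing the auxiliary functions $F^x_j=\sum_i x_{ij}f_i$, $G^x_i=\sum_j x_{ij}g_j$, etc., to make the regrouping explicit.
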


\begin{proof}
The inequality on the left immediately follows from Proposition \ref{prop-refinements}.
Let us prove the inequality on the right.

Take any partitions of unity $\f=\{f_i\}$ and $\g=\{g_j\}$ subordinated to coverings
$\cU$ and $\cV$ respectively. Put $$a:= \max\Big{(}\nu_c(\f),\nu_c(\g),\nu_c(\f,\g)\Big{)}\;.$$
Look at the partition of unity $\f \cdot \g= \{f_ig_j\}$ subordinated to the cover $\{U_i \cap V_j\}$ of $M$.
Note that
\begin{equation}\label{eq-brack-prod}
\{f_ig_j,f_kg_l\} = f_if_k\{g_j,g_l\} + g_jg_l \{f_i,f_k\} + f_ig_l\{g_j,f_k\} + g_jf_k\{f_i,g_l\}\;.
\end{equation}
Choose arbitrary weights $x_{ij}$ and $y_{kl}$ in $[-1;1]$. Put
$$F^x_j= \sum_i x_{ij}f_i\;, F^y_l = \sum_k y_{kl}f_k\;, G^x_i=\sum_j x_{ij}g_j\;, G^y_k = \sum_l y_{kl}g_l\;.$$
Fix a point $z \in M$ and put
$$I(z) = \Big{|} \Big{\{} \sum_{i,j}x_{ij}f_ig_j, \sum_{k,l}y_{kl} f_kg_l \Big{\}}(z) \Big{|}\;.$$
Calculating with the help of \eqref{eq-brack-prod} and estimating, we get that
$$I(z) \leq I_1(z) + I_2(z)+I_3(z)+I_4(z)\;,$$
where
$$I_1(z) = \sum_{i,k} f_i(z)f_k(z)|\{G^x_i,G^y_k\}(z)|\;\;,\;\; I_2(z)= \sum_{j,l} g_j(z)g_l(z)|\{F^x_j,F^y_l\}(z)|\;,$$
$$I_3(z) = \sum_{i,l} f_i(z)g_l(z) |\{G^x_i,F^y_l\}(z)|\;\;,\;\;
I_4(z) = \sum_{j,k}g_j(z)f_k(z)|\{F^x_j,G^y_k\}(z)|\;.$$
Observe that
$$|\{G^x_i,G^y_k\}(z)| \leq \nu_c(\{g_j\}) \leq a\;.$$
Thus
$$I_1(z) \leq a \cdot \sum_{i,k} f_i(z)f_k(z) = a \cdot \sum_i f_i(z) \cdot \sum_k f_k(z) =a\;.$$
Similarly,
$I_\alpha (z) \leq a$ for $\alpha=2,3,4$ and hence $I(z) \leq 4a$. Therefore
$$\nu_c(\f \cdot \g) \leq 4a\;.$$ This yields
$pb(\cU \cdot \cV) \leq 4pb(\cU,\cV)$, as required.
\end{proof}

\subsection{Small scales in symplectic geometry}
Recall that a {\it Hamiltonian diffeomorphism} is a time one map $\phi_H$ of a Hamiltonian flow on $M$ generated by a (in general, time dependent) Hamiltonian $H(z,t)$ on $M$. Hamiltonian diffeomorphisms
form a group denoted by $\Ham (M)$. Given a Hamiltonian diffeomorphism $\phi \in \Ham (M)$,
define its Hofer's norm \cite{Hofer} by
$$||\phi||_{Hofer} = \inf_{H\;:\:\phi_H=\phi} \int_0^1 \Big{(}\max_z H(z,t) -\min_z H(z,t) \Big{)}\cdot dt\;.$$
Here the infimum is taken over all Hamiltonians $H$ generating $\phi$, and hence Hofer's norm of $\phi$ measures the minimal possible amount of energy required in order to generate $\phi$.

A subset $Z \subset M$ is called {\it displaceable} if there exists a Hamiltonian diffeomorphism $\phi$ of $M$ so that
\begin{equation}\label{eq-displ}
\phi(Z) \cap \text{Closure}(Z) = \emptyset.
\end{equation}
For a displaceable subset $Z$ define its {\it displacement energy} $$E(Z)= \inf ||\phi||_{Hofer}\;,$$
where the infimum is taken over all $\phi \in \Ham(M)$ satisfying \eqref{eq-displ}.
We put $E(Z)=\infty$ if $Z$ is non-displaceable. We refer to \cite{Pbook} for an introduction to Hofer's geometry. The sets of displacement energy $\leq r$, $r \to 0$ provide a natural family of small scales in symplectic geometry.

In what follows, we shall need also relative versions of the above notions. Let $U \subset M$ be an
open subset. Denote by $\Ham(U)$ the subgroup of $\Ham(M)$ generated by Hamiltonian diffeomorphisms
$H(z,t)$ supported in $U \times [0;1]$. We say that $Z \subset U$ is {\it displaceable in $U$ }if
\eqref{eq-displ} holds for some $\phi \in \Ham(U)$, and we define the {\it relative displacement energy}
$E(Z,U)$ as the infimum of Hofer's norms of such $\phi$.

\subsection{Liouville domains} The following class of subsets will play an important role below. Let $\overline{U} \subset M^{2n}$ be a closed $2n$-dimensional submanifold with the interior $U$ and the boundary
$\partial U$.

\medskip
\noindent {\bf Topological assumption:} In what follows we assume that all connected components of $\overline{U}$ are simply connected. Let us emphasize that $\overline{U}$ is not assumed to be connected.

\medskip
\noindent
We say that $\overline{U}$ (resp. $U$) is {\it a closed (resp. open) Liouville domain} \cite{Eliashberg-Gromov}  if there exists a vector field $\xi$ on $\overline{U}$ which is transversal to the boundary $\partial U$ and which satisfies $L_\xi \omega = \omega$, where $L$ stands for the Lie derivative.  The vector field $\xi$ is called a Liouville vector field.

Note that at the boundary the field $\xi$ points outward. Therefore the flow $R_\tau: \overline{U} \to \overline{U}$ of $\xi$ is well defined for all non-positive times $\tau \leq 0$. This flow dilates the symplectic form: $R_\tau^* \omega = e^{\tau}\omega$.

Let $\xi$ and $\eta$ be two Liouville vector fields, and $R_\tau,T_\tau$, $\tau \leq 0$ be their flows.
Fix $\tau < 0$ and put $V= R_\tau(\overline{U})$ and $W= T_\tau(\overline{U})$. Observe that
$T_{-t}(R_{-t})^{-1}(V)$, $t \in [0, -\tau]$ is a symplectic isotopy which takes $V$ to $W$.
Since all connected components of $\overline{U}$ are simply connected, this isotopy is Hamiltonian.  By the Hamiltonian
isotopy extension theorem, this isotopy extends to a Hamiltonian diffeomorphism of $U$.
Since the displacement energy is invariant under Hamiltonian diffeomorphisms, the quantity $E(R_tU,U)$ does not depend on the specific choice of the Liouville field $\xi$.

In what follows it will be useful to rescale the variable $\tau$ and to adopt the following notation:
For $s \in (0,1]$ put $sU = R_{\log s}(U)$.

Observe that $s_1U \subset s_2U$ for $s_1 < s_2$.
The closed set $Q:= \bigcap_{s \in (0,1]} sU$ is called {\it the core} of $U$. We say that
the Liouville domain is {\it portable} (cf. \cite{BIP}) if $Q$ is displaceable in $U$.

For a portable Liouville domain $U$ define a function
$F_U(s) = E(sU,U)/s$. It is defined on $(0,1]$ and takes values in $(0,+\infty]$.
By the discussion above, $F_U$ is independent on the Liouville vector field.

Observe that $F_U(s)$ is finite for $s < s_0$ for some $s_0 >0$. Further,
$E(stU,U) \leq E(stU,tU) = tE(sU,U)$, where the inequality follows from the definitions,
and the equality from the fact that the pair $(U,sU)$ is conformally symplectomorphic to
$(tU,stU)$. Dividing by $st$, we get that $F_U(s)$ is non-decreasing on $(0,s_0]$.
Define the {\it portability number} of $U$ by
\begin{equation}\label{eq-portability-number}
\chi(U,\omega)= \inf F_U = \lim_{s\to 0} F_U(s)\;.
\end{equation}
Let us emphasize that the portability number is an intrinsic invariant of $(U,\omega)$: it does not
depend on the symplectic embedding $U \to M$. We put $\chi(U,\omega)=\infty$ if $U$ is not portable
and abbreviate $\chi(U)$ when the symplectic form $\omega$ is clear from the context.

\medskip
\noindent
\begin{exam}\label{exam-portable-1} {\rm Let $B^{2n}(r)$ be a closed Euclidean
ball of radius $r$ centered at $0$ in the linear symplectic space $(\R^{2n}, dp \wedge dq)$. This is a Liouville domain with the Liouville vector field $\xi = (p\partial/\partial p + q\partial/\partial q)/2$.
Its core coincides with the origin, and hence $U$ is portable. Observe that in our notations
$sB^{2n}(r) = B^{2n}(r\sqrt{s})$. For $s$ small enough, the displacement energy of  $B^{2n}(r\sqrt{s})$ in $B^{2n}(r)$ equals $\pi(r\sqrt{s})^2$ \cite{MS} and hence $\chi(B^{2n}(r))= \pi r^2$. }
\end{exam}

\begin{exam}\label{exam-starshaped}{\rm
More generally, every star-shaped bounded domain with smooth boundary in $\R^{2n}$ is portable Liouville. This yields that given a closed symplectic manifold $(M,\omega)$ equipped with a Riemannian metric, all metric balls of a sufficiently small radius are portable Liouville.
}\end{exam}

\medskip

Observe that the disjoint union of two closed Liouville domains is again a closed Liouville domain.
The following property of the portability number will be useful for our purposes.
Let $\overline{U}$ and $\overline{V}$ be a pair of disjoint closed Liouville domains. Since
$F_{U \sqcup V} = \max (F_U,F_V)$,
\begin{equation}\label{eq-sum-chi}
\chi(U \sqcup V)= \max(\chi(U),\chi(V))\;.
\end{equation}

\subsection{Main theorems on fine regular covers}\label{subsec-fine-regular}
We write $\overline{U}$ for the closure of a subset $U$.
Let $\cU=\{U_1,...,U_L\}$ be an open cover of a closed symplectic manifold
$(M,\omega)$. We say that {\it the degree} of $\cU$ is $\leq d$ if every subset $\overline{U}_j$
intersects closures of  at most $d$ other subsets from the cover. For a subset $X \subset M$
define its {\it star} $St(X)$ as the union of all $U_i$'s with $\overline{U}_i \cap \overline{X} \neq \emptyset$. The $p$ times iterated
star (for brevity, $p$-star) $St(...(St(X)...)$ is denoted by $St_p(X)$. Put
$$E_p(\cU) = \max_{i} E(U_i, St_p(U_i))\;.$$  We say that the cover $\cU$ is {\it $(d,p)$-regular} if the degree of $\cU$ is $\leq d$ and $E_p(\cU) <\infty$. The latter condition means that every subset $U_j$ is displaceable in its $p$-star. Let us emphasize that the notion of $(d,p)$-regularity and as well as the
quantity $E_p$ is invariant under symplectomorphisms of $M$.

\medskip
\noindent
\begin{exam}\label{exam-dpregular-greedy}{\rm
Let $\cU$ be a greedy cover responsible for the phase space discretization into Riemannian balls of radius $r$, see Example \ref{exam-greedy} above. For all $0< r \leq r_0$ this cover is $(d,p)$-regular with
$E_p(\cU) \leq \kappa r^2$, where  the constants $r_0,d,p$ and $\kappa$ depend only on the symplectic manifold $(M,\omega)$ and the Riemannian metric $\rho$. The proof will be given in Section \ref{sec-dpregular-greedy}.}
\end{exam}

\medskip
\noindent
The moral of this example is that for some positive integers $d,p$ the following holds true:
for every $\epsilon >0$ every finite open cover $\cV$ of $M$ admits a $(d,p)$-regular refinement
with $E_p < \epsilon$. In view of this we shall refer to $E_p(\cU)$ as the magnitude of localization
of $\cU$. The magnitude of localization could be arbitrarily small while the parameters
$d,p$ remain bounded.

An important property of $(d,p)$-regular covers is as follows.

\medskip
\noindent\begin{prop}\label{prop-dpregular-refinement}
Every $(d,p)$-regular cover $\cU$ is a refinement of a cover $\cW=\{W_1,...,W_N\}$
where $N$ depends only on $d,p$ and $E(W_j) \leq E_p(\cU)$ for all $j$.
\end{prop}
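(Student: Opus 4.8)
The plan is to produce the coarser cover $\cW$ by a \emph{coloring} argument. Since $\cU=\{U_1,\dots,U_L\}$ is $(d,p)$-regular, its degree is $\leq d$; form the graph $\Gamma$ on vertex set $\{1,\dots,L\}$ with an edge between $i$ and $j$ whenever $\overline{U}_i\cap\overline{U}_j\neq\emptyset$. This graph has maximal vertex degree $\leq d$, so it is $(d+1)$-colorable by the greedy coloring algorithm. Actually, to make the $p$-star condition useful we should color with respect to a larger graph: declare $i\sim j$ if $\overline{U}_i$ and $\overline{U}_j$ can be joined by a chain of at most $2p$ consecutive overlapping closures (equivalently, if $\overline{U}_j\subset \overline{St_{2p}(U_i)}$, roughly speaking). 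Because the degree is bounded by $d$, each vertex has at most $D:=\sum_{k=1}^{2p} d^k$ neighbors in this enlarged graph, so it is $(D+1)$-colorable. Set $N:=D+1$, which depends only on $d$ and $p$ as required.

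Next I would assemble $\cW$ from the color classes. Let $S_1,\dots,S_N$ be the color classes of the coloring above, and put $W_j:=\bigcup_{i\in S_j}U_i$. Then $\{W_1,\dots,W_N\}$ covers $M$ (it is a union of the $U_i$'s), and $\cU$ is a refinement of $\cW$ via the map sending $i$ to its color; indeed $U_i\subset W_{c(i)}$. The key point is disjointness-at-a-distance: if $i,k\in S_j$ with $i\neq k$, then by construction $\overline{U}_i$ and $\overline{U}_k$ are \emph{not} joined by a chain of length $\leq 2p$, and a short combinatorial argument shows that consequently $St_p(U_i)$ and $St_p(U_k)$ are disjoint. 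Hence within a fixed color class $j$, the sets $\{U_i\}_{i\in S_j}$ have pairwise disjoint $p$-stars, and moreover each $U_i$ ($i\in S_j$) is displaceable \emph{inside} $St_p(U_i)$ by a Hamiltonian supported in $St_p(U_i)$ (this is exactly the finiteness of $E_p(\cU)$). Since these supports lie in pairwise disjoint open sets, one can take a disjoint union / simultaneous composition of the displacing diffeomorphisms to displace all of $W_j=\bigsqcup_{i\in S_j}U_i$ at once, with Hofer energy equal to $\max_{i\in S_j}E(U_i,St_p(U_i))\leq E_p(\cU)$. Therefore $E(W_j)\leq E_p(\cU)$ for every $j$.

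The main obstacle I anticipate is the bookkeeping in the claim that same-color sets have disjoint $p$-stars and that the displacing Hamiltonians can be run in parallel: one must choose the radius of the coloring graph large enough (the factor $2p$, or perhaps $2p+1$, rather than $p$) so that $\overline{St_p(U_i)}\cap\overline{St_p(U_k)}=\emptyset$ follows from $i\not\sim k$, and then verify that the ambient displacement of $W_j$ genuinely decomposes as a product of commuting pieces with disjoint support so the Hofer norms do not add. A secondary technical point is that $E(W_j)$ here is the \emph{absolute} displacement energy in $M$ (not relative to $St_p(W_j)$), which is why the estimate is an inequality $E(W_j)\le E_p(\cU)$ and not an equality — displacing inside the star is in particular displacing inside $M$. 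Both of these are routine once the coloring radius is fixed correctly, so the proof is essentially a graph-coloring argument glued to the elementary functoriality of Hofer's norm under disjoint unions.
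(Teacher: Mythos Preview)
Your proposal is correct and follows essentially the same route as the paper: the paper's proof applies Proposition~\ref{prop-colors} (which is exactly your ``enlarged graph'' coloring at radius $2p$) to obtain $N\le d^{2p}+1$ color classes whose $p$-stars are pairwise disjoint, then sets $W_\alpha=\bigsqcup_{i\in I_\alpha}U_i$ and observes $E(W_\alpha)\le E\big(\bigsqcup U_i,\bigsqcup St_p(U_i)\big)=\max_i E(U_i,St_p(U_i))\le E_p(\cU)$. Your hesitation about the coloring radius ($2p$ versus $2p+1$) is the only loose end, and it affects only the value of the constant $N(d,p)$, not the argument.
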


\medskip

Let us pass now to fine open covers $\cU$ by portable Liouville domains. In this case we
keep track of the degree of the cover and measure the magnitude of localization
with the help of the quantity $\chi(\cU):= \max_j \chi(U_j)$. Again, greedy covers of a sufficiently
small radius provide an example of a Liouville cover of bounded degree with an arbitrary small
magnitude of localization (see Example \ref{exam-starshaped} above).

\medskip
\noindent \begin{prop}\label{prop-Liouville-refinement}
Every degree $\leq d$ cover $\cU$ by portable Liouville domains is a refinement of a cover $\cW=\{W_1,...,W_N\}$ by portable Liouville domains, where $N \leq d+1$ and $\chi(\cU) = \chi(\cW)$.
\end{prop}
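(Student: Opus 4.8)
The plan is to produce the refining cover $\cW$ by a greedy colouring argument on the "intersection graph" of $\cU$. First I would form the graph $\mathcal{G}$ whose vertices are the indices $\{1,\dots,L\}$, with an edge joining $i$ and $j$ precisely when $\overline{U}_i \cap \overline{U}_j \neq \emptyset$; by the degree hypothesis every vertex of $\mathcal{G}$ has degree $\leq d$, so $\mathcal{G}$ is $(d+1)$-colourable. Fix such a colouring $c\colon \{1,\dots,L\}\to\{1,\dots,d+1\}$. For each colour $k$ set $W_k := \bigsqcup_{c(i)=k} U_i$. The key point is that two sets $U_i, U_j$ receiving the same colour satisfy $\overline{U}_i\cap\overline{U}_j=\emptyset$ (they are non-adjacent in $\mathcal{G}$), so this union is genuinely a disjoint union of portable Liouville domains whose closures are pairwise disjoint; hence $\overline{W}_k$ is again a closed Liouville domain (a disjoint union of such), and it is portable since each piece is. The map $\phi(i)=c(i)$ witnesses $U_i\subset W_{c(i)}$, so $\cU$ refines $\cW=\{W_1,\dots,W_{d+1}\}$, giving $N\leq d+1$.

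Next I would verify the portability-number equality $\chi(\cU)=\chi(\cW)$. For each colour $k$, iterated application of \eqref{eq-sum-chi} over the pieces of $W_k$ gives $\chi(W_k)=\max_{c(i)=k}\chi(U_i)$. Taking the maximum over $k$ yields $\chi(\cW)=\max_k\chi(W_k)=\max_i\chi(U_i)=\chi(\cU)$, as required. One should note that \eqref{eq-sum-chi} was stated for a union of \emph{two} disjoint closed Liouville domains, but the identity $F_{U\sqcup V}=\max(F_U,F_V)$ iterates verbatim to any finite disjoint union, since the relevant displacements in the enlarged domain can be performed independently on each component (the core of a disjoint union is the disjoint union of the cores, and a Hamiltonian supported near one component does not interact with the others).

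The one point that genuinely requires care — and is the main obstacle — is checking that $\overline{W}_k$, the disjoint union of the $\overline{U}_i$ with $c(i)=k$, legitimately qualifies as a "closed Liouville domain" in the sense used in the paper, including the standing topological assumption that all connected components are simply connected. Each $\overline{U}_i$ is assumed to be a simply connected closed Liouville domain, and the components of a disjoint union are exactly the components of the pieces, so simple connectivity is inherited; a Liouville vector field on $\overline{W}_k$ is obtained by taking the given Liouville field on each $\overline{U}_i$ separately, and transversality to $\partial W_k = \bigsqcup \partial U_i$ holds componentwise. Portability of $\overline{W}_k$ then follows because its core is the disjoint union of the cores $Q_i$ of the $U_i$, and each $Q_i$ is displaceable in $U_i\subset W_k$, so the core of $W_k$ is displaceable in $W_k$ (displace each $Q_i$ inside its own $U_i$). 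With this verified, the proposition follows.
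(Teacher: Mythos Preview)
Your proposal is correct and follows essentially the same route as the paper: colour the intersection graph with at most $d+1$ colours, let $W_k$ be the disjoint union of the $U_i$ of colour $k$, and invoke \eqref{eq-sum-chi} (iterated) to get $\chi(W_k)=\max_{c(i)=k}\chi(U_i)$. The paper's proof is terser, simply asserting that a disjoint union of Liouville domains is again Liouville with the indicated portability number; you spell out the verification of the Liouville, simple-connectivity and portability conditions more carefully, which is fine. One small wording slip: the standing assumption is that each \emph{connected component} of $\overline{U}_i$ is simply connected, not that $\overline{U}_i$ itself is simply connected, but your argument goes through unchanged since the components of $\overline{W}_k$ are exactly the components of the constituent $\overline{U}_i$'s.
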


\medskip
\noindent
The main result of the present section is as follows:

\medskip
\noindent \begin{thm}\label{thm-main-fine covers}
Let  $\cW=\{W_1,...,W_N\}$ be an open cover of a closed symplectic manifold $(M,\omega)$.
Assume that
\item[{(i)}] either all subsets $W_j$ are displaceable with $E(W_j)\leq \cA$.
\item[{(ii)}] or $\pi_2(M)=0$, and all subsets $W_j$ are Liouville with
$\chi(W_j) \leq \cA$.
Then
\begin{equation} \label{eq-pb-fine-displ}
pb(\cW) \geq C(N)\cA^{-1}\;,
\end{equation}
where the constant $C(N)$ depends only on $N$.
\end{thm}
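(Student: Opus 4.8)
The plan is to fix a partition of unity $\f=\{f_1,\dots,f_N\}$ subordinated to $\cW$, set $\nu:=\nu_c(\f)$, and prove $\nu\ge C(N)\cA^{-1}$; since $pb(\cW)=\inf_{\f}\nu_c(\f)$, this gives \eqref{eq-pb-fine-displ}. The elementary starting point is that for every subset $I\subset\{1,\dots,N\}$ the function $g_I:=\sum_{i\in I}f_i$ takes values in $[0,1]$, vanishes on a neighbourhood of $M\setminus\bigcup_{i\in I}W_i$, equals $1$ on a neighbourhood of $M\setminus\bigcup_{i\notin I}W_i$, and — being $\sum_i x_if_i$ with $x\in\{0,1\}^N\subset K_N$ — satisfies $\|\{g_I,g_J\}\|\le\nu$ for all $I,J$. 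Applying this to the chain $\emptyset=I_0\subset I_1\subset\dots\subset I_N=\{1,\dots,N\}$ with $I_k=\{1,\dots,k\}$ produces a flag $g_0\equiv 0,\ g_1,\ \dots,\ g_N\equiv 1$ whose consecutive increments $g_k-g_{k-1}=f_k$ are non-negative bumps supported in the displaceable set $W_k$ with $E(W_k)\le\cA$ (resp.\ in the Liouville domain $W_k$ with $\chi(W_k)\le\cA$), and whose pairwise Poisson brackets are all $\le\nu$. Thus the theorem is reduced to a purely symplectic assertion: a flag of this type, interpolating between the constants $0$ and $1$ through bumps supported in "small" sets, cannot have arbitrarily small Poisson brackets.

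To prove the symplectic assertion I would run a ``hard'' argument built from Floer theory, using the asymptotic spectral invariant $\zeta$ associated with the fundamental class $[M]$ (in the spirit of Entov--Polterovich and of the $pb_4$-estimates of \cite{BEP}), via three inputs: $\zeta$ is normalised so that $\zeta(0)=0$, $\zeta(1)=1$; $\zeta$ is almost insensitive to a function $h\in[0,1]$ supported in a set of displacement energy $\le\cA$ (adding $h$ changes $\zeta$ by at most an explicit multiple of $\cA$); and $\zeta$ is ``almost additive'' with a defect controlled by Poisson brackets. Telescoping $1=\zeta(g_N)-\zeta(g_0)=\sum_{k=1}^{N}\bigl(\zeta(g_k)-\zeta(g_{k-1})\bigr)$ along the flag and bounding each summand by the displaceability and Poisson inputs yields an inequality of the form $1\le N\cdot\Phi(\nu,\cA)$ with $\Phi$ of the right homogeneity, which after rescaling $\omega\mapsto\cA^{-1}\omega$ forces $\nu\ge C(N)\cA^{-1}$. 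Case (ii) is where the split occurs: a Liouville domain $W_k$ with $\chi(W_k)\le\cA$ need not be displaceable in $M$, so one first shrinks it along the Liouville flow to a displaceable core while keeping the relative displacement energy controlled by $\chi(W_k)$, and this relative-displacement reasoning is what requires $\pi_2(M)=0$, to rule out sphere bubbling in the underlying Floer theory.

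An alternative, probably cleaner, packaging bypasses $\zeta$ and invokes the lower bound on $pb_4$ directly. When $N\le 4$ one may choose subsets $I,J$ so that the four sets $M\setminus\bigcup_{i\in I\cup J}W_i$, $M\setminus\bigcup_{i\in I\cup J^c}W_i$, $M\setminus\bigcup_{i\in I^c\cup J}W_i$, $M\setminus\bigcup_{i\in I^c\cup J^c}W_i$ are each contained in a single $W_j$ (hence displaceable with energy $\le\cA$, since subsets of displaceable sets are displaceable with no larger energy), so a Floer-theoretic lower bound on $pb_4(U(I),U(I^c),U(J),U(J^c))$ of the form $c\cA^{-1}$ gives $\nu\ge\|\{g_I,g_J\}\|\ge c\cA^{-1}$ at once; for larger $N$ one iterates this along the flag, and the number of iterations is exactly what makes $C(N)$ depend on $N$. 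In either route the real obstacle is the same: extracting the \emph{sharp} $\cA^{-1}$-dependence — a direct appeal to the known quasi-state inequalities (cf.\ \cite{EPZ}) only bounds $pb(\cW)$ below by a manifold constant — which demands the quantitative, energy-sensitive versions of the vanishing and quasi-additivity properties of spectral invariants, plus the bookkeeping that keeps the loss in the telescoping dependent on $N$ alone; this is carried out in Section~\ref{sec-symptop}.
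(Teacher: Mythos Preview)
Your main approach is essentially the paper's: build the flag $g_k=f_1+\cdots+f_k$, telescope $1=\zeta(g_N)-\zeta(g_0)$, and bound each increment $\zeta(g_{k-1}+f_k)-\zeta(g_{k-1})$ using the partial quasi-state $\zeta$. Two points of precision are worth flagging. First, the paper does not rescale $\omega$; the correct homogeneity comes from the explicit Poisson bracket inequality (Proposition~\ref{prop-pbinequality})
\[
|\zeta(F+G)-\zeta(F)-\zeta(G)|\le\sqrt{2S(F,G)\,\|\{F,G\}\|}\,,
\]
where $S(F,G)=\sup_{s>0}\min(q(\phi_{sF}),q(\phi_{sG}))$ and the square root arises from balancing over $s$ in the scaling $F\mapsto sF$. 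Applying this with $F=g_{k-1}$, $G=f_k$ and the bound $q(\phi)\le 2\cA$ for $\phi\in\cG(W_k)$ gives $1\le N\sqrt{4\cA\,\nu}$ directly, i.e.\ $C(N)=1/(4N^2)$. Second, in case~(ii) the role of $\pi_2(M)=0$ is not sphere bubbling: it is used (Proposition~\ref{prop-Liouville-spectral}) to make the action of a closed orbit independent of the spanning disc, so that under the Liouville dilation $\phi\mapsto\phi^{(s)}$ the action spectrum scales linearly and the spectrality of $c$ forces $q(\phi^{(s)})=s\,q(\phi)$; this is what yields $q(\phi)\le 2\chi(W)$ without ever displacing $W$ itself.

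Your alternative $pb_4$ route is not pursued in the paper and, as sketched, does not work: there is no general lower bound of the form $pb_4(X_0,X_1,Y_0,Y_1)\ge c\,\cA^{-1}$ in terms of displacement energies of the four sets (the known $pb_4$ bounds in \cite{BEP} come from entirely different geometric inputs), and the claimed containment of the four complements in single $W_j$'s has no reason to hold for a generic cover.
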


\medskip
\noindent In (ii) $\pi_2(M)$ stands for the second homotopy group of $M$. The condition
$\pi_2(M)=0$ means the every sphere in $M$ contracts to a point. It holds, for instance,
when $M$ is the $2n$-dimensional torus. This condition  cannot be lifted: one can show
that Theorem \eqref{thm-main-fine covers}(ii) is wrong as stated for the sphere $S^2$.
It would be interesting to explore whether the statement could be modified so that it
will hold for all closed symplectic manifolds.

\medskip
\noindent In view of Propositions \ref{prop-dpregular-refinement} and \ref{prop-Liouville-refinement}
we get that for a $(d,p)$-regular cover (resp., a Liouville cover of degree $\leq d$) with the magnitude
of localization $\cA$
\begin{equation} \label{eq-mag-loc}
pb(\cU) \geq C\cdot\cA^{-1}\;,
\end{equation}
where the constant $C$ depends only on $d,p$ (resp. only on $d$). By Proposition \ref{prop-refinements}
the same inequality holds for refinements of these covers.

\subsection{Combinatorics of covers}
Our next goal is to prove Propositions \ref{prop-dpregular-refinement} and \ref{prop-Liouville-refinement}.
It would be convenient to introduce the following combinatorial language.
Let $\cU=\{U_1,...,U_L\}$ be an open cover of
a closed manifold $M$. Define the graph $\Upsilon$ whose vertices form the set
$\Omega_L =\{1,...,L\}$. Vertices $i$ and $j$ are joined by an edge if and only if
$\overline{U}_i \cap \overline{U}_j \neq \emptyset$.
Let $\gamma$ be the graph metric on $\Upsilon$: the distance between two vertices
is defined as the number of edges in the shortest path connecting them.
Denote by $d(\cU)$ the maximal degree of a vertex in $\Upsilon$.

Recall that the chromatic number of a graph is the smallest number of colors needed to color the vertices  so that no two adjacent vertices share the same color. It is well known that the chromatic number does not exceed
$d+1$ where $d$ is the maximal vertex degree of the graph.

\medskip
\noindent
\begin{prop}\label{prop-colors} For every $k \in \N$ the vertices of $\Upsilon$ can be colored into $\leq d(\cU)^k+1$
colors in such a way that every two vertices of the same color lie at the distance
$\geq k+1$.
\end{prop}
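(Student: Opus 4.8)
The plan is to recognise the assertion as the classical greedy‑colouring bound applied to an appropriate auxiliary graph. First I would form the $k$-th power graph $\Upsilon^{(k)}$: it has the same vertex set $\Omega_L$ as $\Upsilon$, and two distinct vertices $i,j$ are declared adjacent exactly when $\gamma(i,j)\le k$. By construction, a colouring of the vertices of $\Upsilon$ in which every two vertices of one colour lie at graph distance $\ge k+1$ is precisely a proper colouring of $\Upsilon^{(k)}$, so it suffices to bound the chromatic number of $\Upsilon^{(k)}$ by $d(\cU)^k+1$.

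The heart of the argument is a bound on the maximal vertex degree of $\Upsilon^{(k)}$. Writing $d:=d(\cU)$ and fixing a vertex $v$, a breadth‑first count starting from $v$ shows that the number of vertices at distance exactly $j\ge 1$ from $v$ is at most $d(d-1)^{j-1}$ (each vertex at distance $j-1$ has at most $d$ neighbours, at least one of which lies strictly closer to $v$). Summing over $j=1,\dots,k$, the degree of $v$ in $\Upsilon^{(k)}$ is at most $S_k:=\sum_{j=1}^{k} d(d-1)^{j-1}$. Regrouping the geometric sum gives the recursion $S_{k+1}=d+(d-1)S_k$, and since $S_1=d$ one proves $S_k\le d^k$ for all $k\ge 1$ by induction, using $d\le d^k$. (The degenerate case $d\le 1$ is immediate from these formulas: $S_k=0$ if $d=0$ and $S_k=1$ if $d=1$.) Hence the maximal degree of $\Upsilon^{(k)}$ is at most $d^k$.

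Finally I would invoke the fact that a graph $G$ can always be properly coloured with at most $\Delta(G)+1$ colours, where $\Delta(G)$ is its maximal vertex degree: colour the vertices one at a time, each time picking any colour not used by an already‑coloured neighbour. Applied to $\Upsilon^{(k)}$ this produces a proper colouring with at most $d^k+1$ colours, which is exactly the colouring of $\Upsilon$ claimed in the proposition.

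I do not anticipate a real obstacle. The only two points requiring a little care are the translation between the distance condition and proper colouring of the power graph, and obtaining the sharp degree bound $d^k$ (a careless count only gives something like $\sum_{j=0}^{k} d^j$); the recursion $S_{k+1}=d+(d-1)S_k$ is precisely what makes the clean bound $S_k\le d^k$ fall out.
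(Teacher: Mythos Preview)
Your proof is correct and follows essentially the same approach as the paper: form the $k$-th power graph, bound its maximal degree by $d(\cU)^k$, and apply the greedy colouring bound $\Delta+1$. The paper simply asserts the degree bound without justification, whereas you supply the careful breadth-first count $S_k=\sum_{j=1}^k d(d-1)^{j-1}\le d^k$.
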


\begin{proof} Denote by $\Upsilon^k$ a graph whose vertices coincides with the ones of $\Upsilon$,
and where two vertices are connected by an edge if and only if the distance between them is $\leq k$.
Clearly, the maximal degree of a vertex in  $\Upsilon^k$ does not exceed $a:= d(\cU)^k$. Therefore
its chromatic number does not exceed $a+1$. By definition, this means that $\Upsilon^k$ can be colored
in $\leq a+1$ colors so that any two vertices $i,j$ of the same color are not connected by an edge
in $\Upsilon^k$. This means the $\gamma(i,j) \geq k+1$.
\end{proof}

\medskip

{\bf Proof of Proposition \ref{prop-dpregular-refinement}:}
Let $\cU=\{U_1,...,U_L\}$ be a $(d,p)$-regular open cover of $M$. By Proposition \ref{prop-colors}
the sets $U_j$ can be colored in $N \leq d^{2p}+1$ colors so that for every two sets
$U_i$ and $U_j$ of the same color the stars $St_p(U_i)$ and $St_p(U_j)$ do not intersect.
For $\alpha \in \{1,...,N\}$
denote by $I_\alpha$ the set of all indices $i$ such that $U_i$ is of color $\alpha$.
Put $W_\alpha = \sqcup_{i \in I_\alpha} U_i$. The cover $\cU$ is a refinement of $\{W_\alpha\}$.
Since for all $i \in I_\alpha$ the set $U_i$ is displaceable in $St_p(U_i)$ and the stars
$St_p(U_i)$ are pairwise disjoint,
$$E(W_\alpha) \leq E(\sqcup_{i \in I_\alpha} U_i, \sqcup_{i \in I_\alpha} St_p(U_i))=$$
$$\max_{i \in I_\alpha}E(U_i, St_p(U_i)) \leq E_p(\cU)\;.$$
This completes the proof.
\qed

\medskip
\noindent
{\bf Proof of Proposition \ref{prop-Liouville-refinement}:}
Let $\cU= \{U_1,...,U_L\}$ be a $d$-regular finite open cover of $M$
so that all subsets $U_j$ are open portable Liouville domains.
The elements of the cover can be colored
in $N \leq d+1$ colors in such a way that for every two subsets $U_i,U_j$
of the same color $\overline{U_i}\cap \overline{U_j} = \emptyset$. For $\alpha \in \{1,...,N\}$
denote by $I_\alpha$ the set of all indices $i$ such that $U_i$ is of color $\alpha$.
Put $W_\alpha = \sqcup_{i \in I_\alpha} U_i$. The cover $\cU$ is a refinement of $\{W_\alpha\}$. Since $W_\alpha$ is the disjoint union of Liouville domains,
it is again a Liouville domain with $$\chi(W_\alpha) = \max_{i \in I_\alpha} \chi(U_i)\;,$$
so that $\chi(\cW)=\chi(\cU)$.
This completes the proof.
\qed

\subsection{Regularity of greedy covers}\label{sec-dpregular-greedy}
In this section we justify the claim of Example \ref{exam-dpregular-greedy}.
Let $\cU$ be a greedy cover responsible for the phase space discretization into Riemannian balls of radius $r$, see Example \ref{exam-greedy} above. We identify the vertices of the graph $\Upsilon$ with the set
$Z$ consisting of the centers of the balls. With this language $z,w \in Z$ are connected by an edge if and
only if $\rho(z,w) \leq 2r$. For a vertex $z \in Z$ we denote by $St_k(z) \subset M$ its $k$-star.
We tacitly assume that all the distances appearing below are small
in comparison with the injectivity radius of the Riemannian metric $\rho$.

\medskip
\noindent
{\sc Step 1:} Fix $k \in \N$. We claim that $D(z,kr) \subset St_k(z)$ for all
sufficiently small $r$. Indeed, suppose that $\rho(z,w) \leq kr$, where $k \in \N$ and $z \in Z$.
Joining $z$ and $w$ with the shortest geodesic and partitioning,
we get points $$u_0=z,u_1,...,u_{k-1}, u_k=w$$
with $\rho(u_i,u_{i+1}) \leq r$ for all $i=0,...,k-1$.
Each points $u_i$ lies at the distance $< r/2$ from some point $u'_i \in Z$,
where we put $u'_0=z$.
By the triangle inequality, $\rho(u'_i,u'_{i+1}) < 2r$ and hence the graph distance
$\gamma(z,u'_k)$ is $\leq k$. Since $w \in D(u'_k,r)$ we conclude that $w \in St_k(z)$.
The claim follows.

\medskip
\noindent {\sc Step 2:} We claim that the degree of every vertex $z \in Z$ in the graph $\Upsilon$
is $\leq d$ where $d$ depends only on the Riemannian metric. Indeed,
let $v(r)$ (resp. $V(r)$) be the minimal (resp. the maximal)
volume of the Riemannian ball of radius $r$ on $M$. Put
$$b=\sup_{0 < r < r_0} V(3r)/v(r/4)\;.$$
Let $d_z$ be the degree of a vertex $z \in Z$ in the graph $\Upsilon$ and $Y$ be the set of neighbors of $z$.
Since the distance between each two of the neighbors is $> r/2$,
the balls $ D(y,r/4)$, $y \in Y$ are pairwise disjoint. At the same time $\rho(z,y) \leq 2r$
for all $y \in Y$ and hence $D(y,r/4) \subset D(z,3r)$. Therefore,
$$\sqcup_{y\in Y} D(y,r/4) \subset D(z,3r)\;.$$
It follows that $d_z\cdot v(r/4) \leq V(3r)$ and hence $d \leq b$.

\medskip
\noindent
{\sc Step 3:} Choose $r_1>0 $ small enough such that for every $j$ and $0 < r < r_1$ the ball $D(z,r)$ lies in a Darboux chart of $(M,\omega)$. We can assume that for every $z \in Z$ this chart is identified with the standard symplectic ball $B(r_2) \subset \R^{2n}$, where $z$ corresponds to the origin $0 \in \R^{2n}$. Furthermore, for some positive constants $a < b$
$$B(ar) \subset D(z,r) \subset B(br)\;\; \forall r \in (0,r_1), z \in Z \;.$$
In particular, we have that for $r$ small enough
$$D(z,r) \subset B(br) \subset B(10br) \subset D(z,cr), \;\; c=10b/a\;.$$
Fix an integer $p > c$. By Step 1 we have that
$D(z,cr) \subset St_p(z)$. Comparing the displacement energies we get that
$$E(D(z,r), St_p(z)) \leq E(B(br), B(10br)) = \pi b^2r^2\;.$$
Therefore the greedy cover $\cU$ is $(d,p)$-regular with $E_p(\cU) \leq \pi b^2r^2$.
This proves the claim of Example \ref{exam-dpregular-greedy}.
\qed

\section{Symplectic geometry of fine covers}\label{sec-symptop}

The rest of this section is dedicated to the proof of Theorem \ref{thm-main-fine covers}.

\subsection{Algebraic preliminaries}\label{subsubsec-alg}
Let us recall algebraic preliminaries which are borrowed from \cite{EP-qss}.
Let $\cG$ be a group, and let $c: \cG \to \R$ be a function with the following properties:
\begin{itemize}
\item[{(i)}] $c(1)=0$;
\item[{(ii)}] $c(\phi\psi\phi^{-1})=c(\psi) \;\; \forall \phi,\psi \in \cG$;
\item[{(iii)}] $c(\phi\psi) \leq c(\phi)+c(\psi)$.
\end{itemize}
We start with the following two observations. First, the function
$$q(\phi):= c(\phi) + c(\phi^{-1})$$
is a pseudo-norm on $G$. This means, by definition, that
$q(1)=0$, $q$ is symmetric: $q(\phi)=q(\phi^{-1})$, and $q$ satisfies the triangle inequality
$q(\phi\psi) \leq q(\phi)+q(\psi)$. In addition, $q$ is conjugation invariant: $q(\phi\psi\phi^{-1})=q(\psi)$.

Second, we claim that for every $\phi \in \cG$ the limit
$$\sigma(\phi):= \lim_{k \to \infty} c(\phi^k)/k$$
is well defined. Indeed, put
$a_k= kc(\phi^{-1}) + c(\phi^k)$. Then $a_k \geq c(\phi^{-k})+ c(\phi^k) \geq 0$ and
$a_{k+l}\leq a_k+a_l$. The subadditivity implies that the limit $\lim_{k\to \infty} a_k/k$ is well defined,
and hence  $\sigma(\phi)$ is well defined. The claim follows.

\medskip
\noindent
\begin{prop}\label{prop-algebraic}
\begin{equation}\label{eq-alg}
|\sigma(\phi\psi)-\sigma(\phi)-\sigma(\psi)| \leq \min(q(\phi),q(\psi))
\end{equation}
for all $\phi,\psi \in \cG$.
\end{prop}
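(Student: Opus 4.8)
\textbf{Proof proposal for Proposition~\ref{prop-algebraic}.}

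The plan is to estimate $c$ of a product of iterates in two ways, using only properties (i)--(iii) together with conjugation invariance (ii), and then pass to the limit. First I would fix $\phi,\psi\in\cG$ and $k\in\N$, and compare $(\phi\psi)^k$ with $\phi^k\psi^k$ via a ``sliding'' argument: write $(\phi\psi)^k = \phi^k \cdot \big(\phi^{-(k-1)}\psi\phi^{k-1}\big)\big(\phi^{-(k-2)}\psi\phi^{k-2}\big)\cdots\big(\phi^{-1}\psi\phi\big)\psi$. Applying subadditivity (iii) and then conjugation invariance (ii) to each of the $k$ conjugates of $\psi$ gives $c\big((\phi\psi)^k\big) \leq c(\phi^k) + k\,c(\psi)$. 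Dividing by $k$ and letting $k\to\infty$, the left side tends to $\sigma(\phi\psi)$ and $c(\phi^k)/k\to\sigma(\phi)$, so $\sigma(\phi\psi) \leq \sigma(\phi) + c(\psi)$. The same manipulation applied to $\phi^{-1},\psi^{-1}$ (noting $(\phi\psi)^{-1}=\psi^{-1}\phi^{-1}$, and that $\sigma$ is insensitive to such a cyclic swap since $\sigma(\eta\theta)=\sigma(\theta\eta)$ by conjugation invariance of $c$ on iterates) yields $-\sigma(\phi\psi) = \sigma\big((\phi\psi)^{-1}\big) \leq \sigma(\phi^{-1}) + c(\psi^{-1})$, i.e. $\sigma(\phi\psi) \geq \sigma(\phi) - c(\psi^{-1})$.

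Combining the two bounds gives $-c(\psi^{-1}) \leq \sigma(\phi\psi) - \sigma(\phi) \leq c(\psi)$. To turn this into an estimate involving $\sigma(\psi)$ rather than the raw values, I would now apply the inequality just proven with $\phi$ replaced by $\phi\psi$ and $\psi$ replaced by $\psi^{-1}$: this gives $-c(\psi) \leq \sigma(\phi) - \sigma(\phi\psi) \leq c(\psi^{-1})$, which is the same two-sided bound rearranged, so by itself it does not improve matters. The genuine improvement comes from noticing that $\sigma(\psi^k)=k\,\sigma(\psi)$ and homogenizing: replace $\psi$ by $\psi^k$ in $\sigma(\phi\psi^k)-\sigma(\phi) \leq c(\psi^k)$ after first absorbing $\psi^{k-1}$ into $\phi$ — more cleanly, iterate $\sigma(\phi\psi) \le \sigma(\phi)+c(\psi)$ to get $\sigma(\phi\psi^{k}) \le \sigma(\phi) + k\,c(\psi)$, but also $\sigma(\phi\psi^{k}) \ge \sigma(\phi\psi^{k+1}) - c(\psi^{-1})$, telescoping to compare with $\sigma(\phi) - k c(\psi^{-1})$; dividing the relation $\sigma(\phi\psi^k)-\sigma(\phi) \le k c(\psi)$ by... this does not quite homogenize either. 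The correct device is: apply the basic bound to the pair $(\phi\psi,\psi^{-1})$ and to $(\psi^{-1}\phi\psi,\,\text{powers})$; concretely, from $\sigma(\phi\psi)\le\sigma(\phi)+c(\psi)$ applied with $(\phi,\psi)\mapsto(\phi\psi^k,\psi)$ we get $\sigma(\phi\psi^{k+1})\le\sigma(\phi\psi^k)+c(\psi)$, hence $\sigma(\phi\psi^k)\le\sigma(\phi)+kc(\psi)$ for all $k\ge0$; similarly $\sigma(\phi\psi^k)\ge\sigma(\phi)-kc(\psi^{-1})$. Now also $\sigma(\phi\psi^k)=\sigma((\phi\psi^k))$ and, writing $\phi\psi^k=(\phi\psi^{k})$, replace the roles to get $\sigma(\phi\psi^k)\le \sigma(\psi^k)+c(\phi)=k\sigma(\psi)+c(\phi)$ and $\ge k\sigma(\psi)-c(\phi^{-1})$. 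Comparing $\sigma(\phi)+kc(\psi)\ge\sigma(\phi\psi^k)\ge k\sigma(\psi)-c(\phi^{-1})$ gives $k(c(\psi)-\sigma(\psi))\ge -\sigma(\phi)-c(\phi^{-1})$; dividing by $k$ and letting $k\to\infty$ forces $\sigma(\psi)\le c(\psi)$, and symmetrically $-\sigma(\psi)=\sigma(\psi^{-1})\le c(\psi^{-1})$. Feeding $\sigma(\psi)\le c(\psi)$ and $\sigma(\psi^{-1})\le c(\psi^{-1})$ back: we have $\sigma(\phi\psi)-\sigma(\phi)\le c(\psi)$ and, applying the lower bound with $\psi$ replaced by $\psi^{-1}$ and $\phi$ by $\phi\psi$, $\sigma(\phi\psi)-\sigma(\phi)\ge -c(\psi^{-1})\ge -q(\psi)$, while the upper bound similarly improves to $\le q(\psi)$ once one also uses the reversed pair. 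Thus $|\sigma(\phi\psi)-\sigma(\phi)-\sigma(\psi)|\le q(\psi)$ after accounting for the $\sigma(\psi)$ term; by the symmetric argument swapping $\phi\leftrightarrow\psi$ (legitimate since $\sigma(\phi\psi)=\sigma(\psi\phi)$) one also gets the bound with $q(\phi)$, and taking the minimum finishes the proof.

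The step I expect to be the main obstacle is precisely this homogenization: the naive sliding argument only delivers bounds in terms of $c(\psi)$ and $c(\psi^{-1})$, not in terms of the homogenized quantities $\sigma$, and one must run the iterate-and-divide trick a second time — comparing $\sigma(\phi\psi^k)$ against both $\sigma(\phi)+O(k)$ and $k\sigma(\psi)+O(1)$ — to cancel the non-homogeneous error and recover the sharp constant $\min(q(\phi),q(\psi))$. Everything else is a direct manipulation of (i)--(iii) and the already-established facts that $\sigma$ exists, is homogeneous of degree one along powers, and is conjugation invariant; I would keep those auxiliary observations as cited from the paragraph preceding the proposition rather than reprove them.
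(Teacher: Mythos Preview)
Your overall approach --- the product decomposition $(\phi\psi)^k=\phi^k\cdot(\text{product of $k$ conjugates of }\psi)$ followed by subadditivity and conjugation invariance --- is exactly the paper's approach (the paper writes the same identity as $(\phi\psi)^k=\theta\phi^k$ with $\theta=\prod_i\phi^i\psi\phi^{-i}$). The basic inequality $\sigma(\phi\psi)\le\sigma(\phi)+c(\psi)$ that you derive is correct and is the heart of the matter.

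There is, however, a genuine error that appears twice in your write-up: you assert $\sigma(\eta^{-1})=-\sigma(\eta)$ (first for $\eta=\phi\psi$, later as ``$-\sigma(\psi)=\sigma(\psi^{-1})$''). This is false under hypotheses (i)--(iii): from $c(\eta^k)+c(\eta^{-k})\ge c(1)=0$ one only gets $\sigma(\eta)+\sigma(\eta^{-1})\ge 0$, and for instance if $c$ is a conjugation-invariant norm (so $c(\eta)=c(\eta^{-1})$) then $\sigma(\eta^{-1})=\sigma(\eta)$, not $-\sigma(\eta)$. The fix is easy and is in fact implicit in your own text: the lower bound $\sigma(\phi\psi)\ge\sigma(\phi)-c(\psi^{-1})$ follows by applying your basic inequality to the pair $(\phi\psi,\psi^{-1})$, giving $\sigma(\phi)=\sigma((\phi\psi)\psi^{-1})\le\sigma(\phi\psi)+c(\psi^{-1})$; and the needed bound $\sigma(\psi)\ge -c(\psi^{-1})$ follows from $c(\psi^k)\ge -c(\psi^{-k})\ge -kc(\psi^{-1})$. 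With these corrections your four inequalities $-c(\psi^{-1})\le\sigma(\phi\psi)-\sigma(\phi)\le c(\psi)$ and $-c(\psi^{-1})\le\sigma(\psi)\le c(\psi)$ combine to give $|\sigma(\phi\psi)-\sigma(\phi)-\sigma(\psi)|\le c(\psi)+c(\psi^{-1})=q(\psi)$, and symmetry via $\sigma(\phi\psi)=\sigma(\psi\phi)$ finishes. The ``homogenization'' digression in your middle paragraph is unnecessary once this is straightened out; the paper avoids the issue entirely by keeping the estimate at the level of $c(\phi^k),c(\psi^k),c((\phi\psi)^k)$ and only dividing by $k$ at the very end.
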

\begin{proof} We start with the formula
\begin{equation}\label{eq-product}
(\phi\psi)^k = \theta\phi^k,\;\;\;\text{where} \;\;\; \theta=  \prod_{i=1}^k \phi^i\psi\phi^{-i} \;.
\end{equation}

\medskip
\noindent 1) Note that
$$kc(\psi^{-1}) + c(\psi^k) \geq c(\psi^{-k}) + c(\psi^k) \geq 0\;,$$
and hence
$$kc(\psi)=kq(\psi)-kc(\psi^{-1}) \leq kq(\psi) + c(\psi^k)\;.$$
By \eqref{eq-product}
$$c((\phi\psi)^k) \leq c(\theta) + c(\phi^k) \leq kc(\psi)+c(\phi^k) \leq kq(\psi)+c(\psi^k)+c(\phi^k)\;.$$
Dividing by $k$ and passing to the limit as $k \to \infty$ we get that
\begin{equation}\label{eq-algebraic-leq}
\sigma(\phi\psi)-\sigma(\phi)-\sigma(\psi) \leq q(\psi)\;.
\end{equation}

\medskip
\noindent
2) Note that
$$kc(\psi^{-1}) = -kc(\psi) + kq(\psi) \leq -c(\psi^k) + kq(\psi)\;.$$
By \eqref{eq-product},
$$c(\phi^k) \leq c(\theta^{-1}) + c((\phi\psi)^k) \leq kc(\psi^{-1})+  c((\phi\psi)^k)\;.$$
Combining these inequalities we get that
$$c(\phi^k) \leq -c(\psi^k) + kq(\psi) + c((\phi\psi)^k)\;.$$
Rearranging the terms, dividing by $k$ and passing to the limit as $k \to \infty$
we get that
$$\sigma(\phi\psi)-\sigma(\phi)-\sigma(\psi) \geq -q(\psi)\;.$$
Together with \eqref{eq-algebraic-leq} this yields
$$|\sigma(\phi\psi)-\sigma(\phi)-\sigma(\psi)| \leq q(\psi)\;.$$
Since $c$ is conjugation invariant, $\sigma$ is also conjugation invariant and thus
$\sigma(\phi\psi)=\sigma(\psi\phi)$. Repeating the above arguments we get that
$$|\sigma(\phi\psi)-\sigma(\phi)-\sigma(\psi)| \leq q(\phi)\;.$$
Together with the previous inequality this yields \eqref{eq-alg}.
\end{proof}

\subsection{Spectral invariants}\label{subsec-spectral} Let $(M,\omega)$ be a closed symplectic manifold.
Denote by $\cF$ the space of all smooth functions $F: M \times [0,1] \to \R$. Given such an $F$,
we write $F_t(z):= F(z,t)$. For an open subset $U \subset M$ introduce the subspace $\cF(U)$ consisting of all $F \in \cF$ such that $F_t$ is supported in $U$ for all $t \in [0;1]$.
Let $\cG$ be the  universal cover of the group of Hamiltonian diffeomorphisms of
$(M,\omega)$. Given a Hamiltonian $F \in \cF$, we write $\phi^t_F$ for the Hamiltonian flow generated by $F$,
and $\phi_F$ for the element of $\cG$ represented by the path $\{\phi^t_F\}$, $t \in [0;1]$.
Given an open subset $U \in M$, Hamiltonians from $\cF(U)$ generate a
subgroup $\cG(U) \subset \cG$.

In what follows we are going to use the technique of spectral invariants
introduced in \cite{Schwarz, Oh}. We denote by $c: \cG \to \R$ the spectral invariant associated to the fundamental class of $M$. The function $c$ satisfies properties (i)-(iii) from Section \ref{subsubsec-alg}
above. We keep the notations $q(\phi)= c(\phi)+c(\phi^{-1})$ and
$\sigma(\phi)= \lim_{k \to \infty} c(\phi^k)/k$ introduced in this section.
For a function $F \in \cF$ define its mean value as
$$\langle F \rangle = \frac{\int_0^1 \int_M F \; \omega^n \cdot dt}{ \int_M \omega^n }\;,$$
where $\dim M =2n$. Introduce the {\it partial symplectic quasi-state} \cite{EP-qss}
$\zeta: C^{\infty}(M) \to \R$ by
$$\zeta(F) = \sigma(\phi_F) + \langle F \rangle\;.$$

Let us list some additional properties of these functionals which will be used below (see \cite{EP-qss,EPZ}):

\medskip
\noindent
{\sc Stability:}
\begin{equation} \label{eq-Lipschitz-c}
|(c(\phi_F)+ \langle F \rangle)- (c(\phi_G)+ \langle G \rangle)| \leq \int_0^1||F_t-G_t||\; dt\;
\end{equation}
and
\begin{equation} \label{eq-Lipschitz}
|(\sigma(\phi_F)+ \langle F \rangle)- (\sigma(\phi_G)+ \langle G \rangle)| \leq \int_0^1||F_t-G_t||\; dt\;.
\end{equation}

\medskip
\noindent {\sc Homogeneity:} The functional $\zeta$ is $\R_+$-homogeneous:  $\zeta(sF) = s\zeta(F)$ for all $s > 0$.

\medskip \noindent {\sc Vanishing:} A key property of $\zeta$ is that $\zeta(F)=0$ provided the support of $F$ is displaceable. Further (see \cite{EP-qss})
\begin{equation}\label{eq-displacement-spectral}
q(\phi) \leq 2E(U) \;\; \forall \phi \in \cG(U)\;,
\end{equation}
where $E(U)$ is the displacement energy of $U$.

\medskip
\noindent {\sc Spectrality (see \cite{U}):} Finally, let us mention that the number $c(\phi)$ necessarily lies in {\it the action spectrum of $\phi$},
a special subset associated to a (lift of a) Hamiltonian diffeomorphism $\phi \in \cG$ as follows: Represent $\phi$ as $\phi_F$ for some Hamiltonian $F \in \cF$. Let $\gamma = \{\phi_F^t x\}$, $t \in [0,1]$ be a contractible closed orbit of the Hamiltonian flow of $F$, that is $\phi_F^1 x =x$. Choose any two-dimensional disc $D \subset M$ spanning $\gamma$.
By definition,
$$\text{Action}(\gamma,D)= \int_0^1 F( \phi_F^t x, t) \; dt-\int_D \omega\;.$$
The action spectrum $\text{spec}(\phi) \subset \R$ is defined as the set of actions of all closed orbits
of $\phi$ with respect to all spanning discs $D$. The action spectrum does not depend on the
specific Hamiltonian $F$ generating $\phi$ and has empty interior in $\R$ (see \cite[Lemma 2.2]{Oh-Asian}).
Moreover, if $\pi_2(M)=0$, all discs $D$ spanning a given orbit $\gamma$ are homotopic with fixed boundary,
so that $\int_D \omega$ does not depend on $D$. In this case $\text{spec}(\phi)$ is a compact subset of $\R$.

\subsection{The Poisson bracket inequality}
For $F,G \in C^{\infty}(M)$ put
$$\Pi(F,G) = |\zeta(F+G)-\zeta(F)-\zeta(G)|\;\;\text{and}\;\;S(F,G) = \sup_{s >0} \min (q(\phi_{sF}),q(\phi_{sG}))\;.$$
Observe that by \eqref{eq-displacement-spectral}
\begin{equation}\label{eq-displacement-spectral-1}
S(F,G) \leq 2\min (E(\text{supp} F),E(\text{supp} G))\;.
\end{equation}
The next proposition is a modification of a result from \cite{EPZ}.

\medskip
\noindent
\begin{prop}\label{prop-pbinequality}
For every functions $F,G \in C^{\infty}(M)$
\begin{equation}\label{eq-pbinequality}
\Pi(F,G) \leq \sqrt{2S(F,G) \cdot  ||\{F,G\}|| }.
\end{equation}
\end{prop}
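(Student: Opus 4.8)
The plan is to exploit the flow identity $(\phi\psi)^k = \theta\phi^k$ with $\theta = \prod_{i=1}^k \phi^i\psi\phi^{-i}$ from the proof of Proposition \ref{prop-algebraic}, specialized to $\phi = \phi_{sF}$ and $\psi = \phi_{sG}$ for a positive parameter $s$, and then to optimize over $s$. Setting $\phi = \phi_{sF}$, $\psi = \phi_{sG}$, note $\phi\psi = \phi_H$ where $H$ is (up to reparametrization) the concatenation of $sF$ and $sG$; by the cocycle property of the quasimorphism defect and the very definition of $\zeta$, one has $\zeta(\phi_{sF}\phi_{sG}) - \zeta(\phi_{sF}) - \zeta(\phi_{sG})$ equal, up to the mean-value correction, to $\sigma(\phi_{sF}\phi_{sG}) - \sigma(\phi_{sF}) - \sigma(\phi_{sG})$, which by Proposition \ref{prop-algebraic} is bounded in absolute value by $\min(q(\phi_{sF}),q(\phi_{sG})) \leq S(F,G)$ for every $s$. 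Since $\zeta$ is $\R_+$-homogeneous, $\zeta(\phi_{sF}\phi_{sG})$ relates to $\zeta(s(F+G))$ with a controlled error, and the key point is that this error decays in $s$.

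More precisely, the composition $\phi_{sF}\phi_{sG}$ is generated not by $s(F+G)$ but by the Hamiltonian $s\big(F_t + G_t\circ(\phi^t_{sF})^{-1}\big)$, i.e. there is a commutator-type discrepancy between $\phi_{sF}\phi_{sG}$ and $\phi_{s(F+G)}$. The stability estimates \eqref{eq-Lipschitz-c}, \eqref{eq-Lipschitz} let me bound $|\zeta(\phi_{sF}\phi_{sG}) - \zeta(s(F+G))|$ by $\int_0^1 \| sG_t\circ(\phi^t_{sF})^{-1} - sG_t\|\, dt$. Now $G_t\circ(\phi^t_{sF})^{-1} - G_t$ is the difference of $G_t$ along a flow that moves points a distance of order $s$ in time $1$, and its size is governed by $\|\{F,G\}\|$: indeed $\frac{d}{du}\big(G_t\circ(\phi^u_{sF})^{-1}\big) = -s\{F_u \text{-related term}, \cdot\}$, so integrating gives $\| G_t\circ(\phi^t_{sF})^{-1} - G_t \| \leq s\,t\,\|\{F,G\}\| \le s\|\{F,G\}\|$ (after checking the time-dependence bookkeeping). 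Hence the discrepancy is $\le s^2\|\{F,G\}\|$.

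Combining: by homogeneity $\zeta(s(F+G)) = s\,\zeta(F+G)$, $\zeta(\phi_{sF}) = s\zeta(F)$, $\zeta(\phi_{sG}) = s\zeta(G)$, so
$$
s\,\Pi(F,G) = \big|\zeta(s(F+G)) - \zeta(\phi_{sF}) - \zeta(\phi_{sG})\big| \le S(F,G) + s^2\|\{F,G\}\|
$$
for every $s > 0$. Dividing by $s$ gives $\Pi(F,G) \le S(F,G)/s + s\|\{F,G\}\|$, and minimizing the right-hand side over $s > 0$ — choosing $s = \sqrt{S(F,G)/\|\{F,G\}\|}$ — yields $\Pi(F,G) \le 2\sqrt{S(F,G)\|\{F,G\}\|}$. (If $\|\{F,G\}\| = 0$ one instead lets $s \to \infty$ and gets $\Pi = 0$; if $S(F,G) = 0$ let $s\to 0$.) This is slightly weaker than the claimed constant; to get $\sqrt{2S\|\{F,G\}\|}$ I expect one must be more careful, using that $q(\phi_{sF})$ and $q(\phi_{sG})$ appear symmetrically so one can apply Proposition \ref{prop-algebraic} in the sharper form and/or use that the relevant discrepancy Hamiltonian is $\frac12 s^2\{F,G\} + O(s^3)$, halving the quadratic error.

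The main obstacle will be the bookkeeping of time-dependence in the composition formula for Hamiltonian flows and pinning down the exact constant in the estimate $\|G_t\circ(\phi^t_{sF})^{-1} - G_t\| \le \tfrac12 s^2\|\{F,G\}\| + o(s^2)$ (in particular making sure the linear-in-$s$ term genuinely cancels, which is what allows the $\sqrt{\cdot}$ scaling rather than a weaker bound). The rest — homogeneity, stability, the algebraic lemma, and the one-variable optimization — is routine given the tools already assembled in Sections \ref{subsubsec-alg} and \ref{subsec-spectral}.
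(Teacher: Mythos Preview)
Your approach is essentially identical to the paper's, and the only reason you land on $2\sqrt{S\|\{F,G\}\|}$ instead of $\sqrt{2S\|\{F,G\}\|}$ is a dropped factor of $\tfrac12$ in the stability step. You correctly write $\|G\circ(\phi^t_{sF})^{-1}-G\|\le s\,t\,\|\{F,G\}\|$, but then you replace $t$ by $1$ before applying the stability inequality \eqref{eq-Lipschitz}; that inequality involves $\int_0^1\|\cdot\|\,dt$, and $\int_0^1 s^2 t\,\|\{F,G\}\|\,dt=\tfrac12 s^2\|\{F,G\}\|$. Keeping this factor gives $s\,\Pi(F,G)\le S(F,G)+\tfrac12 s^2\|\{F,G\}\|$, and optimizing yields exactly $\sqrt{2S(F,G)\|\{F,G\}\|}$. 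No higher-order expansion or extra symmetry argument is needed---your own speculation about ``halving the quadratic error'' is precisely this $\int_0^1 t\,dt=\tfrac12$.

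A minor notational point: $\zeta$ is defined on functions, not on elements of $\cG$, so expressions like $\zeta(\phi_{sF}\phi_{sG})$ are ill-formed. The paper works directly with $\sigma$ on $\cG$ (the mean-value terms cancel in $\Pi$, so $\Pi(F,G)=|\sigma(\phi_{F+G})-\sigma(\phi_F)-\sigma(\phi_G)|$), then bounds $|\sigma(\phi_{F+G})-\sigma(\phi_F\phi_G)|$ by stability and $|\sigma(\phi_F\phi_G)-\sigma(\phi_F)-\sigma(\phi_G)|$ by Proposition~\ref{prop-algebraic}, exactly as you outline.
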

\begin{proof}
Let $f_t, g_t$ be the Hamiltonian flows of $F,G$, respectively. Set
$
H = F+G$, $K_t = F + G \circ f_t^{-1}$.
Observe that $\phi_K = \phi_F \, \phi_G$. Calculating
$$
G(f_t x) - G(x) = \int_0^t \frac{d}{ds} G(f_s x) ds = \int_0^t \{G, F\}(f_s x) ds\;,
$$
we get that
$
|| H-K_t || = ||G - G \circ f_t^{-1} || = ||G \circ f_t - G || \leq t || \{F,G\}||
$.
Since $\langle K \rangle = \langle F+G \rangle$,
we have by \eqref{eq-Lipschitz} that
$$
|\sigma(\phi_{F+G}) - \sigma(\phi_F \, \phi_G)| \leq \int_0^1 || H-K_t ||\; dt \leq \frac{1}{2} ||\{F,G\} ||.
$$
Then
\begin{multline}\label{4-ineq-Pi-unbalanced}
\Pi(F,G) = | \sigma(\phi_{F+G}) - \sigma(\phi_F) - \sigma(\phi_G)| \leq |\sigma(\phi_{F+G}) - \sigma(\phi_F \, \phi_G)|  \\
 + |\sigma(\phi_F \, \phi_G) - \sigma(\phi_F) - \sigma(\phi_G)|
\leq \frac{1}{2} ||\{ F,G\} || + I\;,
\end{multline}
where $I:= |\sigma(\phi_F \, \phi_G) - \sigma(\phi_F) - \sigma(\phi_G)|$.
By Proposition \ref{prop-algebraic},
$I \leq S(F,G)$,
and therefore
\begin{equation}
\label{4-ineq-Pi-unbalanced-1}
\Pi(F,G) \leq   \frac{1}{2} ||\{ F,G\} || + S(F,G)\;.
\end{equation}
Let us balance this inequality:  Take any $s > 0$, and note that
$$
\Pi(sF, sG) = s \cdot \Pi(F,G) ,\,\, || \{sF, sG\} || = s^2 ||\{F,G\} ||,\; S(sF,sG)=S(F,G)\;.
$$
Substituting into \eqref{4-ineq-Pi-unbalanced-1} and dividing by $s$ we obtain
$$
\Pi(F,G) \leq \frac{1}{2} s ||\{F,G\} || + \frac{S(F,G)}{s}.
$$
The right hand side is minimized by $s = \sqrt{2S(F,G) / ||\{F,G\} ||}$, which yields
$$
\Pi(F,G) \leq \sqrt{2S(F,G) \cdot  ||\{F,G\}|| }.
$$
\end{proof}

\medskip
\noindent
\begin{cor}\label{cor-nu-displacement} Let $\cW=\{W_1,...,W_N\}$ be an open cover of $M$ by displaceable
open subsets. Assume that for every $j$
\begin{equation} \label{eq-supq}
\sup_{\phi \in \cG(W_j)} q(\phi) \leq Q\;.
\end{equation}
Then
\begin{equation}\label{eq-pb-displaceable-cor}
pb(\cW) \geq   1/(2N^2Q)\;.
\end{equation}
\end{cor}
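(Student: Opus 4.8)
The plan is to run the argument through the partial symplectic quasi-state $\zeta$ and the Poisson bracket inequality of Proposition~\ref{prop-pbinequality}. Fix an arbitrary partition of unity $\f=\{f_1,\dots,f_N\}$ subordinated to $\cW$; since $pb(\cW)=\inf_{\f}\nu_c(\f)$, it suffices to prove $\nu_c(\f)\ge 1/(2N^2Q)$ and then pass to the infimum.

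First I would record the two facts about $\zeta$ that power the scheme. Since the Hamiltonian flow of a constant is trivial and $\langle 1\rangle=1$, one has the normalization $\zeta(1)=1$; and since $\supp f_j\subset W_j$ is displaceable (being a subset of a displaceable set), the vanishing property of $\zeta$ gives $\zeta(f_j)=0$ for every $j$.

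Next, set $g_0=0$ and $g_k=f_1+\dots+f_k$, so that $g_N=1$. Telescoping $\zeta(g_N)-\zeta(g_0)$ and inserting the null terms $\zeta(f_k)=0$ yields
$$1=\zeta(g_N)-\zeta(g_0)=\sum_{k=1}^{N}\bigl(\zeta(g_{k-1}+f_k)-\zeta(g_{k-1})-\zeta(f_k)\bigr),$$
hence $1\le\sum_{k=1}^{N}\Pi(g_{k-1},f_k)\le N\max_{k}\Pi(g_{k-1},f_k)$. Now I would estimate each $\Pi(g_{k-1},f_k)$ via Proposition~\ref{prop-pbinequality}. For the quantity $S(g_{k-1},f_k)=\sup_{s>0}\min\bigl(q(\phi_{sg_{k-1}}),q(\phi_{sf_k})\bigr)$, note that $sf_k$ is supported in $W_k$ for every $s>0$, so $\phi_{sf_k}\in\cG(W_k)$ and therefore $q(\phi_{sf_k})\le Q$ by hypothesis~\eqref{eq-supq}; thus $S(g_{k-1},f_k)\le Q$. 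For the bracket term, $g_{k-1}=\sum_i x_i f_i$ with $x\in K_N$ (entries $0$ or $1$) and $f_k=\sum_j y_j f_j$ with $y\in K_N$, so $\|\{g_{k-1},f_k\}\|\le\nu_c(\f)$ straight from the definition of $\nu_c$. Hence $\Pi(g_{k-1},f_k)\le\sqrt{2Q\,\nu_c(\f)}$, and the telescoping estimate becomes $1\le N\sqrt{2Q\,\nu_c(\f)}$, i.e. $\nu_c(\f)\ge 1/(2N^2Q)$, which gives \eqref{eq-pb-displaceable-cor}.

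The only points requiring care are the bookkeeping around $\zeta$ — confirming $\zeta(1)=1$ and $\zeta(f_j)=0$, and that the partial sums $g_{k-1}$ and the single functions $f_k$ are admissible test functions in the definition of $\nu_c(\f)$ — together with the uniform bound $S(g_{k-1},f_k)\le Q$; once these are in place the proof is just the telescoping identity followed by $N$ invocations of Proposition~\ref{prop-pbinequality}.
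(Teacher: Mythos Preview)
Your proof is correct and follows essentially the same route as the paper's: you telescope $\zeta$ along the partial sums $g_k=f_1+\dots+f_k$, use $\zeta(f_k)=0$ by displaceability, and bound each increment $\Pi(g_{k-1},f_k)$ via Proposition~\ref{prop-pbinequality} with $S\le Q$ (through the second argument $f_k$ supported in $W_k$) and $\|\{g_{k-1},f_k\}\|\le\nu_c(\f)$. The paper's version is terser but identical in substance.
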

\begin{proof} Let $\f=\{f_1,...,f_N\}$ be any partition of unity subordinated to $\cW$.
Observe that $\zeta(f_i)=0$ since the support of $f_i$ is displaceable.
Put $g_k  = f_1+...+f_k$.
By \eqref{eq-pbinequality},
$$\zeta(g_{k+1}) \leq \zeta(g_k) + \sqrt{2Q\nu_c(\f)}\;.$$
Thus
$$1=\zeta(g_N) \leq N\sqrt{2Q\nu_c(\f)}\;,$$
and hence
$\nu_c(\f) \geq 1/(2N^2Q)$. This yields \eqref{eq-pb-displaceable-cor}.
\end{proof}

\subsection{Proof of main theorems on fine regular covers}

\medskip
\noindent
{\bf Proof of Theorem \ref{thm-main-fine covers}(i):}
Let $(M,\omega)$ be a closed symplectic manifold, and let $\cW=\{W_1,...,W_N\}$
be an open cover of $M$ such that
all subsets $W_j$ are displaceable with $E(W_j)\leq \cA$.
Applying Corollary \ref{cor-nu-displacement}
combined with formula  \eqref{eq-displacement-spectral}
to the cover $\cW$ we get that
$$pb(\cW) \geq   C(N) \cdot \cA^{-1}\;,$$ the positive constant $C(N)$ depends only on $N$.
\qed

\medskip
\noindent In order to prove the second part of the main theorem, we shall need the following
auxiliary statement.

\medskip
\noindent
\begin{prop}\label{prop-Liouville-spectral} Let $(M,\omega)$ be a closed symplectic manifold with
$\pi_2(M) =0$. Let $\overline{W} \subset M$ be a portable closed Liouville domain.
Then for every $\phi \in \cG(W)$
\begin{equation}
\label{eq-disp-chi}
q(\phi) \leq 2\chi(W)
\end{equation}
\end{prop}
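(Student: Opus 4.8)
The plan is to reduce the claim to the quantitative control on $q$ provided by the Liouville structure, namely the fact that $F_W(s) = E(sW, W)/s$ converges to $\chi(W)$ as $s \to 0$, combined with the vanishing property of $\zeta$ on displaceable supports and the conformal rescaling of the Liouville flow. The key point is that, although $q(\phi) \leq 2E(W)$ for $\phi \in \cG(W)$ by \eqref{eq-displacement-spectral}, the displacement energy $E(W)$ may be much larger than $\chi(W)$; so a direct application of \eqref{eq-displacement-spectral} is too lossy. Instead, I would exploit the fact that every $\phi \in \cG(W)$, being generated by a Hamiltonian supported in $W$, is ``squeezable'': using the Liouville flow $R_\tau$ one can push the dynamics into the shrunk domain $sW$, where the displacement energy is controlled by $s\cdot F_W(s)$, and then let $s \to 0$.

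Concretely, first I would fix $\phi = \phi_F \in \cG(W)$ with $F \in \cF(W)$. For $s \in (0,1]$ let $\psi_s := R_{\log s}$ be the time-$\log s$ map of the Liouville flow, which is a symplectic (in general not Hamiltonian) embedding $W \to W$ with image $sW$ and which scales $\omega$ by $s$. Conjugating, $\psi_s \phi \psi_s^{-1}$ is generated by the Hamiltonian $s\cdot (F\circ \psi_s^{-1})$ (the factor $s$ coming from $\psi_s^*\omega = s\omega$), which is supported in $sW$; hence $\psi_s\phi\psi_s^{-1} \in \cG(sW)$. Here the hypothesis $\pi_2(M)=0$ is what lets me treat this as an honest statement in $\cG$: the symplectic isotopy realizing the conjugation is Hamiltonian (simply-connectedness of the components of $\overline{W}$, as in the Liouville-domain discussion), and spectral invariants of the conjugate are well-defined and computed in $M$. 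By \eqref{eq-displacement-spectral} applied to $sW$, and the definition of the relative displacement energy together with $E(sW,W) \leq E(sW,\text{(its }p\text{-star)})$—more precisely since $sW$ is displaceable \emph{inside} $W$ by portability—we get $q(\psi_s\phi\psi_s^{-1}) \leq 2E(sW,W) = 2s F_W(s)$.

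Now the main obstacle: relating $q(\psi_s\phi\psi_s^{-1})$ back to $q(\phi)$. Conjugation invariance of $q$ only helps for conjugation by elements of $\cG$ itself, and $\psi_s$ is not Hamiltonian. Here I would instead argue that $\psi_s\phi\psi_s^{-1}$, generated by $s(F\circ\psi_s^{-1})$, has $q$-value tending to $q(\phi)$ as $s \to 1$ by the stability estimate \eqref{eq-Lipschitz-c}–\eqref{eq-Lipschitz} (the generating Hamiltonians converge in the $L^\infty$-in-$x$, $L^1$-in-$t$ norm as $s\to 1$), \emph{but} that is the wrong limit; I need $s\to 0$. The correct route is to observe that for each fixed $s$, the element $\phi$ itself is conjugate \emph{in} $\cG(W)$ to $\psi_s\phi\psi_s^{-1}$ whenever the Liouville isotopy $\{R_{t\log s}\}_{t\in[0,1]}$ is supported in $W$ and hence realized by a path in $\cG(W)$—which it is, since $R_\tau$ maps $\overline{W}$ into itself. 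Thus $q(\phi) = q(\psi_s\phi\psi_s^{-1}) \leq 2sF_W(s)$ for every $s\in(0,1]$ small enough that $F_W(s) < \infty$. Letting $s \to 0$ and using $\lim_{s\to 0} sF_W(s)$... — wait, that would give $0$; the resolution is that the rescaling of the Hamiltonian also rescales $q$, so the inequality one actually obtains is $q(\phi) \leq 2 F_W(s) \cdot (\text{something} \to 1)$, i.e. after carefully tracking the conformal factors one gets $q(\phi) \leq 2F_W(s)$ for all small $s$, and then \eqref{eq-portability-number} gives $q(\phi) \leq 2\lim_{s\to0}F_W(s) = 2\chi(W)$, which is \eqref{eq-disp-chi}. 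I expect the bookkeeping of these conformal factors — matching the rescaling of the generating Hamiltonian against the rescaling hidden in $E(sW,W) = sF_W(s)$ and in the spectral-invariant normalization — to be the genuinely delicate part of the argument, and the place where the hypothesis $\pi_2(M)=0$ (ensuring the relevant spectral numbers live in a compact action spectrum and behave continuously under the rescaling isotopy) is essential.
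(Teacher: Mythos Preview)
The overall architecture of your argument matches the paper's: conjugate $\phi$ by the Liouville flow to obtain an element $\phi^{(s)}$ supported in $sW$, bound $q(\phi^{(s)})$ by $2E(sW,W)$ via \eqref{eq-displacement-spectral}, relate $q(\phi^{(s)})$ back to $q(\phi)$, and let $s\to 0$. The genuine gap is in the third step. Your first attempt --- invoking conjugation invariance of $q$ --- fails for precisely the reason your own calculation exposes: the Liouville flow $R_\tau$ satisfies $R_\tau^*\omega = e^\tau\omega$, so it is \emph{not} symplectic, let alone Hamiltonian, and $\phi$ and $\phi^{(s)}$ are not conjugate in $\cG$. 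Your second attempt --- ``the rescaling of the Hamiltonian also rescales $q$'' --- is the correct target, namely $q(\phi^{(s)}) = s\,q(\phi)$, but you have not proved it, and the anticipated ``bookkeeping of conformal factors'' is not bookkeeping: the spectral invariant $c$ is defined through Floer theory on all of $M$, not intrinsically on $W$, so there is no a priori reason it should scale with a conformal automorphism of $W$.

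The missing mechanism is the one you only gesture at in your last sentence: \emph{spectrality}. The paper shows that the action spectrum of $\phi^{(s)}$ equals $s\cdot\text{spec}(\phi)$. This uses $\pi_2(M)=0$ (making the action of a contractible orbit independent of the spanning disc) together with simple-connectedness of the components of $W$ (so that every closed orbit of $\phi$, which necessarily lies in $W$, bounds a disc \emph{inside} $W$; then both the Hamiltonian term and the symplectic area of $R_{\log s}$ of that disc scale by $s$). Since $c(\phi^{(s)})\in\text{spec}(\phi^{(s)}) = s\cdot\text{spec}(\phi)$, the function $s\mapsto c(\phi^{(s)})/s$ is continuous (by the stability estimate \eqref{eq-Lipschitz-c} applied to the generating Hamiltonians $F_s$) and takes values in the nowhere-dense set $\text{spec}(\phi)$; hence it is constant, giving $c(\phi^{(s)}) = s\,c(\phi)$, and likewise for $\phi^{-1}$. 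This yields $q(\phi^{(s)}) = s\,q(\phi)$, and then $q(\phi) = q(\phi^{(s)})/s \leq 2E(sW,W)/s = 2F_W(s) \to 2\chi(W)$. That spectrality-plus-continuity step is a substantive argument, not an accounting detail, and it is what your proposal is lacking.
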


\medskip
\noindent Recall that according to our conventions, $W$ is the interior of $\overline{W}$ and
all connected components of $\overline{W}$ (and hence of $W$) are simply connected.

\medskip
\noindent
\begin{proof} Let $R_\tau$, $\tau < 0$ be the flow of the Liouville vector field.
Take any $u \in (0,1)$ such that $uW$ is displaceable
in $W$ with the displacement energy $E_u$.
Suppose that $\phi=\phi_F$, where $F \in \cF(W)$. Write $f_t$ for the Hamiltonian flow of $F$ so that
$f_1 =\phi$. Define a path of Hamiltonian diffeomorphisms $f_t^{(s)}$, $s \in [u,1]$ as
$R_{\log s}f_tR_{\log s}^{-1}$ on $sW$ and as the identity outside $sW$. Observe that for every $s$
the path $\{f_t^{(s)}\}$ is generated by the Hamiltonian
\begin{equation}\label{eq-ham-dilation}
F_s(z,t) = sF(R_{\log s}^{-1}z,t)\;.
\end{equation}
Put $\phi^{(s)} = f_1^{(s)}$.

Consider the following subset of the plane $\R^2$ equipped with
the coordinates $(s,a)$: for every $s \in [u,1]$ mark on the line $s \times \R$ the action spectrum
of $\phi^{(s)}$. We claim that this set is a disjoint union of segments lying on straight lines
passing through the origin. Indeed,
$\gamma$ is a contractible closed orbit of $\{f_t\}$ if and only if $\gamma_s:= R_{\log s}\gamma$ is a
contractible closed orbit of  $\{f_t^{(s)}\}$. Since $\pi_2(M)=0$, the action of a contractible closed
orbit does not depend on the choice of the spanning disc. Since $W$ is simply connected,
$\gamma$ can be spanned by a disc $D \subset W$, and hence $\gamma_s$ can be spanned by $D_s:=R_{\log s} D$. Taking into account
that $\int_{D_s}\omega = s\cdot \int_\omega D$ and by using \eqref{eq-ham-dilation} we get that
$$\text{Action}(\gamma_s,D_s) = s \cdot \text{Action}(\gamma,D)\;.$$
This yields the claim. Remembering that the action spectrum is nowhere dense and that $c(\phi^{(s)}) \in \text{spec}(\phi^{(s)})$ depends continuously on $s$, we conclude that
$c(\phi^{(s)})= sc(\phi)$.
Similarly, $c((\phi^{(s)})^{-1})= sc(\phi^{-1})$, and so
$q(\phi^{(s)})= sq(\phi)$ for all $s \in [u,1]$.
By \eqref{eq-displacement-spectral}, $q(\phi^{(u)}) \leq 2E_u$ and
hence $q(\phi) \leq 2E_u/u$. Passing to the limit as $u \to 0$, we get that
$q(\phi) \leq 2\chi(W)$, as required.
\end{proof}

\medskip
\noindent
{\bf Proof of Theorem \ref{thm-main-fine covers}(ii)}
Let $(M,\omega)$ be a closed symplectic manifold with $\pi_2(M)=0$.
Consider a cover $\cW=\{W_1,...,W_N\}$ such that
all subsets $W_j$ are Liouville with
$\chi(W_j) \leq \cA$. By Proposition \ref{prop-Liouville-spectral}
\begin{equation} \label{eq-supq-1}
\sup_{\phi \in \cG(W_j)} q(\phi) \leq 2\cA\;
\end{equation}
for every $j=1,...,N$. Applying Corollary \ref{cor-nu-displacement}
to the cover $\cW$ we get that
$pb(\cW) \geq C(N)\cA^{-1}$.
\qed

\section{Geometry of overlaps }\label{subsec-pb4-overlaps}

\subsection{$pb_4$-invariant}\label{subsec-pb4}
We start with the definition of the Poisson bracket invariant $pb_4$, which was introduced and studied in \cite{BEP}. Let $X_0,X_1,Y_0,Y_1$ be a quadruple of compact subsets of a symplectic manifold $(M,\omega)$ satisfying
an intersection condition
$$X_0 \cap X_1 = Y_0 \cap Y_1 =\emptyset\;.$$
One of the equivalent definitions of $pb_4$ is as follows, see
\cite[Proposition 1.3]{BEP}:
$$pb_4(X_0,X_1,Y_0,Y_1)= \inf ||\{f,g\}||\;,
$$
where the infimum is taken over all pairs of functions $f,g: M \to [0;1]$ with
$f=0$ near $X_0$, $f=1$ near $X_1$, $g=0$ near $Y_0$ and $g=1$ near $Y_1$.

\medskip
\noindent \begin{exam}\label{exam-pb4-quadrilateral}{\rm
Consider the unit sphere $S^2$ equipped with the standard area form.
Let $\Pi \subset S^2$ be a quadrilateral whose edges (in the cyclic order) are denoted by  $u_1,v_1,u_2,v_2$. By \cite[Theorem 1.20]{BEP}
\begin{equation}\label{eq-pb4-quadrilateral}
pb_4(u_1,u_2,v_1,v_2) = \max\Big{(}1/\text{Area}(\Pi),1/(\text{Area}(S^2)- \text{Area}(\Pi)\Big{)}\;.
\end{equation}}\end{exam}

\medskip
\noindent The proof of \eqref{eq-pb4-quadrilateral} presented in \cite{BEP} is elementary.
Let us mention, however,  that proving positivity of $pb_4$ in specific examples on symplectic manifolds of dimension $\geq 4$ involves ``hard" symplectic techniques such as theory of symplectic quasi-states
(cf. Section \ref{subsec-spectral} above) and/or the Donaldson-Fukaya category (see \cite{BEP}). Interestingly enough, both tools have strong links to physics: the former is related to a discussion on non-contextual hidden variables in quantum mechanics, while the latter comes from mirror symmetry.

In what follows we shall need the following obvious monotonicity property of $pb_4$ (see \cite[Section 2]{BEP}:
Assume that subsets $X'_0,X'_1,Y'_0,Y'_1$ are contained in  $X_0,X_1,Y_0,Y_1$ respectively.
Then
\begin{equation}\label{eq-pb4-monotone}
pb_4(X'_0,X'_1,Y'_0,Y'_1) \leq pb_4(X_0,X_1,Y_0,Y_1)\;.
\end{equation}
Furthermore, $pb_4(X_0,X_1,Y_0,Y_1)$ is invariant under permutations $X_0$ with $X_1$, $Y_0$ with $Y_1$
and of the pairs $(X_0,X_1)$ and $(Y_0,Y_1)$.

\medskip
\noindent For a subset $U \subset M$ denote by $U^c:= M \setminus U$ its set-theoretic complement.
Let $\cU= \{U_1,...,U_L\}$ be an open cover
of a topological space $M$. Given a subset $I \subset \Omega_L=\{1,...,L\}$, put
\begin{equation}\label{eq-UI}
U(I):=\Big{(}\bigcup_{\alpha \in I} U_\alpha\;\Big{)^c}\;,
\end{equation}
and
\begin{equation}\label{eq-lambdaI}
\Lambda(\cU,I)= \bigcup_{\alpha \in I, \beta \in I^c} U_\alpha \cap U_\beta\;.
\end{equation}
The subset $U(I)$ is closed and $\Lambda(\cU,I)$ is open.
The manifold $M$ is decomposed into the union of three disjoint subsets
\begin{equation}\label{eq-decomposition}
M = U(I) \sqcup \Lambda(\cU,I) \sqcup U(I^c)\;.
\end{equation}
We call $\Lambda(\cU,I)$ {\it an overlap layer} of the cover $U$.

\medskip
\noindent
The main result of the present section is as follows.

\medskip
\noindent
\begin{thm}\label{thm-overlap-layer-pb4}
Let $\cU=\{U_1,...,U_L\}$ and $\cV= \{V_1,...,V_N\}$ be two finite
open covers of $M$. Then for every $I \subset \Omega_L$ and $J \subset \Omega_N$
\begin{equation}\label{eq-mu-pb4-overlap}
pb(\cU,\cV) \geq 4pb_4(U(I),U(I^c),V(J),V(J^c))\;.
\end{equation}
\end{thm}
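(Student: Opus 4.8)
The plan is to reduce \eqref{eq-mu-pb4-overlap} directly to the definitions of $pb(\cU,\cV)$ and of $pb_4$: every partition of unity subordinated to $\cU$ (resp.\ $\cV$) manufactures, after a linear change, an admissible competitor for $pb_4$ relative to the pair $U(I),U(I^c)$ (resp.\ $V(J),V(J^c)$).

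First I would check that the quadruple $\big(U(I),U(I^c),V(J),V(J^c)\big)$ satisfies the intersection condition needed even to speak of $pb_4$: since $\cU$ is a cover,
\[
U(I)\cap U(I^c)=\Big(\bigcup_{\alpha\in I}U_\alpha\Big)^{c}\cap\Big(\bigcup_{\alpha\in I^c}U_\alpha\Big)^{c}=\Big(\bigcup_{\alpha}U_\alpha\Big)^{c}=\emptyset,
\]
and likewise $V(J)\cap V(J^c)=\emptyset$. Next, for arbitrary partitions of unity $\f=\{f_i\}$ and $\g=\{g_j\}$ subordinated to $\cU$ and $\cV$, set $f_I:=\sum_{\alpha\in I}f_\alpha$ and $g_J:=\sum_{\beta\in J}g_\beta$. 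I would argue that $f_I\colon M\to[0,1]$ vanishes \emph{near} $U(I)$ and equals $1$ \emph{near} $U(I^c)$ (and symmetrically for $g_J$). The only point needing care is upgrading "on" to "near": since each $\supp f_\alpha$ is closed and $I$ is finite, $S:=\bigcup_{\alpha\in I}\supp f_\alpha$ is a closed set contained in the open set $\bigcup_{\alpha\in I}U_\alpha$, so $U(I)$ has the open neighbourhood $M\setminus S$ on which $f_I\equiv 0$; dually $U(I^c)$ has an open neighbourhood on which $\sum_{\beta\in I^c}f_\beta\equiv 0$, hence on which $f_I=\sum_\alpha f_\alpha-\sum_{\beta\in I^c}f_\beta=1$. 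Thus $f_I,g_J$ are admissible in the definition of $pb_4\big(U(I),U(I^c),V(J),V(J^c)\big)$, so $\|\{f_I,g_J\}\|\geq pb_4\big(U(I),U(I^c),V(J),V(J^c)\big)$.

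To recover the factor $4$ I would not feed $f_I,g_J$ into $\nu_c(\f,\g)$ directly, but use the extreme weights $x_\alpha=1$ for $\alpha\in I$ and $x_\alpha=-1$ for $\alpha\in I^c$ (all in $[-1,1]$), and similarly $y$ for $\g$. Then $\sum_\alpha x_\alpha f_\alpha=f_I-(1-f_I)=2f_I-1$ and $\sum_\beta y_\beta g_\beta=2g_J-1$, so, the Poisson bracket being insensitive to additive constants,
\[
\nu_c(\f,\g)\ \geq\ \|\{2f_I-1,\,2g_J-1\}\|\ =\ 4\,\|\{f_I,g_J\}\|\ \geq\ 4\,pb_4\big(U(I),U(I^c),V(J),V(J^c)\big).
\]
Hence $\max\big(\nu_c(\f),\nu_c(\g),\nu_c(\f,\g)\big)\geq 4\,pb_4\big(U(I),U(I^c),V(J),V(J^c)\big)$ for every admissible $\f,\g$, and taking the infimum gives \eqref{eq-mu-pb4-overlap}.

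I do not expect a genuine obstacle here: the argument is essentially bookkeeping of the definitions. The two things that deserve a moment's attention are the closedness observation (a finite union of supports is closed) that turns "$f_I=0$ on $U(I)$" into "$f_I=0$ near $U(I)$", as $pb_4$ requires, and the remark that letting the weights range over the full interval $[-1,1]$ replaces $f_I$ by $2f_I-1$ and hence multiplies the bracket by $4=2\cdot2$ — this is exactly what accounts for the constant $4$ in the statement.
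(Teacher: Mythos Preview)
Your proof is correct and follows essentially the same approach as the paper. The paper factors the argument through an intermediate proposition (Proposition~\ref{thm-pb4-1}) identifying $pb$ of a pair of two-set covers with $4\,pb_4$, and then reduces the general case to it by forming the two-set covers $\{W_1,W_2\}$, $\{Z_1,Z_2\}$ with $W_1=\bigcup_{\alpha\in I}U_\alpha$, etc.; you simply inline that proposition, producing the admissible competitor $(f_I,g_J)$ for $pb_4$ directly and reading off the factor $4$ via the $\pm 1$ weights --- the same computation the paper carries out in Step~1 of Proposition~\ref{thm-pb4-1}.
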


\medskip
\noindent In view of decomposition \eqref{eq-decomposition}, the value
of $pb_4(U(I),U(I^c),V(J),V(J^c))$ depends only on the relative position of the overlap layers
$\Lambda(\cU,I)$ and $\Lambda(\cV,J)$. Note also, that since $pb(\cU,\cU)=pb(\cU)$ by
\eqref{eq-pb-single-joint}, the theorem is applicable to a single cover. It implies that for all $I,J$
\begin{equation}\label{eq-mu-pb4-overlap-single}
pb(\cU) \geq 4pb_4(U(I),U(I^c),U(J),U(J^c))\;.
\end{equation}

\subsection{Two-sets covers}
As an illustration, consider a pair of open covers consisting
of exactly two sets, $\cU=\{U_1,U_2\}$ and $\cV=\{V_1,V_2\}$. The corresponding partitions
of unity necessarily have the form $f,1-f$ and $g,1-g$.
It turns out that in this case the Poisson bracket invariant $pb$ coincides (up to a multiple)
with the invariant $pb_4$ introduced in Section \ref{subsec-pb4} above. Recall that $U^c$ stands
for the complement $M \setminus U$ of a subset $U \subset M$.

\medskip
\noindent
\begin{prop}\label{thm-pb4-1}
Let $\cU=\{U_1,U_2\}$ and $\cV=\{V_1,V_2\}$ be two open covers of a closed
symplectic manifold $(M,\omega)$.
Then
\begin{equation}\label{eq-pb-simple}
pb(\cU,\cV) = 4pb_4(U_1^c,U_2^c,V_1^c,V_2^c)\;.
\end{equation}
\end{prop}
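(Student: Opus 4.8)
The plan is to establish the two inequalities $pb(\cU,\cV)\le 4pb_4(U_1^c,U_2^c,V_1^c,V_2^c)$ and $pb(\cU,\cV)\ge 4pb_4(U_1^c,U_2^c,V_1^c,V_2^c)$ separately, exploiting the fact that a partition of unity subordinated to a two-set cover $\{U_1,U_2\}$ is completely determined by a single function. Indeed, if $\{f_1,f_2\}$ is such a partition of unity then $f_2=1-f_1$, and $\supp f_1\subset U_1$, $\supp f_2\subset U_2$ means precisely $f_1=0$ near $U_1^c$ and $f_1=1$ near $U_2^c$. So writing $f=f_1$ and $g=g_1$, the admissible pairs $(f,g)$ for the $pb_4$-infimum on the right (after rescaling to $[0,1]$) are exactly the pairs coming from partitions of unity subordinated to $\cU$ and $\cV$. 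This is the dictionary that drives both directions.

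For the direction $pb(\cU,\cV)\le 4pb_4$: given a partition of unity $\{f,1-f\}$ and $\{g,1-g\}$, I compute $\nu_c(\f)$, $\nu_c(\g)$, $\nu_c(\f,\g)$ explicitly. For $x=(x_1,x_2)\in K_2$ we have $x_1f_1+x_2f_2 = (x_1-x_2)f + x_2$, so $\{\sum x_if_i,\sum y_jf_j\} = (x_1-x_2)(y_1-y_2)\{f,f\}=0$; hence $\nu_c(\f)=0$ and likewise $\nu_c(\g)=0$. For the mixed term, $\{\sum x_if_i,\sum y_jg_j\} = (x_1-x_2)(y_1-y_2)\{f,g\}$, and since $|x_1-x_2|\le 2$, $|y_1-y_2|\le 2$, we get $\nu_c(\f,\g)=4\|\{f,g\}\|$. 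Therefore $\max(\nu_c(\f),\nu_c(\g),\nu_c(\f,\g)) = 4\|\{f,g\}\|$. Taking the infimum over all such partitions of unity, and using the dictionary above to identify these with the admissible pairs in the definition of $pb_4(U_1^c,U_2^c,V_1^c,V_2^c)$, gives $pb(\cU,\cV)\le 4pb_4(U_1^c,U_2^c,V_1^c,V_2^c)$.

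For the reverse direction I run the same computation backwards: any pair $(f,g)$ admissible for $pb_4(U_1^c,U_2^c,V_1^c,V_2^c)$ — i.e. $f,g:M\to[0,1]$ with $f=0$ near $U_1^c$, $f=1$ near $U_2^c$, $g=0$ near $V_1^c$, $g=1$ near $V_2^c$ — yields a partition of unity $\{f,1-f\}$ subordinated to $\cU$ and $\{g,1-g\}$ subordinated to $\cV$, and by the identity above the corresponding $\max(\nu_c(\f),\nu_c(\g),\nu_c(\f,\g))$ equals $4\|\{f,g\}\|$. Hence $pb(\cU,\cV)\le 4\|\{f,g\}\|$ for every such pair, so $pb(\cU,\cV)\le 4pb_4(U_1^c,U_2^c,V_1^c,V_2^c)$ — wait, this is the same inequality; the point is that because the dictionary is a genuine bijection between the two infimum sets and the objective functions match exactly under it, the two infima are literally equal, giving \eqref{eq-pb-simple}. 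The only mild technical point to check is that the "supp $\subset U_i$" condition in the definition of a subordinated partition of unity matches the "$=0$ near $X$, $=1$ near $X'$" open-neighborhood condition in the definition of $pb_4$; since $\supp f_1\subset U_1$ is equivalent to $f_1\equiv 0$ on an open neighborhood of $U_1^c$ and $\supp(1-f_1)\subset U_2$ to $f_1\equiv 1$ near $U_2^c$, these conditions do coincide, so there is no obstacle here. The main (and only) content is the elementary bracket identity $\{\sum x_if_i,\sum y_jg_j\}=(x_1-x_2)(y_1-y_2)\{f,g\}$ together with the sharp bound $\max_{|x_1-x_2|\le 2,|y_1-y_2|\le 2}|(x_1-x_2)(y_1-y_2)|=4$ attained at $x=(1,-1)$, $y=(1,-1)$.
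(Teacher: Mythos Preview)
Your proposal is correct and follows essentially the same approach as the paper: both establish the identity $\nu_c(\f,\g)=4\|\{f,g\}\|$ (with $\nu_c(\f)=\nu_c(\g)=0$) via the bracket computation $\{x_1f+x_2(1-f),\,y_1g+y_2(1-g)\}=(x_1-x_2)(y_1-y_2)\{f,g\}$, and then verify that the support condition $\supp f\subset U_1$, $\supp(1-f)\subset U_2$ is equivalent to the ``$f=0$ near $U_1^c$, $f=1$ near $U_2^c$'' condition in the definition of $pb_4$. The paper presents this as two separate inequalities while you (after a brief detour) phrase it as a bijection of the two infimum problems with matching objective functions; the content is the same.
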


\medskip
\noindent\begin{proof}

\medskip
\noindent
1) Consider two partitions of unity $f,1-f$ and $g,1-g$ denoted
by $S_f$ and $S_g$ respectively. Observe that they are Poisson commutative and hence
 $\nu_c(S_f)=\nu_c(S_g)=0$. We claim that
\begin{equation}\label{eq-simplepb}
\nu_c(S_f,S_g) =  4||\{f,g\}||\;.
\end{equation}
Indeed,
$$\{x_1f+x_2(1-f),y_1g+y_2(1-g)\}= (x_1-x_2)(y_1-y_2) \{f,g\}\;.$$
Maximizing over $x_i,y_j \in [-1,1]$, we get \eqref{eq-simplepb}.

\medskip
\noindent
2)Let $\cU =\{U_1,U_2\}$ and $\cV=\{V_1,V_2\}$ be two open covers of $M$.
 Consider any two partitions of unity $f,1-f$ and $g,1-g$
subordinated to $\cU$ and $\cV$ respectively. Denote them
by $S_f$ and $S_g$. Since $\text{supp}(f) \subset U_1$,
$$U_1^c \subset W:= M \setminus \text{supp}(f)\;.$$
Note that $f$ vanishes on $W$ and hence $f$ vanishes near $U_1^c$.
Similarly, $f=1$ near $U_2^c$, $g=0$ near $V_1^c$ and
$g=1$ near $V_2^c$. Therefore by \eqref{eq-simplepb}
\begin{equation}\label{eq-upper}
pb(\cU,\cV) \geq 4p\;,
\end{equation}
where
$ p:= pb_4(U_1^c,U_2^c,V_1^c,V_2^c)$.

\medskip
\noindent
3) Consider any pair of functions $f,g: M \to [0;1]$ so that
$f=0$ near $U_1^c$, $f=1$ near $U_2^c$, $g=0$ near $V_1^c$ and
$g=1$ near $V_2^c$. In particular, $f$ vanishes on some open set $X$ containing $U_1^c$.
Therefore the open subset $\{f \neq 0\}$ is contained in a closed subset $X^c$. Thus
$$\text{supp}(f) \subset X^c \subset U_1\;.$$ Applying the same argument to the functions
$1-f$,$g$ and $1-g$ we get that $f,1-f$ and $g,1-g$ are partitions of unity
subordinated to covers $\cU$ and $\cV$ respectively. By \eqref{eq-simplepb}, we have that
$4p \geq pb(\cU,\cV)$. Together with \eqref{eq-upper} this yields $pb(\cU,\cV)=4p$,
as required.
\end{proof}

\subsection{ Proof of Theorem \ref{thm-overlap-layer-pb4}}
Let $\cU= \{U_1,...,U_L\}$ and $\cV=\{V_1,...,V_N\}$ be open covers
of $M$. Fix non-empty subsets $I \subset \Omega_L=\{1,...,L\}$ and $J \subset \Omega_N$.
We shall assume that their complements $I^c$ and $J^c$ are non-empty as well.
Take arbitrary partitions of unity
$\f=\{f_1,...,f_L\}$ and $\g=\{g_1,...,g_N\}$ subordinated to $\cU$ and $\cV$ respectively.
Put $\phi:= \sum_{\alpha \in I} f_{\alpha}$ and $\psi := \sum_{\alpha \in J} g_{\alpha}$.
Then $1 -\phi =  \sum_{\beta \in I^c} f_{\beta}$ and
$1-\psi =  \sum_{\beta \in J^c} g_{\beta}$.

Observe that
$\phi=0$ near $U(I)$, $1-\phi=0$ near $U(I^c)$, $\psi=0$ near $V(J)$ and $1-\psi=0$ near
$V(J^c)$. By definitions of $\nu_c$ and $pb_4$,
$$\nu_c(\f) \geq ||\{ \phi -(1-\phi),\psi-(1-\psi)\}|| =4||\{\phi,\psi\}||$$ $$\geq 4pb_4(U(I),U(I^c),V(J),V(J^c))\;.$$
Since this holds for every partition of unity $\f$ subordinated to $\cU$,
we get \eqref{eq-mu-pb4-overlap}.
\qed

\section{Back to quantum mechanics}\label{sec-backto-quantum}

\subsection{Phase space discretization: a noise-localization uncertainty relation}\label{subsec-noise-local}

Let $(M,\omega)$ be a quantizable closed symplectic manifold. Let $\cU$ be
either a $(d,p)$-regular cover, or a degree $\leq d$ Liouville cover. In the latter case assume
 that $\pi_2(M)=0$. Write $\cA$ for the magnitude of localization of $\cU$. Combining Theorem \ref{thm-noisebound}(i) with \eqref{eq-mag-loc}
we get the following lower bound on the noise indicator of $\cU$:
\begin{equation} \label{eq-mu-fine-displ}
\mu(\cU) \geq C\cA^{-1}\;,
\end{equation}
where the constant $C$ depends only on $d$ and $p$ (in the case of Liouville covers, only on $d$).

This inequality can be spelled out as follows.
Fix a scheme of the Berezin-Toeplitz quantization
$T_m: C^{\infty}(M) \to \cL(H_m)$. For any partition of unity $\f= \{f_j\}$ of $M$ subordinated
to $\cU$ consider the POVM $A^{(m)} = \{T_m(f_j)\}$ corresponding to the quantum registration problem associated to $\cU$. Recalling the definition of the noise indicator and the equality $\hbar=1/m$
we get the following {\it noise-localization uncertainty relation} stated in Theorem \ref{thm-intro-1}:
\begin{equation}\label{noise-localization-single}
\cN_{in}(A^{(m)}) \cdot \cA \geq C \hbar\;\;\; \forall m \geq m_0\;,
\end{equation}
where the number $m_0$ depends on the Berezin-Toeplitz quantization $T$ and the partition of unity $\{f_j\}$.
In other words, a sufficiently fine phase-space localization of the system yields inherent quantum noise.

\subsection{Overlap induced noise}\label{subsec-overlap-concusion}
Let $\cU=\{U_1,...,U_L\}$ and $\cV=\{V_1,...,V_N\}$ be two finite open covers
of a closed quantizable symplectic manifold $(M,\omega)$. Consider POVMs $A^{(m)}=\{T_m(f_i)\}$ and $B^{(m)}= \{T_m(g_j)\}$, where $\f= \{f_i\}$ and $\g= \{g_j\}$ are partitions of unity on $M$, associated
to a pair of registration procedures. Let $C^{(m)}$ be any joint observable of $A^{(m)}$ and $B^{(m)}$
which exists by Proposition \ref{prop-joint-BT}. Combining Theorems  \ref{thm-noisebound}(ii) and
Theorem \ref{thm-overlap-layer-pb4} we get that for all
$I \subset \Omega_L$ and $J \subset \Omega_N$
$$\cN_{in}(C^{(m)}) \geq 2pb_4(U(I),U(I^c),V(J),V(J^c)) \cdot \hbar $$
for all sufficiently large quantum numbers $m$. A similar statement holds true in the case of
a single cover, see Theorem \ref{thm-intro-2} of the introduction which
is an immediate consequence of
Theorem \ref{thm-noisebound}(i), equality  \eqref{eq-pb-single-joint} and Theorem \ref{thm-overlap-layer-pb4}.

Interestingly enough, in certain situations geometry of overlaps provides better bounds for the inherent noise than the ones
coming from the phase space localization. Let us illustrate this in the context of joint measurements
of POVMs associated to a pair of registration procedures. Let $(M,\omega)$ be a quantizable
closed symplectic manifold. Consider a pair of open covers consisting
of exactly two sets, $\cU=\{U_1,U_2\}$ and $\cV=\{V_1,V_2\}$. The corresponding partitions
of unity necessarily have the form $f,1-f$ and $g,1-g$. The Berezin-Toeplitz quantization takes
these partitions into commutative POVMs
$T_m(f), \id-T_m(f)$ and $T_m(g), \id -T_m(g)$ on the two-point space $\Omega_2$.
Each of them is a {\it simple} observable in the terminology of \cite{Busch-2}.
We refer to \cite{Busch-2} for various results on joint measurements of a pair of simple observables.
Applying inequality \eqref{eq-mu-pb4-overlap} with $I=J=\{1\} \subset \Omega_2$, we get that
for the two-set covers
\begin{equation}\label{eq-mu-simple}
\mu(\cU,\cV) \geq 2pb_4(U_1^c,U_2^c,V_1^c,V_2^c)\;.
\end{equation}

\medskip
\noindent \begin{exam}\label{exam-pb4-quadrilateral-1}{\rm
Consider the unit sphere $S^2$ equipped with the standard area form. Consider
a pair of two-set covers $\cU$ and $\cV$ such that $U_1,U_2,V_1,V_2$ are topological discs
with smooth boundaries and the overlap layers $\Lambda (U):= U_1 \cap U_2$ and
$\Lambda (V):= V_1 \cap V_2$ are annuli with the boundaries
$$\partial \Lambda (U) = \partial U_1 \sqcup \partial U_2,\;\;\partial \Lambda (V) = \partial V_1 \sqcup \partial V_2\;.$$
Assume that the intersection $\Lambda (U) \cap \Lambda (V)$ contains a  quadrilateral $\Pi$ whose
edges (in cyclic order) $u_1,v_1,u_2,v_2$ lie on $R_1:= \partial U_1, S_1:= \partial V_1, R_2:= \partial U_2,
S_2:= \partial V_2$ respectively. This is illustrated on Fig. \ref{fig-2}, where the disc $U_1$ lies
to the right of the circle $R_1$, $U_2$ to the left of $R_2$, $V_1$ above $S_1$ and $V_2$ below $S_2$.
The overlap layers $U_1 \cap U_2$ and $V_1 \cap V_2$ are gray while the quadrilateral $\Pi$ is black.
\begin{figure}[tbp]
\centering
\epsfig{file=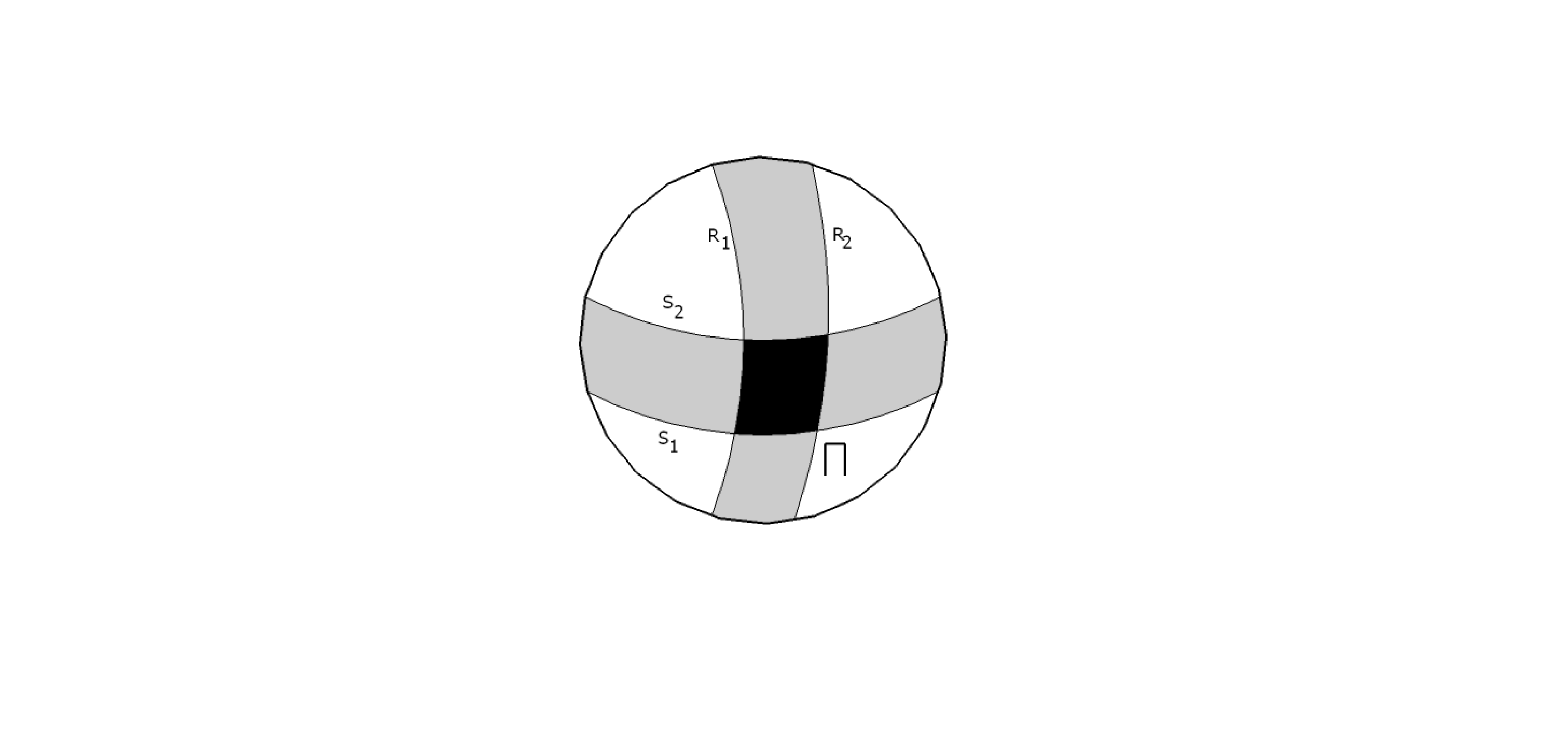,width=1.1\linewidth,clip=}
\caption{A pair of covers of the sphere by two discs.}
\label{fig-2}
\end{figure}
Observe that by \eqref{eq-pb4-monotone} and \eqref{eq-pb4-quadrilateral}
$$pb_4(U_1^c,U_2^c,V_1^c,V_2^c) \geq pb_4(u_1,u_2,v_1,v_2) \geq \frac{1}{\text{Area}(\Pi)}\;,$$
and hence by \eqref{eq-mu-simple}
\begin{equation}\label{eq-twoannuli}
\mu(\cU,\cV) \geq \frac{2}{\text{Area}(\Pi)}\;.
\end{equation}
Another route which could lead to the lower bound on the noise indicator $\mu(\cU,\cV)$ is as follows:
Recall that by Proposition \ref{pb-intersection} $pb(\cU,\cV) \geq pb(\cU \cdot \cV)/4$,
and hence
\begin{equation} \label{eq-ucdotv}
\mu(\cU,\cV) \geq \frac{1}{8}\cdot pb(\cU \cdot \cV)\;.
\end{equation}
For instance, in the situation sketched on Figure \ref{fig-2} the cover $\cU \cdot \cV$ consists of four discs. Denote by $\cA$ the maximal area of these discs which can be interpreted as the magnitude of localization.
If $\cA < \frac{1}{2}\cdot \text{Area}(S^2)$, these discs are displaceable and hence Theorem \ref{thm-main-fine covers}(i) together with \eqref{eq-ucdotv} would yield
$$\mu(\cU,\cV) \geq C \cdot \cA^{-1}\;,$$ where $C$ is a numerical constant.
This bound on the noise indicator is worse than \eqref{eq-twoannuli} when $\text{Area}(\Pi) \ll \cA$,
or, in other words, when the ``characteristic scale" appearing in the relative geometry of the overlap layers
$U_1 \cap U_2$ and $V_1 \cap V_2$ is much smaller then the magnitude of localization.
}\end{exam}

\subsection{Quantum registration and approximate measurements}\label{sec-appendix}
We start this section with some preliminaries on approximate measurements.
Denote by  $\cS$ the set of (not necessarily pure) quantum states $\rho \in \cL(H)$, $\rho \geq 0$, $\text{trace}(\rho)=1$. As usually, a pure state $[\xi] \in \mathbb{P} (H)$ is identified with the
projector $P \in \cS$ to the line $[\xi]$.  For a POVM $B$ on $\R$ and a state $\rho \in \cS$ we denote by $\rho_B$
the corresponding probability measure on $\R$: $$\rho_B(X) = \text{trace}(\rho\cdot B(X))\;,$$
for every Borel subset $X \subset \R$.

Let $A$ be a POVM on $\R$ and $E$ be a projector valued measure on $\R$. Roughly speaking,
$A$ is an approximation to $E$ if for every state $\rho \in \cS$ the probability distribution
$\rho_A$ is ``close" to the probability distribution $\rho_E$. The importance of this notion
is due to the fact that in various interesting situations, pairs of projector valued measures
become jointly measurable only after a suitable approximation (see \cite{Busch-3,BHL2}).

There exist several mathematical ways to formalize the notion of an approximate measurement. In what follows
we stick to the one proposed in \cite{Busch-3} which is based on {\it error bar widths}. Here is the precise
definition (which is slightly simplified in comparison to \cite{Busch-3} since we are dealing
with POVMs on $\R$ supported in a finite set of points, cf. \cite{Miyadera-2}):
Fix $\epsilon \in (0;1)$. Given $x \in \R$ and $\delta>0$ we write $J_{x,\delta}$ for the segment $[x-\delta/2,x+\delta/2]$.
Denote by $\theta$ the infimum of the set of all $w>0$ with the following property:
 for every $x \in \R$ and $\rho \in \cS$
$$\rho_E(x)=1 \Rightarrow \rho_A(J_{x,w}) \geq 1-\epsilon\;.$$
We shall say that {\it $A$ is an $\epsilon$-approximation to $E$ with the error bar width $\theta$.}

 Let us revisit Example \ref{exam-one-func} above: Take a smooth function $F: M \to \R$. Consider the closed interval $I:= [\min F,\max F]$. Let $W_1,...,W_N$ be a cover of $I$ by $N$ open intervals. The sets
$U_k:= \{F^{-1}(W_k)\}$ form an open cover $\cU$ of $M$. Let $\f=\{f_1,...,f_N\}$ be any partition of unity
subordinated to $\cU$.   Applying the Berezin-Toeplitz quantization, we get a sequence of $\cL(H_m)$-valued POVMs $A^{(m)}= \{A^{(m)}_k\}_{k=1,...,N}$  on $\Omega_N$,
 where $A^{(m)}_k =  T_m (f_k)$.

 Assume that the system is prepared in a state $\rho \in \cS$. The quantity
 $$p_k := \text{trace}(\rho \cdot T_m(f_k))$$
 can be interpreted as the probability of the event that the system is registered in the set $F^{-1}(W_k)$
 or, in other words, that the value of the observable $F$ lies in the interval $W_k$.

 In fact, we shall show now that {\it POVM $A^{(m)}$ provides an approximate measurement of the quantum observable $E^{(m)}:= T_m(F)$ corresponding to $F$.} To make this statement precise, we transform $E^{(m)}$ and $A^{(m)}$ into POVMs on $\R$ as follows. In the case of $E^{(m)}$ we use the standard procedure from \cite{Busch}: Let $E^{(m)} = \sum \lambda_j P_j$ be the spectral decomposition of
$E^{(m)}$. Put $\widehat{E}^{(m)}:= \sum P_j \delta_{\lambda_j}$ and consider it as an $\cL(H_m)$-valued POVM on $\R$ representing the von Neumann observable $E^{(m)}$. In case of $A^{(m)}$ we choose in an arbitrary way points $x_k \in W_k$ and put $\widehat{A}^{(m)} = \sum A^{(m)}_k \delta_{x_k}$.

\medskip
\noindent
\begin{thm}\label{thm-approx} Suppose that the length of each interval $W_k$, $k=1,...,N$ is less than $c$.
 Then there exists a sequence of positive numbers $\epsilon_m \to 0$ as $m \to \infty$ so that the POVM $\widehat{A}^{(m)}$ is an $\epsilon_m$-approximation to the sharp observable $\widehat{E}^{(m)}$ with the error bar width $\theta$ which satisfies
 \begin{equation}\label{eq-theta-twosided}
 \frac{\max F -\min F}{4N} \leq \theta \leq 4c
 \end{equation}
 for all sufficiently large $m$.
 \end{thm}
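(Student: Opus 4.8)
The plan is to prove the upper bound $\theta\le 2c$ and the lower bound $\theta\ge (\max F-\min F)/(8N)$ separately, together with an explicit choice of $\epsilon_m\to 0$; both halves are elementary, the only substantive ingredient being an asymptotic density statement for the spectrum of $T_m(F)$. Throughout write $L=\max F-\min F$; the case $L=0$ is trivial, so assume $L>0$. Note first that the two bounds are compatible: $N$ open intervals of length $<c$ covering $I$ force $L<Nc$, hence $L/(8N)<c/8<2c$.

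\emph{The upper bound and the choice of $\epsilon_m$.} Set $\delta:=c-\max_k|W_k|>0$, so that $|W_k|\le c-\delta$ for every $k$. Fix an eigenvalue $x$ of $T_m(F)$ and a state $\rho$ supported on the corresponding eigenspace; then $\text{trace}(\rho\,T_m(F))=x$ and $\text{trace}(\rho\,T_m(F)^2)=x^2$, so by (BT5), linearity and (BT1) one gets $\text{trace}(\rho\,T_m((F-x)^2))=O(1/m)$, uniformly in $x$ since by (BT3) the relevant $x$ lie in a fixed bounded interval. I would then expand $(F-x)^2=\sum_k (f_k\circ F)(F-x)^2$ into non‑negative pieces, drop the indices $k$ with $x_k\in J_{x,2c}$ (keeping only non‑negative operators), and observe that for $x_k\notin J_{x,2c}$ one has $|F-x|\ge|x_k-x|-|F-x_k|>c-(c-\delta)=\delta$ on $\text{supp}(f_k\circ F)$, so $(f_k\circ F)(F-x)^2\ge\delta^2 (f_k\circ F)$ and hence, by monotonicity of $T_m$, $T_m((f_k\circ F)(F-x)^2)\ge\delta^2 A^{(m)}_k$. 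Summing,
\[
\delta^2\!\!\sum_{k:\,x_k\notin J_{x,2c}}\!\!\text{trace}(\rho\,A^{(m)}_k)\ \le\ \text{trace}(\rho\,T_m((F-x)^2))\ \le\ \frac{D}{m}
\]
for a constant $D$ independent of $x$ and $m$. Thus $\rho_{\widehat{A}^{(m)}}(J_{x,2c})\ge 1-D/(m\delta^2)$, and putting $\epsilon_m:=D/(m\delta^2)\to 0$ shows that $w=2c$ already witnesses the error bar width at level $\epsilon_m$, i.e.\ $\theta\le 2c$.

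\emph{Density of the spectrum.} The key lemma is: for every $\eta>0$ there is $m_1$ with $\text{spec}(T_m(F))$ $\eta$‑dense in $[\min F,\max F]$ for all $m\ge m_1$. To prove it I would fix $\mu$ in the interior of the range of $F$, set $r=\eta/3$, choose a smooth $u\colon\R\to[0,1]$ supported in $(\mu-2r,\mu+2r)$ and equal to $1$ near $\mu$, and put $h=u\circ F$, so $0\le h\le 1$ and $\|h\|=1$ (the value $1$ being attained because $\mu$ is an interior value). Let $\xi$ be a unit eigenvector of $T_m(h)\ge 0$ with eigenvalue $\beta=\|T_m(h)\|$, so $1-\beta=O(1/m)$ by (BT3). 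Using (BT1), (BT5), linearity and monotonicity,
\[
\|(T_m(F)-\mu\,\id)\xi\|^2=\langle T_m((F-\mu)^2)\xi,\xi\rangle+O(1/m)\le 4r^2\beta+\|(F-\mu)^2\|(1-\beta)+O(1/m),
\]
where I split $(F-\mu)^2=h(F-\mu)^2+(1-h)(F-\mu)^2$, bound the first summand by $4r^2 h$ and the second by $\|(F-\mu)^2\|(1-h)$, and use $\langle T_m(1-h)\xi,\xi\rangle=1-\beta$. Hence $\|(T_m(F)-\mu\,\id)\xi\|<3r=\eta$ for large $m$, and since $\text{dist}(\mu,\text{spec}(T_m(F)))\le\|(T_m(F)-\mu\,\id)\xi\|$ for the self‑adjoint operator $T_m(F)$, the point $\mu$ lies within $\eta$ of the spectrum. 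A compactness argument over finitely many $\mu$'s makes $m_1$ uniform.

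\emph{The lower bound.} With the lemma in hand this is pigeonhole. The $N$ intervals $(x_k-L/(8N),\,x_k+L/(8N))$ have total length at most $L/4$, so there is $x^\ast\in[\min F,\max F]$ with $\text{dist}(x^\ast,\{x_1,\dots,x_N\})\ge L/(8N)$; applying the lemma with $\eta=L/(16N)$ gives, for $m$ large, some $x\in\text{spec}(T_m(F))$ with $|x-x^\ast|\le L/(16N)$ and therefore $\text{dist}(x,\{x_1,\dots,x_N\})\ge L/(16N)$. Taking $\rho$ to be the rank‑one projector onto an eigenvector of $T_m(F)$ for this $x$, we have $\rho_{\widehat{E}^{(m)}}(x)=1$, while for every $w<L/(8N)$ (so $w/2<L/(16N)$) no $x_k$ lies in $J_{x,w}$, whence $\rho_{\widehat{A}^{(m)}}(J_{x,w})=0<1-\epsilon_m$ once $\epsilon_m<1$. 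Thus $w$ fails the defining property of the error bar width for every $w<L/(8N)$, giving $\theta\ge L/(8N)$ for all sufficiently large $m$. The main obstacle is the spectral density lemma — showing the spectrum of $T_m(F)$ asymptotically fills $[\min F,\max F]$; the two remaining arguments are routine positivity-and-counting estimates, the only care needed being that the $O(1/m)$ terms from (BT3) and (BT5) are uniform over the relevant bounded families of functions, exactly as in the discussion preceding Definition~\ref{def-noise-indicator-single}.
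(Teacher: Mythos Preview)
Your proof is correct, and it takes a genuinely different route from the paper's. The paper's main technical device is a functional-calculus lemma (its Step~1): $\|T_m(h\circ F)-h(T_m(F))\|_{op}\to 0$ for every continuous $h$, proved by Weierstrass approximation and (BT5). Both halves of the paper's argument then go through this lemma: the upper bound by comparing $\widehat A^{(m)}(J_{x,4c})$ with an indicator function of $T_m(F)$ via a smooth cut-off, and the spectral density (Step~6) by feeding a bump function into the lemma. You bypass this lemma entirely. For the upper bound you run a Chebyshev--variance estimate directly from (BT5), and for spectral density you build a quasi-eigenvector of $T_m(F)$ from a top eigenvector of $T_m(h)$. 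The pigeonhole step for the lower bound is essentially the same in both proofs.

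What your approach buys: it is more elementary (no polynomial approximation), and it gives an explicit rate $\epsilon_m=O(1/m)$, whereas the paper's Weierstrass step yields only $\epsilon_m\to 0$ with no rate. What the paper's approach buys: the functional-calculus lemma is a reusable general statement. Two minor points worth tightening in your write-up: in the pigeonhole step make explicit that the uncovered set has measure $\ge 3L/4$, so $x^\ast$ can be chosen in the open interval $(\min F,\max F)$ where your density lemma applies; and note that $\delta>0$ uses finiteness of the cover together with the strict inequality $|W_k|<c$.
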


\medskip
\noindent
\begin{rem}{\rm  Let $F,G$ be a pair of smooth functions on $M$. Look at the corresponding operators
$T_m(F)$ and $T_m(G)$ considered as (von Neumann) quantum observables. In general, these two observables
are not jointly measurable. Theorem \ref{thm-approx} combined with Proposition \ref{prop-joint-BT}
above show that {\it they become jointly measurable after a suitable approximation}. The precision
of the approximation tends to zero in the classical limit, while the error bar width remains bounded.
}\end{rem}

\begin{proof}[Proof of Theorem \ref{thm-approx}]

\medskip
\noindent
{\sc Step 1:} First of all we claim that for every continuous function $h$ on $\R$
\begin{equation}\label{eq-BTnew}
|| T_m (h \circ F) - h(T_m(F))||_{op} \to 0\;,\;\;\text{as}\;\;\;m \to \infty\;.
\end{equation}
Indeed, \eqref{eq-BTnew} holds for polynomials by property (BT5) of the Berezin-Toeplitz
quantization. Approximating $h$ on $[-||F||-1,||F||+1]$ in the uniform norm by a polynomial
and applying (BT3), we get \eqref{eq-BTnew}.

\medskip
\noindent {\sc Step 2:} Note that $J_{x,c} \cap W_j = \emptyset$ provided
$x_j \notin J_{x,4c}$. For such a $j$ the function $f_j$ vanishes on the
set $F^{-1}(J_{x,c})$ since it is supported in $F^{-1}(W_j)$. Therefore
\begin{equation}\label{eq-approx-vsp-1}
\sum_{j: x_j \in J_{x,4c}} f_j(y)=1\;\;\;\forall y \in F^{-1}(J_{x,c})\;.
\end{equation}

\medskip
\noindent
{\sc Step 3:} Define a non-negative smooth cut off function $\psi: \R \to [0,1]$ so that
$\psi(s) =1$ when $|s| \leq c/8$ and $\psi(s) =0$ when $|s|\geq c/4$. For $x \in \R$
put $\psi_x(s) = \psi(x-s)$. We claim that
\begin{equation}\label{eq-approx-vsp-2}
\sum_{j: x_j \in J_{x,4c}} f_j \geq \psi_x \circ F\;.
\end{equation}
Indeed, on $F^{-1}(J_{x,c})$ the inequality holds by \eqref{eq-approx-vsp-1}, and  outside
$F^{-1}(J_{x,c})$ the inequality holds since $\psi_x \circ F$ vanishes on this set.

\medskip
\noindent
{\sc Step 4:} For an interval $J \subset \R$ denote by $I_J$ its indicator function.
Observe that
$$\widehat{A}^{(m)}(J_{x,4c}) = \sum_{j: x_j \in J_{x,4c}} T_m(f_j) = T_m\Big{(}\sum_{j: x_j \in J_{x,4c}}f_j\Big{)}\;.$$
By \eqref{eq-approx-vsp-2},
$$ T_m\Big{(}\sum_{j: x_j \in J_{x,4c}}f_j\Big{)} \geq T_m(\psi_x \circ F)\;.$$
By \eqref{eq-BTnew}
$$
T_m(\psi_x \circ F) = \psi_x(T_m(F)) + o(1) \geq I_{J_{x,c/8}} (T_m(F)) + o(1)\;.
$$
It follows that there exists a sequence of positive numbers $\epsilon_m \to 0$ so that
\begin{equation}\label{eq-approx-vsp-3}
\widehat{A}^{(m)}(J_{x,4c}) \geq I_{J_{x,c/8}} (T_m(F))-\epsilon_m \cdot \id\;.
\end{equation}

\medskip
\noindent
{\sc Step 5:} Recall that we denoted $E^{(m)}=T_m(F)$ and  $\widehat{E}^{(m)}:= \sum P_j \delta_{\lambda_j}$,
where $E^{(m)} = \sum \lambda_j P_j$ is the spectral decomposition of
$E^{(m)}$. Assume that $\rho_{\widehat{E}^{(m)}}(x)=1$ for some state $\rho \in S$ and a point $x \in \R$.
This means that
$$\text{trace}( \rho \cdot \sum_{j: \lambda_j = x} P_j)=1\;.$$
Since $\text{trace}(\rho \cdot P_i) \geq 0$ for all $i$ and
$$ \text{trace}(\rho \cdot \sum_i P_i) = \text{trace}(\rho)=1\;,$$
it follows that
$$\text{trace} (\rho \cdot  I_{J_{x,c/8}} (T_m(F)) =1\;.$$
By \eqref{eq-approx-vsp-3} we see that
$$\text{trace} (\rho \cdot \widehat{A}^{(m)}(J_{x,4c})) \geq 1-\epsilon_m\;.$$
By definition, this means that $ \widehat{A}^{(m)}$ is an $\epsilon_m$-approximation
of $ \widehat{E}^{(m)}$ with the error bar width
\begin{equation}\label{eq-theta-twosided-1}
\theta \leq 4c\;.
\end{equation}

\medskip
\noindent
{\sc Step 6:} Let $J \subset (\min F,\max F)$ be a closed interval.
We claim that $T_m(F)$ has an eigenvalue in $J$ for all sufficiently large
$m$. Indeed, let $J=J_{x,w}$.  Define a non-negative cut off function $\phi: \R \to [0,1]$ so that
$\phi(s) =1$ when $|s-x| \leq w/8$ and $\phi(s) =0$ when $|s|\geq w/4$. Assume on the contrary that $T_m(F)$ has no eigenvalues in $J$. Then $\phi(T_m(F))=0$. By Step 1,
$\phi(T_m(F))=T_m(\phi(F))+o(1)$. By property (BT3) of the Berezin-Toeplitz quantization,
$||T_m(\phi(F))||_{op}= 1 + o(1)$. This contradiction proves the claim.

\medskip
\noindent
{\sc Step 7:} Put $\tau=\max F-\min F$.
There exists an interval $$J_{y,w} \subset (\min F,\max F)$$ with $w = \tau/4N$ which does not contain any of the points $x_j$, $j=1,...,N$. By Step 6, $\widehat{E}^{(m)}=T_m(F)$ has an eigenvalue, say, $x$ in $J_{y,w}$ for all sufficiently large $m$. Let $\rho \subset \cS$ be
the projector to one of the corresponding eigenvectors. Then $\rho_{\widehat{E}^{(m)}}(x)=1$,
while $\rho_{\widehat{A}^{(m)}}(J_{x,w})=0$. Thus the error bar width $\theta$ is $\geq w= \tau/4N$.
Together with \eqref{eq-theta-twosided-1}, this completes the proof.
\end{proof}

\subsection{Approximate joint measurements for two components of spin}\label{sec-discussion}
Let us apply the results above to an approximate joint measurement of two components of spin.
In the classical limit, the phase space of the quantum spin system is the two-dimensional sphere
$S^2= \{q_1^2 +q_2^2 +q_3^2 =1\} \subset \R^3$ equipped with the standard area form.

In what follows, we write $c_1,c_2,...$ for positive numerical
constants.
The length of an interval $W \subset \R$ is denoted by $|W|$. Consider an open cover $W_1,...,W_N$ of $[-1;1]$ so that all $W_i$ are connected intervals of the length $ \leq c_1N^{-1}$,
$-1 \in W_1$, $1 \in W_N$ and $ W_i \cap W_j = \emptyset$ for $|i-j| \geq 2$.
Put $U_i= \{q \in S^2\;:\; q_1 \in W_i\}$ and $V_j = \{q \in S^2\;:\; q_2 \in W_j\}$,
and consider the covers $\cU = \{U_i\}$ and $\cV=\{V_j\}$ of the sphere.

\medskip
\noindent
\begin{thm}\label{thm-noise-bound-spin}
\begin{equation}\label{eq-mu-spin}
\mu(\cU,\cV) \geq c_2 N^2\;
\end{equation}
for all $N \geq N_0$, where $N_0$ and $c_2$ depend only on $c_1$.
\end{thm}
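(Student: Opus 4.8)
The plan is to deduce the bound from a lower estimate on the $pb_4$-invariant of a suitable quadruple of subsets of $S^2$, evaluated on an explicit tiny quadrilateral --- this is exactly the mechanism of Example \ref{exam-pb4-quadrilateral-1}, except that here the two ``annuli'' will be the thin overlap layers of the covers $\cU$ and $\cV$. Combining Theorem \ref{thm-noisebound}(ii) with Theorem \ref{thm-overlap-layer-pb4}, for any non-empty proper subsets $I,J \subset \Omega_N$ one has
$$ \mu(\cU,\cV) \;\geq\; \tfrac{1}{2}\,pb(\cU,\cV) \;\geq\; 2\,pb_4\big(U(I),U(I^c),V(J),V(J^c)\big), $$
so it is enough to choose $I,J$ for which the last quantity is $\geq c_2 N^2$.

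First I fix the combinatorics. We may list the intervals so that $W_i$ lies to the left of $W_{i+1}$, and since $[-1,1]$ is connected, $W_i \cap W_{i+1} \neq \emptyset$ for each $i$; writing $W_i = (a_i,b_i)$ this gives $a_i < a_{i+1} < b_i < b_{i+1}$, while $W_i \cap W_j = \emptyset$ for $|i-j| \geq 2$. Choose the index $k$ with $0 \in W_k$; then $[a_{k+1},b_k] \subset \overline{W_k}$, which has length $\leq c_1/N$ and contains $0$, so $a_{k+1},b_k \in [-c_1/N,c_1/N]$ and $b_k - a_{k+1} \leq c_1/N$; moreover for $N > c_1$ necessarily $1 \leq k \leq N-1$ (if $k = N$ then $W_N \ni 0,1$, forcing $|W_N| \geq 1 > c_1/N$). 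Put $I = J = \{1,\dots,k\}$. Using $W_i \cap W_j = \emptyset$ for $|i-j| \geq 2$ one computes directly
$$ U(I) = \{q_1 \geq b_k\},\quad U(I^c) = \{q_1 \leq a_{k+1}\},\quad V(J) = \{q_2 \geq b_k\},\quad V(J^c) = \{q_2 \leq a_{k+1}\}, $$
and the overlap layers $\Lambda(\cU,I) = U_k \cap U_{k+1} = \{a_{k+1} < q_1 < b_k\}$, $\Lambda(\cV,J) = \{a_{k+1} < q_2 < b_k\}$ are two thin bands, of width $\leq c_1/N$, encircling the great circles $\{q_1 = 0\}$ and $\{q_2 = 0\}$.

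These two great circles meet at $(0,0,\pm 1)$, and near $(0,0,1)$ the two bands overlap in a small curvilinear square. In the chart $(q_1,q_2) \mapsto \big(q_1,q_2,\sqrt{1-q_1^2-q_2^2}\big)$, which is defined on $[a_{k+1},b_k]^2 \subset (-c_1/N,c_1/N)^2$ once $N$ is large, let $\Pi$ be the image of the square $[a_{k+1},b_k] \times [a_{k+1},b_k]$. Then $\Pi$ is a quadrilateral whose edges, in cyclic order, are arcs of $\{q_1 = b_k\}$, $\{q_2 = b_k\}$, $\{q_1 = a_{k+1}\}$, $\{q_2 = a_{k+1}\}$, hence contained in $U(I)$, $V(J)$, $U(I^c)$, $V(J^c)$ respectively (each of these four closed sets contains the level circle bounding it). Since the spherical area form is bounded by $C_0\, dq_1 \wedge dq_2$ on $\{q_1^2 + q_2^2 \leq 3/4\}$, with $C_0$ depending only on the normalization of the form, $\mathrm{Area}(\Pi) \leq C_0 (b_k - a_{k+1})^2 \leq C_0 (c_1/N)^2$. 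Now the explicit formula \eqref{eq-pb4-quadrilateral} together with the monotonicity \eqref{eq-pb4-monotone} gives
$$ pb_4\big(U(I),U(I^c),V(J),V(J^c)\big) \;\geq\; pb_4(\mathrm{edges\ of}\ \Pi) \;=\; \max\!\Big(\tfrac{1}{\mathrm{Area}(\Pi)},\, \tfrac{1}{\mathrm{Area}(S^2) - \mathrm{Area}(\Pi)}\Big) \;\geq\; \frac{N^2}{C_0 c_1^2}, $$
whence $\mu(\cU,\cV) \geq 2N^2/(C_0 c_1^2)$, which proves the claim with $c_2 = 2/(C_0 c_1^2)$ and $N_0 = \lceil c_1 \rceil + 1$ (the latter also guaranteeing the chart above is valid).

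The steps requiring care are purely bookkeeping: verifying --- crucially from the hypothesis $W_i \cap W_j = \emptyset$ for $|i-j| \geq 2$, combined with connectedness of $[-1,1]$ forcing consecutive intervals to overlap --- that $\Lambda(\cU,I)$ and $\Lambda(\cV,J)$ are single thin bands around great circles of $S^2$ with exactly one ``corner component'' near each of $(0,0,\pm 1)$; and checking that the small curvilinear square $\Pi$ is an honest quadrilateral with its four edges correctly distributed among the four sets, so that \eqref{eq-pb4-monotone} is applied in the right direction. No further hard input is needed: the symplectic rigidity is already packaged into Example \ref{exam-pb4-quadrilateral} and into Theorems \ref{thm-noisebound} and \ref{thm-overlap-layer-pb4}.
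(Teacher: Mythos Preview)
Your proof is correct and follows essentially the same route as the paper's: choose $I=J=\{1,\dots,k\}$ with $0\in W_k$, identify the overlap layers as thin spherical annuli meeting in a small quadrilateral $\Pi$ near $(0,0,1)$, bound $\mathrm{Area}(\Pi)\leq cN^{-2}$, and invoke the explicit $pb_4$ formula for quadrilaterals together with monotonicity. The paper packages the last step by citing \eqref{eq-twoannuli} from Example~\ref{exam-pb4-quadrilateral-1}, whereas you cite Theorems~\ref{thm-noisebound}(ii) and~\ref{thm-overlap-layer-pb4} directly, but the argument is the same; one minor point is that your stated $N_0=\lceil c_1\rceil+1$ is a touch too small to guarantee $[a_{k+1},b_k]^2\subset\{q_1^2+q_2^2\leq 3/4\}$, so take $N_0$ a constant multiple of $c_1$ instead.
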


\begin{proof} Choose minimal $k$ such that $0 \in W_k$. Consider the subset
$I = \{1,...,k\}$ of $\Omega_N = \{1,...,N\}$. The corresponding overlap layers
(see \eqref{eq-lambdaI} above) are given by
$\Lambda(\cU, I) = \{q_1 \in W_k \cap W_{k+1}\}$
and
$\Lambda(\cV, I) = \{q_2 \in W_k \cap W_{k+1}\}$.
They form a pair of spherical annuli intersecting along two quadrilaterals.
Pick one of these quadrilaterals, say $\Pi$.
In view of our choice of $k$ we can assume that for sufficiently large $N$ the quadrilateral $\Pi$ lies in the domain $\{q_3 \geq 1/2\}$.
In this domain, the spherical area form is given by $a(q_1,q_2)dq_1 \wedge dq_2$ with
$a \leq c_3$. Therefore
$$\text{Area}(\Pi) \leq c_3\cdot |W_k \cap W_{k+1}|^2 \leq c_4N^{-2}\;,$$
and thus \eqref{eq-mu-spin} follows from \eqref{eq-twoannuli}.
\end{proof}

\medskip
\noindent This result deserves to be spelled out in more detail.
Let $\{f_i\}$ and $\{g_j\}$ be any partitions of unity subordinated to the covers $\cU$ and $\cV$ respectively. Their quantum counterparts are given by POVMs $\{T_m(f_i)\}$ and $\{T_m(g_j)\}$.  By Theorem \ref{thm-approx} the
POVMs $\{T_m(f_i)\}$ and $\{T_m(g_j)\}$ $\epsilon_m$-approximate the von Neumann observables $T_m(q_1)$ and $T_m(q_2)$ representing the first and the second components of spin respectively with the error bar width
\begin{equation}\label{eq-thetaup}
1/(2N) \leq \theta \leq 4c_1/N\;.
\end{equation}
Here $\epsilon_m \to 0$ in the classical limit $m \to \infty$.
These POVMs are jointly measurable by Proposition \ref{prop-joint-BT}. Let $C^{(m)}= \{C^{(m)}_{ij}\}$, $i,j=1,...,N$ be such a joint observable. If the system is prepared in a pure state $[\xi] \in \mathbb{P} (H_m)$, the probability of the simultaneous registration of $q_1$ in the interval $W_i$ and of $q_2$ in the interval $W_j$ equals $\langle C^{(m)}_{ij}\xi,\xi\rangle$.  Recall that the Planck constant $\hbar = 1/m$. As an immediate consequence of Theorem \ref{thm-noise-bound-spin} and \eqref{eq-thetaup}, we obtain
that
\begin{equation}\label{eq-width-3}
\cN_{in}(C^{(m)})\cdot \theta^2 \geq c_5 \cdot \hbar \;
\end{equation}
for sufficiently large quantum numbers $m = 1/\hbar$.

It is instructive to compare inequality \eqref{eq-width-3} with the universal uncertainty relation for the error bar widths of approximate joint measurements. This relation has been established first
for joint measurements of position and momentum observables in \cite{Busch-3} and later on extended
to general pairs of observables in \cite{Miyadera-2}. For two components of spin the uncertainty relation
should read
\begin{equation}\label{eq-theta2}
\theta^2 \geq c_6 \hbar\;
\end{equation}
for sufficiently large $m$. This inequality could be extracted from Theorem 2(ii) of \cite{Miyadera-2} combined with a calculation from \cite{SR} and the fact that the precision of the approximation $\epsilon_m$ goes
to zero (see Theorem \ref{thm-approx} above). In our context, since $\cN_{in} \leq 1$, inequality
\eqref{eq-width-3} refines the uncertainty principle for error bar widths \eqref{eq-theta2}
for sufficiently large quantum numbers $m$.

\section{Further directions}\label{sec-further-directions}

\subsection{Does regularity of covers matter?}
Let $(M,\omega)$ be a closed symplectic manifold, and let $\cU=\{U_j\}$ be a finite open cover of $M$
by displaceable subsets with the displacement energy $E(U_j) \leq \cA$.

\medskip
\noindent
\begin{question}\label{quest-displ} Is it true that $pb(\cU) \geq C\cdot \cA^{-1}$,
where the constant $C$ depends only on the symplectic manifold $(M,\omega)$?
\end{question}

\medskip
\noindent The positive answer would enable us to get lower bounds on the inherent noise
of a quantum registration procedure without assuming $(d,p)$-regularity of the corresponding cover.
 A naive intuition suggests that the Poisson bracket invariant $pb$ should be bigger for ``irregular" covers, however, at the moment, we have neither a proof nor a counterexample.

\subsection{Noise-localization uncertainty on wave-length scale}\label{subsec-spec}
Let $(M,\omega)$ be the classical phase space (a closed symplectic manifold).
Consider a $(d,p)$-regular cover of $M$ of magnitude of localization $\cA$. In what follows, $d$ and $p$ are fixed, while $\cA \searrow 0$
will play the role of the small parameter.
Fix a scheme of the Berezin-Toeplitz quantization $T_m: C^{\infty}(M) \to \cL(H_m)$.
For any partition of unity $\f= \{f_i\}$ of $M$ subordinated to $\cU$
consider the POVM $A^{(m)} = \{T_m(f_j)\}$ corresponding to the quantum registration problem associated
to $\cU$. The noise-localization uncertainty relation \eqref{noise-localization-single}
reads
\begin{equation}\label{noise-localization-single-1}
\cN_{in}(A^{(m)}) \cdot \cA \geq C(d,p)\hbar\;\;\; \forall m \geq m_0\;,
\end{equation}
for all $m \geq m_0$.
Let us emphasize that the limits $\cA \to 0$ and $m \to \infty$ do not commute: we first fix $\cA$, and then
choose a sufficiently large $m_0$ which depends on various data including $\cA$.

Assume for a moment that inequality \eqref{noise-localization-single-1} remains valid when $\cA$ is allowed to depend on $m$. Since $\cN_{in} \leq 1$, this yields $\cA \geq C\hbar$.
\medskip
\noindent \begin{question}\label{question-uncert} Does the noise-localization uncertainty
relation \eqref{noise-localization-single-1} remain valid in
the asymptotic regime when $\cA$ is as small as possible, that is $\cA \sim \hbar$?
\end{question}

\noindent

  In such a regime, the dimensionless inherent noise $\cN_{in}(A^{(m)})$ would be bounded away from zero by a positive numerical constant. Observe that if $\cA \sim \hbar =1/m$, the functions $f_i$  entering the partition of unity become dependent on $m$ as well. Therefore a rigorous justification of the above consideration would inevitably require a version of the correspondence principle for the Toeplitz operators of the form $T_m(f^{(m)})$, where the functions $f^{(m)}$  depend on the quantum number $m$, perhaps in a controlled way (say, they obey certain $m$-dependent derivative bounds.)\footnote{I thank Victor Guillemin and Iosif Polterovich for useful discussions on this issue.} Such a version of the correspondence principle for the Berezin-Toeplitz quantization seems to be unavailable at the moment. It would be interesting to develop it by methods of pseudo-differential calculus of Toeplitz operators \cite{BG}.

\subsection{Approximate joint measurements in higher dimensions}
In Section \ref{sec-discussion} we have studied approximate joint measurements of two components
of spin. We started with a pair of open covers $\cU$ and $\cV$ associated to a discretization of
these components with mesh $\sim 1/N$ and showed the the noise indicator $\mu(\cU,\cV)$ satisfies
\begin{equation}\label{eq-mu-twospins-1}
\mu(\cU,\cV) \geq \text{const}\cdot N^2.
\end{equation}
The main tool used in the proof was an elementary lower
bound for the $pb_4$-invariant on surfaces (see Section \ref{subsec-pb4} above) established in \cite{BEP}.
The technique developed in \cite{BEP}, which involves modern symplectic methods,
enables one to extend these results to higher-dimensional phase spaces.

A meaningful non-trivial example is given by a system of two spins whose phase space is the product $M=S^2 \times S^2$. A point of $M$ is a pair $(q,r)$ where $q,r$ are unit vectors in $\R^3$. The total spin of the composed system equals $q+r$. It follows from \cite{BEP} that inequality \eqref{eq-mu-twospins-1} holds
for approximate joint measurements of two components  $q_1$ and $q_2$ of the spin of the first system.
It would be interesting to extend it to approximate joint measurements of two components $q_1+q_2$ and
$r_1 + r_2$ of the total spin. Even though the geometry of this problem is more involved, it sounds likely that such an extension can be obtained by methods of symplectic field theory following the lines of Section 6 of \cite{BEP}.

\subsection{Localization vs. overlaps?} In the present paper we discussed two methods of detecting inherent noise, phase space localization and geometry of overlaps. The former looks as a more universal one,
while the latter seems to be more efficient in situations where it is applicable at all. For instance,
existence of overlap induced noise is unclear for greedy coverings of higher-dimensional manifolds. At the same time, both methods yield certain lower bounds for the noise indicator in the case of approximate joint measurements of two components of spin, and in this situation geometry of overlaps yields
better results.

Is there a general recipe enabling one to choose between these two methods in specific
examples? The answer to this question is unclear to us, both from the symplectic and the physical viewpoints.
In fact, it would be interesting to understand a physical meaning of the overlap induced noise. For instance,
does it admit an interpretation in terms of some uncertainty relation?

\medskip
\noindent{\bf Acknowledgement:} I am indebted to Paul Busch  for pointing out \cite{Busch-private}  that
the results of my recent paper \cite{P} should extend to the case of joint measurements and for
supplying me with a number of helpful bibliographical references. The present paper is a (somewhat over-sized)
reply to Busch's comment. I thank Strom Borman, Victor Guillemin, Iosif Polterovich, Daniel Rosen
and Sobhan Seyfaddini for useful discussions and comments on the first draft of the paper.

The first draft of this paper had been written during my stay at University of Chicago.
I thank the Department of Mathematics at University of Chicago for the warm hospitality
and an exceptional research atmosphere.

\begin{tabular}{l}
School of Mathematical Sciences\\
Tel Aviv University\\
Tel Aviv 69978, Israel\\
polterov@runbox.com\\
\end{tabular}

\end{document}